\DeclareMathOperator{\lcm}{lcm}
\DeclareMathOperator{\res}{res}
\newtheorem{thm}{Theorem}[section]
\newtheorem{cor}[thm]{Corollary}
\newtheorem{prop}[thm]{Proposition}
\newtheorem{lem}[thm]{Lemma}
\theoremstyle{definition}
\newtheorem{defn}[thm]{Definition}
\theoremstyle{remark}
\newtheorem{rem}[thm]{Remark}
\theoremstyle{definition}
\theoremstyle{definition}
\newtheorem{ex}[thm]{Example}
\theoremstyle{definition}
\numberwithin{equation}{section}
\title{Isoresidual fibration and resonance arrangements}
\author{Quentin Gendron}
\address[Quentin Gendron]{Centro de Ciencas Matematicas - UNAM Campus Morelia
Antigua Carretera a Pátzcuaro, 8701
Col. Ex Hacienda San José de la Huerta
Morelia, Michoacán, México
C.P. 58089}
\curraddr{Centro de Investigacion en Matematicas, Guanjuato, Gto.,  AP 402, CP 36000, México}
\email{quentin.gendron@cimat.mx}
\author{Guillaume Tahar}
\address[Guillaume Tahar]{Faculty of Mathematics and Computer Science, Weizmann Institute of Science,
Rehovot, 7610001, Israel}
\email{tahar.guillaume@weizmann.ac.il}
\date{February 25, 2022}
\keywords{Isoresidual fibration, Translation surfaces, Meromorphic $1$-forms, Resonance arrangements}
\begin{document}

\begin{abstract}
The stratum $\mathcal{H}(a,-b_{1},\dots,-b_{p})$ of meromorphic $1$-forms with a zero of order $a$ and poles of orders $b_{1},\dots,b_{p}$ on the Riemann sphere has a map, the isoresidual fibration, defined by assigning to any differential its residues at the poles. We show that above the complement of a hyperplane arrangement, the resonance arrangement, the isoresidual fibration is an unramified cover of degree $\frac{a!}{(a+2-p)!}$. Moreover, the monodromy of the fibration is computed for strata with at most three poles and a system of generators and relations is given for all strata.
These results are obtained by associating to special differentials of the strata a tree, and by studying the relationship between the geometric properties of the differentials and the combinatorial properties of these trees.
\end{abstract}
\maketitle
\setcounter{tocdepth}{1}
\tableofcontents

\section{Introduction}

For any $g\geq 0$ and any partition $(a_{1},\dots,a_{n},-b_{1},\dots,-b_{p})$ of $2g-2$, we denote by $\mathcal{H}(a_{1},\dots,a_{n},-b_{1},\dots,-b_{p})$ the {\em stratum} of  meromorphic abelian differentials $1$-forms whose zeroes are of order $a_{i}$ and poles are of order~$b_{j}$. For any stratum with  $p\geq1$, we define the \textit{residual space} $\mathcal{R}_{p}$ to be the complex vector space formed by the vectors $(\lambda_{1},\dots,\lambda_{p})$ such that $\sum_{j=1}^{p} \lambda_{j} = 0$ and the \textit{residual map}
\[\res\colon \mathcal{H}(a_{1},\dots,a_{n},-b_{1},\dots,-b_{p}) \to \mathcal{R}_{p} : \omega \mapsto (\lambda_{1},\dots,\lambda_{p})\]
that assigns to each differential $\omega$ of a given stratum of meromorphic $1$-forms the sequence of its residues $\lambda_{i}$ at its poles~$p_{i}$. This map defines the \textit{isoresidual fibration} of the stratum. \newline

The residual map is still highly mysterious for $p\geq2$: only its image is known by results of \cite{GT}. In the case of the Riemann sphere  $\mathbb{CP}^{1}$,  any meromorphic $1$-form $\omega$ can be  written $\omega = \frac{P(z)}{Q(z)}dz$ where $P$ and $Q$ are relatively prime polynomials in~$z$. Hence, computing the residue of this $1$-form amounts to compute the value of a complicated determinant in terms of the locations of the zeroes and the poles. \newline

The strata of genus zero are the product of $\mathbb{C}^{\ast}$ with the configuration space of a finite set of points on the sphere. Since projective automorphisms allow to locate three points in $0$, $1$ and $\infty$, the complex dimension of a stratum is $n+p+1-3=n+p-2$. Fixing the residue of the differentials at each pole defines $p-1$ independent such equations in the stratum. Hence the residual map is finite if and only if $n=1$.\newline

In this paper, we provide a study of the isoresidual fibration for the strata of differential $1$-forms on the Riemann sphere with a unique zero. This case is especially interesting since the fibers of the fibration are  discrete and the isoresidual fibration is a branched cover of the stratum to the residual space.\newline

\paragraph{\bf Main Results.}

For any partition $(a,-b_{1},\dots,-b_{p})$ of $-2$ with $p\geq2$, we consider the stratum $\mathcal{H}=\mathcal{H}(a,-b_{1},\dots,-b_{p})$ of meromorphic $1$-forms. In the whole article the poles are labeled. The residual space $\mathcal{R}_{p}$ is a complex vector space of dimension~$p-1$.

\begin{defn}\label{def:resonanceintro}
Let $I$ be a nonempty proper subset of $\{1,\dots,p\}$, the \textit{resonance hyperplane} $A_{I}$ is the space defined by the equation $\sum_{j \in I} \lambda_{j} = 0$  in the residual space $\mathcal{R}_{p}$. \newline
The union of all the resonance hyperplanes is the \textit{resonance arrangement} $\mathcal{A}_{p} \subset \mathcal{R}_{p}$.
\end{defn}

The main theorem of this paper, proved in Section~\ref{sec:nonresonance}, is that the isoresidual cover is unramified over the complement of the resonance arrangement and the computation of its degree.

\begin{thm}\label{thm:numerofibre}
For every stratum of meromorphic $1$-forms $\mathcal{H}(a,-b_{1},\dots,-b_{p})$ on $\mathbb{CP}^{1}$, the isoresidual fibration is a non ramified cover of degree $\frac{a!}{(a+2-p)!}$ over $\mathcal{R}_{p} \setminus \mathcal{A}_{p}$.
\end{thm}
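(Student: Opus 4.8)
The statement has three ingredients: $\res$ is a local biholomorphism on all of $\mathcal{H}=\mathcal{H}(a,-b_{1},\dots,-b_{p})$; it is proper over $\mathcal{R}_{p}\setminus\mathcal{A}_{p}$; and the resulting covering has the stated degree. For the first point I would identify $\res$, up to the canonical isomorphism $\mathcal{R}_{p}\cong\mathbb{C}^{p-1}$ and the factor $2\pi i$, with the period map of the stratum: the relative homology $H_{1}(\mathbb{CP}^{1}\setminus\{\text{poles}\},\{\text{zero}\})$ is free of rank $p-1$ and is generated by small loops around any $p-1$ of the poles, over which the period of $\omega$ is $2\pi i$ times the corresponding residue (the $p$-th residue being minus the sum of the others). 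Since the period map on a stratum of meromorphic differentials is a local biholomorphism and $\dim_{\mathbb{C}}\mathcal{H}=p-1=\dim_{\mathbb{C}}\mathcal{R}_{p}$, the map $\res$ is étale; in particular its fibres are discrete and reduced and no ramification occurs anywhere in $\mathcal{H}$, so only properness can fail over a given locus.

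\noindent\textbf{Properness over $\mathcal{R}_{p}\setminus\mathcal{A}_{p}$.} I would show that $\res^{-1}(K)$ is compact for every compact $K\subset\mathcal{R}_{p}\setminus\mathcal{A}_{p}$. After normalising with the $\mathrm{PGL}_{2}$-action, a failure of compactness would force either the leading coefficient of the differential to tend to $0$ or $\infty$ — then all residues tend to $0$ or $\infty$, impossible on $K$ — or a collision of some of the marked points. In the latter case the limit is a differential on a stable rational curve whose dual graph is a tree with at least two vertices; choose a leaf vertex whose component $Y$ does not carry the unique zero, attached to the rest at a single node $q$. Then $\omega|_{Y}$ has no zeros away from $q$, and stability forces at least two marked poles, indexed by some $I$, to lie on $Y$; since the orders on $Y$ sum to $-2$, the order of $\omega|_{Y}$ at $q$ equals $(\sum_{j\in I}b_{j})-2\geq 0$, so $q$ is a zero or a regular point of $\omega|_{Y}$, hence of residue zero. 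The global residue theorem on $Y$ then gives $\sum_{j\in I}\lambda_{j}=0$, putting the limiting residue vector on the resonance hyperplane $A_{I}$, contrary to $K\subset\mathcal{R}_{p}\setminus\mathcal{A}_{p}$. Thus $\res$ is proper over $\mathcal{R}_{p}\setminus\mathcal{A}_{p}$, and this argument makes the resonance arrangement appear as exactly the obstruction to properness. (For $p=2$ the stratum is one-dimensional, $\res$ becomes linear in the normalised leading coefficient, and the degree is directly $1=\frac{a!}{a!}$.)

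\noindent\textbf{Degree.} A proper local homeomorphism onto a manifold is a finite covering map, so $\res$ restricts to an unramified cover over $\mathcal{R}_{p}\setminus\mathcal{A}_{p}$; since the complement of a complex hyperplane arrangement in $\mathbb{C}^{p-1}$ is connected, this cover has a well-defined degree $d$, which I would compute from a single fibre by means of the combinatorial device of the paper. Over a conveniently chosen residue vector — for instance one whose coordinates are real and in general position — each differential in the fibre, read as a flat surface, decomposes canonically along its horizontal separatrices into elementary pieces glued according to a tree; one then shows that this assignment is a bijection between the fibre and an explicitly described family of decorated trees, and counts the latter to obtain $\frac{a!}{(a+2-p)!}$. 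Alternatively one can degenerate the residues to generic points of the codimension-one walls and set up a recursion expressing $d$ for $\mathcal{H}(a,-b_{1},\dots,-b_{p})$ in terms of the degrees of strata with fewer poles, though this requires tracking the smoothing multiplicities of the nodes.

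\noindent\textbf{Main obstacle.} The étaleness is essentially formal, and the properness, while demanding some care with the compactified moduli space, is conceptually dictated by the global residue theorem — it is this that singles out the resonance arrangement. The real work is the degree count: constructing the dictionary between the geometry of the differentials in a fibre and the combinatorics of the associated trees, and proving it to be a bijection onto the correct combinatorial family (equivalently, pinning down the node-smoothing multiplicities in the inductive version). This is exactly where the tree formalism announced in the abstract is needed.
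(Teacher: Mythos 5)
Your framework (\'etale $+$ proper $\Rightarrow$ covering, then compute the degree on one fibre) is a legitimate and in some ways cleaner packaging of the covering statement than the paper's own route, which instead establishes that the residues are local coordinates, reduces an arbitrary fibre to a real one by the contraction flow (Proposition~\ref{prop:realgen}), and proves constancy of the fibre cardinality across chambers (Theorem~\ref{thm:dependarrangement}). Your properness argument via stable limits and the residue theorem is also the right mechanism for why the resonance arrangement is exactly the obstruction (it mirrors Lemma~\ref{lem:corWYSIWYG} and the degeneration discussion of Section~\ref{sec:degentree}), although as written it glosses over the fact that the naive limit differential may vanish identically or fail to converge on some components unless one works with a suitable compactification (twisted/multi-scale differentials) -- this can be repaired but is not automatic.

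The genuine gap is the degree. The number $\frac{a!}{(a+2-p)!}$ is the substantive content of the theorem, and your proposal does not prove it: you write that ``one then shows that this assignment is a bijection between the fibre and an explicitly described family of decorated trees, and counts the latter,'' and you yourself flag this as the ``real work.'' Nothing in the proposal constructs the bijection (the paper's Lemma~\ref{lem:arbresdiff}, which requires the explicit reconstruction of the flat surface from a tree), nor carries out the enumeration, which in the paper rests on two nontrivial combinatorial steps: the induction obtained by adding a simple pole, $d(b_{1},\dots,b_{p},1)=(a+1)\,d(b_{1},\dots,b_{p})$ (Proposition~\ref{prop:recurencedegre}, where the factor $a+1$ comes from counting the admissible corners, i.e.\ half of the $2a+2$ corners of the tree), and the weight-transfer argument showing $d(b_{1},\dots,b_{p})$ depends only on $a$ and $p$ (Proposition~\ref{prop:changepoids}), which together with the two-pole base case give the closed formula. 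The alternative you mention (a wall-crossing recursion with node-smoothing multiplicities) is likewise only named, not executed. Without one of these counts the statement of the theorem -- in particular the striking independence of the degree from the individual pole orders $b_{j}$ -- remains unproved.
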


It is worth of noting that the degree does not depends on the orders of every individual poles but only on the number of poles and the sum of the orders. This is very surprising and we show that over the resonance arrangement $\mathcal{A}_{p}$ the number of elements in the fibers depends on the individual orders. Moreover there are strictly less elements in the fibers over the resonance arrangement. This comes from two phenomena, the first one is ramification and the second is that some elements correspond to singular $1$-forms. This is stated precisely in Section~\ref{sec:degentree} and used in Section~\ref{sec:comptereson} to compute the number of elements over some resonance hyperplanes. In particular, the number of fibers lying above exactly one hyperplane is given by the following proposition.
\begin{prop}\label{prop:degintro}
 The number of elements of an isoresidual fiber $\mathcal{F}$ that belongs to exactly one resonance hyperplane $A_{I}$ is $$\frac{a!}{(a+2-p)!}-\frac{d_{I}!\, (a-d_{I})!}{(d_{I}+1-c_{I})! \, (a+1-p+c_{I}-d_{I})!}\,,$$
 where $c_{I}$ is the cardinal of $I$ and $d_{I}=-1+\sum_{j \in I} b_{j}$ is  the \textit{resonance degree} of~$I$.
\end{prop}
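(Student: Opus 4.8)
The plan is to exploit the tree model of generic isoresidual fibers constructed in Section~\ref{sec:nonresonance}. Fix a point $r$ of $A_I$ lying on no other resonance hyperplane, and join it to a non-resonant basepoint $r_0$ by a short arc meeting $\mathcal{A}_p$ only at $r$; by Theorem~\ref{thm:numerofibre}, $\res^{-1}(r_0)$ is identified with the finite set $\mathcal{T}$ of decorated trees built there, of cardinality $D:=\frac{a!}{(a+2-p)!}$. The idea is to follow what each tree does as $r_0\to r$. Two features are worth keeping in mind: $A_I=A_{I^c}$, so the answer must be symmetric under $I\leftrightarrow I^c$ (and indeed $d_{I^c}=a-d_I$, $c_{I^c}=p-c_I$ make the stated expression symmetric); and, by the remark after Theorem~\ref{thm:numerofibre}, $D$ does not see the individual $b_j$, whereas the degeneration along $A_I$ is governed entirely by the aggregate $d_I=-1+\sum_{j\in I}b_j$.

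First I would show, using the degeneration analysis of Section~\ref{sec:degentree}, that each tree $T\in\mathcal{T}$ has a well-defined limit over $r$, of exactly one of two types: an honest differential of $\mathcal{H}(a,-b_1,\dots,-b_p)$ with residue $r$, or a singular $1$-form whose surface splits at a single node. In the second case, since $\sum_{j\in I}\lambda_j=0$ the loop enclosing the poles indexed by $I$ has trivial holonomy, and pinching it produces $X_1\in\mathcal{H}(d_I-1,-b_j\colon j\in I)$ --- carrying those poles with the now non-resonant residues $(\lambda_j)_{j\in I}$, and a zero of order $d_I-1$ at the node --- together with $X_2\in\mathcal{H}(a,-(d_I+1),-b_j\colon j\notin I)$, carrying the remaining poles, the order-$a$ zero, and at the node a pole of order $d_I+1$ whose residue is forced to $0$. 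By properness of the isoresidual fibration, $\mathcal{F}=\res^{-1}(r)$ is precisely the set of honest limits; since $r$ is generic on $A_I$, distinct trees with honest limit give distinct differentials and no ramification occurs, so $|\mathcal{F}|=D-\#\{T\in\mathcal{T}\colon T \text{ degenerates}\}$.

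Then I would count the degenerating trees. A tree degenerates over $r$ exactly when the leaves labelled by $I$ span the collapsing subtree giving $X_1$, and Section~\ref{sec:comptereson} should show that such a $T$ is equivalent to the independent data of: the combinatorial type of $X_1$ together with the prong-matching at the node, a $d_I$-element choice, contributing $\deg\mathcal{H}(d_I-1,-b_j\colon j\in I)\cdot d_I=\frac{(d_I-1)!}{(d_I+1-c_I)!}\cdot d_I=\frac{d_I!}{(d_I+1-c_I)!}$; and the combinatorial type of $X_2$, with the node viewed as one leaf of weight $d_I+1$ and its residue set to $0$ --- which, being a point lying on exactly the resonance hyperplane of the node-pole in the stratum $\mathcal{H}(a,-(d_I+1),-b_j\colon j\notin I)$, contributes the trees of that stratum which in turn degenerate as this residue vanishes. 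By induction on the number of poles (using $A_I=A_{I^c}$ to reduce to $c_I\geq2$, so that the latter stratum has fewer poles; the base case $p=2$ being immediate, with the unique tree degenerating and $D=1$), this second count equals $\frac{(a-d_I)!}{(a+1-p+c_I-d_I)!}$. Multiplying the two gives $\#\{T\colon T\text{ degenerates}\}=\frac{d_I!\,(a-d_I)!}{(d_I+1-c_I)!\,(a+1-p+c_I-d_I)!}$, and subtracting from $D$ yields the asserted value.

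The hard part is everything behind the first two steps --- the content of Sections~\ref{sec:degentree} and~\ref{sec:comptereson}. One must prove that a generic limit over $A_I$ is either honest or exactly the two-component form described (in particular that the order-$a$ zero lands on $X_2$, and that no partial sum smaller than the one defining $A_I$ can force a different splitting); identify the degenerating trees with those whose $I$-leaves span a collapsing subtree; and establish the factorization of their count through $X_1$ and $X_2$ --- the delicate arithmetic input being the node-gluing multiplicity $d_I$, and the fact that the $X_2$-factor counts the \emph{further-degenerating} outer trees rather than all of them. One should also verify the base case $p=2$ by hand and confirm that honest limits at a generic point of $A_I$ remain pairwise distinct, so that the ramification phenomenon only enters over the deeper intersections inside $\mathcal{A}_p$.
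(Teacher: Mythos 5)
Your counting scheme is in substance the paper's own: the fiber over a generic point of $A_{I}$ is obtained from the $\frac{a!}{(a+2-p)!}$ decorated trees of an adjacent chamber by discarding exactly those trees having an edge that induces the partition $I\cup I^{\complement}$, and the discarded set is counted by cutting along that edge and multiplying the two independent counts. The paper does this in Corollary~\ref{cor:elementresonance} via Proposition~\ref{prop:distributiondeg}, applying Theorem~\ref{thm:numerofibre} directly to both halves (each half, with a marked leaf replacing the cut edge, is a decorated tree of a smaller stratum); your induction on the number of poles for the $I^{\complement}$-side, and your factor $d_{I}$ of prong choices on the $I$-side, are an equivalent repackaging of that computation, so the arithmetic is fine. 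Two justifications should be repaired, though neither changes the numbers. First, ``properness of the isoresidual fibration'' is not what identifies $\res^{-1}(r)$ with the set of honest limits: the restriction of $\res$ to the stratum is precisely \emph{not} proper over $A_{I}$, since some families escape to the WYSIWYG boundary; the substitute used in the paper is that the residues form local coordinates, which yields the real-fiber reduction (Proposition~\ref{prop:realgen}, Theorem~\ref{thm:dependarrangement}) and the bijection of Lemma~\ref{lem:arbresdiffdegen} between the resonant fiber and the trees with no separating edge, with no limit-tracking or ramification discussion needed. Second, your description of the singular limit is inaccurate: in the degeneration of Section~\ref{sec:degentree} nothing rescales, the order-$a$ zero splits, and the two WYSIWYG components lie in $\mathcal{H}(d_{I}-1,-b_{j}\colon j\in I)$ and $\mathcal{H}(a-d_{I}-1,-b_{j}\colon j\notin I)$; your $X_{2}\in\mathcal{H}(a,-(d_{I}+1),-b_{j}\colon j\notin I)$ with a node pole of residue zero is incidence-variety-style bookkeeping, not the limit used here. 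Since that description only enters through the count of trees of the auxiliary stratum whose node vertex is a leaf, which coincides with the count of pairs (subtree, attachment corner) of Proposition~\ref{prop:distributiondeg}, the final formula is unaffected.
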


Now, the resonance arrangement enjoys several pleasing properties: it is central, i.e. the origin of the vector space belongs to every hyperplane of the arrangement, and it is the complexification of a real arrangement, i.e. the coefficients of the equations of hyperplanes are real. However, as we show it in Lemma~\ref{lem:passimplice}, if $p \geq 4$, the resonance arrangement is not simplicial (it is not the complexification of a real hyperplane arrangement whose complement is a union of simplices). Hence, the fundamental group $W_{p}$ of $\mathcal{R}_{p} \setminus \mathcal{A}_{p}$ is still unknown in general.\newline

For any stratum $\mathcal{H}$ with $p$ poles, the monodromy of the isoresidual cover can be identified with a subgroup of the (permutation) group of automorphisms of any generic fiber and can be also identified with a quotient of the fundamental group $W_{p}$. For any resonance arrangement $\mathcal{A}_{p}$, there are infinitely many isoresidual covers (of arbitrarily high degree) on which $W_{p}$ acts by monodromy. As such, it provides an indirect way to study $W_{p}$.\newline

In Section~\ref{sec:monodromie}, we study the monodromy group of the isoresidual cover for strata of meromorphic $1$-forms. For every stratum, we describe a system of generators and relations. In the case of three poles, this allows us to obtain the following characterisation.
\begin{thm}\label{thm:monothree}
For any stratum $\mathcal{H}(a,-b_{1},-b_{2},-b_{3})$ of genus zero, the monodromy group of the isoresidual cover is isomorphic to:
\begin{enumerate}[(i)]
 \item the cyclic group $\mathbb{Z}_{a}$ if $b_{2}=b_{3}=1$;
 \item the exotic embedding of $\mathfrak{S}_{5}$ into $\mathfrak{S}_{6}$ if $b_{1}=2$ and $b_{2}=b_{3}=3$;
 \item the alternating group $\mathfrak{A}_{a}$ if $b_{1},b_{2} \geq 2$ and $b_{1},b_{2},b_{3}$ have the same parity;
 \item the symmetric group $\mathfrak{S}_{a}$ otherwise.
\end{enumerate}
\end{thm}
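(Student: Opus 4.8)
The plan is to combine the description of the isoresidual monodromy via meridians around the resonance hyperplanes (set up earlier in this section) with the degeneration analysis of Section~\ref{sec:degentree}, so as to reduce the statement to identifying a transitive subgroup of $\mathfrak{S}_a$ generated by three cycles whose product is trivial. For $p=3$ one has $a=b_1+b_2+b_3-2$, the residual space $\mathcal{R}_3$ is a plane, and (since $A_I=A_{I^{c}}$) the only resonance hyperplanes are the three lines $A_{\{j\}}=\{\lambda_j=0\}$; hence $\mathcal{R}_3\setminus\mathcal{A}_3$ is the complement of three concurrent lines in $\mathbb{C}^{2}$, with $\pi_1\cong\mathbb{Z}\times F_2$ generated by a loop $\gamma_0$ around the origin and meridians $\tau_1,\tau_2,\tau_3$ satisfying $\tau_1\tau_2\tau_3=\gamma_0$. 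Normalising the poles at $0,1,\infty$, every differential of $\mathcal{H}$ is $\omega=C\,(z-w)^{a}z^{-b_1}(z-1)^{-b_2}\,dz$ with $w$ the position of the zero, so the stratum is parametrised by $(C,w)\in\mathbb{C}^{*}\times\bigl(\mathbb{P}^1\setminus\{0,1,\infty\}\bigr)$ and $\lambda_j=C\,P_j(w)$ for explicit polynomials $P_j(w)$ with $P_1+P_2+P_3=0$. Since the ratio $[P_1(w):P_2(w):P_3(w)]$ is independent of $C$ while $C$ is recovered from $w$ and any single $\lambda_j$, the loop $\gamma_0$ acts trivially on the fibre; thus the monodromy group is $G=\langle\sigma_1,\sigma_2,\sigma_3\rangle$ with $\sigma_j$ the local monodromy around $A_{\{j\}}$, $\sigma_1\sigma_2\sigma_3=1$, so $G=\langle\sigma_1,\sigma_2\rangle$. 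Finally $w\mapsto[P_1(w):P_2(w):P_3(w)]$ identifies the total space of the cover with a dense open subset of $\mathbb{P}^1$, which is connected, so $G$ acts transitively on the $a$ sheets.

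The next step is to determine the cycle type of each $\sigma_j$: it is a single cycle of length $m_j:=a-b_j+1$, fixing the remaining $b_j-1$ points. By Proposition~\ref{prop:degintro} (with $c_I=1$ and resonance degree $d_I=b_j-1$) a generic point of $A_{\{j\}}$ carries exactly $b_j-1$ honest differentials in its fibre, and these are the fixed points of $\sigma_j$. For the remaining $a-b_j+1$ sheets one uses that, as $\lambda_j\to0$, the corresponding roots $w$ of $\lambda_2P_1-\lambda_1P_2$ are precisely those colliding with the pole $p_j$ (when $p_j=\infty$, those escaping to infinity, since the degree of $\lambda_2P_1-\lambda_1P_2$ drops from $a$ to $b_j-1$): a leading-order expansion shows that these $m_j$ roots behave like the $m_j$-th roots of a quantity proportional to $\lambda_j$, so a small loop around $A_{\{j\}}$ cycles them. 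The limiting configuration is a differential whose zero has been absorbed by a pole, which is why the fibre count drops below $a$. As a check, $\sum_j(m_j-1)=(2a+1)-3=2a-2$, the Riemann--Hurwitz number of a connected genus-zero cover.

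It remains to identify the transitive $G\le\mathfrak{S}_a$ generated by cycles $\sigma_j$ of lengths $m_j$ with $\sigma_1\sigma_2\sigma_3=1$. Since $\operatorname{sgn}(\sigma_j)=(-1)^{m_j-1}=(-1)^{b_{j'}+b_{j''}}$ where $\{j,j',j''\}=\{1,2,3\}$, one has $G\le\mathfrak{A}_a$ precisely when $b_1,b_2,b_3$ have the same parity. After possibly reordering the poles, split into cases. If two of the $b_j$ equal $1$, say $b_2=b_3=1$ and $b_1=a$, then $m_1=1$, so $\sigma_1=\mathrm{id}$, $\sigma_3=\sigma_2^{-1}$ and $G=\langle\sigma_2\rangle\cong\mathbb{Z}_a$: this is case (i). Otherwise all $m_j\ge2$, and one first shows that $G$ is primitive --- a nontrivial block system would be incompatible with the cycle structures of the $\sigma_j$ and the relation $\sigma_1\sigma_2\sigma_3=1$. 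If moreover some $b_j\ge4$, then $m_j\le a-3$, and a Jordan-type theorem --- a primitive group of degree $n$ containing a cycle with at least three fixed points is $\mathfrak{A}_n$ or $\mathfrak{S}_n$, which rests on the classification of primitive groups containing a cycle --- forces $G\supseteq\mathfrak{A}_a$; together with the parity criterion this gives $\mathfrak{A}_a$ or $\mathfrak{S}_a$, cases (iii)--(iv). The remaining strata satisfy $\max_{j}b_j\le3$, hence $a\le7$, leaving finitely many strata to check; each produces $\mathfrak{A}_a$ or $\mathfrak{S}_a$ except $\{b_1,b_2,b_3\}=\{2,3,3\}$, where $(m_1,m_2,m_3)=(5,4,4)$ and $a=6$. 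There $G$ is a primitive subgroup of $\mathfrak{S}_6$ containing a $5$-cycle and $4$-cycles, and the primitive groups of degree $6$ containing an odd permutation are $\mathfrak{S}_6$ and $\mathrm{PGL}_2(5)\cong\mathfrak{S}_5$; reading the monodromy permutations off the tree of the degenerate differential shows that $G$ is the transitive $\mathrm{PGL}_2(5)$ acting on the six points of $\mathbb{P}^1(\mathbb{F}_5)$, i.e.\ the exotic embedding of $\mathfrak{S}_5$ into $\mathfrak{S}_6$: this is case (ii).

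I expect the genuine difficulty to lie in this last, sporadic case together with the primitivity statement: cycle-type data alone cannot distinguish $\mathfrak{S}_6$ from the exotic $\mathfrak{S}_5$, so one must exploit the combinatorial model --- the tree attached to the degenerate differentials --- to compute the actual monodromy permutations there, and likewise to rule out proper block systems uniformly. Confirming that $\{b_1,b_2,b_3\}=\{2,3,3\}$ is the unique exception is then the crux of the argument.
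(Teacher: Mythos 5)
Your route is genuinely different from the paper's, and most of its skeleton is sound. The explicit parametrisation $\omega=C(z-w)^{a}z^{-b_{1}}(z-1)^{-b_{2}}dz$, the triviality of the central loop (so that $\sigma_{1}\sigma_{2}\sigma_{3}=1$ for suitable meridians --- indeed the paper's three generators for $\mathcal{H}(6,-2,-3,-3)$ in Proposition~\ref{prop:last} do compose to the identity), and your local root-collision analysis giving that $\sigma_{j}$ is a single cycle of length $a+1-b_{j}$ with $b_{j}-1$ fixed points all agree with what the paper derives combinatorially (Propositions~\ref{prop:monodromieloop} and~\ref{prop:degintro}); the parity criterion matches Corollary~\ref{cor:alternate}, and your case (i) is cleaner than the paper's (Corollary~\ref{cor:monoex}). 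The essential methodological difference is that the decorated trees give the paper strictly more than cycle types: they show that two of the cycles overlap in \emph{consecutive} elements, so the pair is conjugate to a pair of interval cycles and Piccard's Lemma~\ref{lem:piccard}(ii) applies directly (Propositions~\ref{prop:monosimple} and~\ref{prop:monocomp}), with the three exceptional pairs all landing in $\mathcal{H}(6,-2,-3,-3)$. This bypasses primitivity and any Jordan-type theorem, where your argument invokes the (CFSG-flavoured) classification of primitive groups containing a cycle.

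There are, however, two gaps. The principal one is case (ii): cycle types, transitivity and the relation only narrow the group to $\mathfrak{S}_{6}$ or the exotic $\mathfrak{S}_{5}$ (both are primitive, contain $5$-cycles and $4$-cycles with two fixed points, and contain odd permutations), so, as you yourself concede, the decisive step is computing the actual permutations. That computation is exactly the paper's Proposition~\ref{prop:last}: one must list the six decorated trees over a chamber and, nontrivially, transport the action of the third generator across the wall $A_{1}$ via the path $\sigma_{1}$ before multiplying; without carrying this out, the only case where the exotic group can occur is left unproved. The second gap is the primitivity claim for ``all $m_{j}\geq 2$'', which you assert rather than prove; it is true and admits a short argument you should supply: if $d\mid a$ with $2\leq d\leq a/2$ is a block size, then each cycle $\sigma_{j}$ with a fixed point must either have its whole support in one block ($m_{j}\leq d$) or permute pure blocks ($d\mid m_{j}$); since $m_{1}+m_{2}+m_{3}=2a+1$, at most one $m_{j}$ can be $\leq d$ and not all three can be divisible by $d$ (else $d\mid 1$), which forces $m_{1}=m_{2}=a$, $m_{3}=1$, i.e.\ precisely case (i). With that inserted, and with the finitely many strata having all $b_{j}\leq 3$ (hence $a\leq 7$) checked against the known transitive groups of small degree, your argument does establish (i), (iii) and (iv); only the sporadic stratum still requires the paper's explicit tree computation.
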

\smallskip
\par

\paragraph{\bf Ideas of proof.}
The problem is approached geometrically in the framework of translation surfaces, see \cite{Zo}. Indeed, the integration of differential $1$-forms defines a developing map of a geometric structure locally modelled on $\mathbb{C}$ up to translations. Any Riemann surface endowed with meromorphic $1$-form $\omega$ leads to a \textit{translation surface (with poles)}. Usually, the dictionary between complex analysis and flat geometry is used to study dynamical systems defined on translation surfaces (coming from rational billiards) with tools from algebraic geometry. An excellent account of such use is given by~\cite{Moe}. In this paper, we do exactly the opposite. We translate a purely algebraic problem into a question about geometry of translation surfaces. The classification of the connected components of the strata in \cite{KZ} is one of the most striking result proceeding in this direction.\newline

More precisely, the proof of Theorem \ref{thm:numerofibre} contains several steps. The first is to show that any fiber of the isoresidual fibration above $\mathcal{R}_{p} \setminus \mathcal{A}_{p}$ is in bijection with a fiber over a configuration of real residues. These translation surfaces have only real residues, and hence are especially easy to handle. For example, we show that their geometry is completely characterized by their residues and some combinatorial informations encompassed in the \textit{decorated tree} that we introduce in Section~\ref{sec:decorated}. The enumeration of the decorated trees, mainly by combinatorial methods, in Section~\ref{sec:nonresonance} completes the proof of Theorem~\ref{thm:numerofibre}.\newline

 The interpretation in terms of trees is especially highlighting for degeneration phenomena. Indeed, the edges of the trees correspond to closed loops whose length is determined by the residues of the $1$-form. If a resonance equation is satisfied by the configuration of residues, then the differential that should correspond to this tree may be singular. The closed loop corresponding to the edge shrinks and we get a degenerate object corresponding to an element of the WYSIWYG compactification as discussed in Section~\ref{sec:degentree}. Using this interpretation, we prove Proposition~\ref{prop:degintro} in Section~\ref{sec:comptereson}.\newline

 The loops around the hyperplanes of $\mathcal{A}_{p}$ may display some nontrivial monodromy in the fibers. The identification of the degenerated elements of the isoresidual fiber with degenerated decorated trees allows us to understand combinatorially the monodromy of the fibration around each resonance hyperplane in terms of surgery of these trees. We provide the decomposition into cycles of the permutation induced on a fiber by the monodromy action of a loop around a hyperplane resonance in Propositions~\ref{prop:monodromieloop} and~\ref{prop:monodromieloopbis}. Combining these results with some classical results on permutation groups we prove Theorem~\ref{thm:monothree}.\newline

\paragraph{\bf  Related works and possible developments.}

Despite the importance of the isoresidual fibration, only few results are known about it. The image of the map has been computed in \cite{GT}. The present paper gives the first general results on the geometry of the fibration. In the spirit of Theorem~\ref{thm:numerofibre}, the first non trivial results in genus $0$ about the number of fibers above some special configurations of residues has been obtained in \cite{CC}. Our setting allows us to give an alternative proof of their results in this direction, see Proposition~\ref{prop:trivial}. In genus higher than zero, it seems that only few special cases in genus~$1$ are known, see \cite{CG}. \newline 

The study of the monodromy in enumerative problems is a classical problem. As in \cite{Ha} we compute it by "drawing arcs in the parameter space". The monodromy that we obtain share similarities with the ones appearing in that paper, but we show in Corollary~\ref{cor:alternate} that surprisingly the monodromy for a strata with more than $6$ poles is contained in the alternating group.\newline

A deeper study of the monodromy action of the fundamental group of resonance arrangements will be carried out in a subsequent work. For differential $1$-forms in genus zero with two zeroes, the isoresidual fibers are complex curves with cusps corresponding to degenerations where the two zeroes collide. The monodromy action for such strata is then understood as a representation of the group $W_{p}$ into the mapping class group, the action on the cusps being inherited from the case with one zero. This places our study in the classical field of representations of braid groups (and their relatives) into mapping class groups of manifolds.\newline

The importance of the isoresidual fibration comes in particular from unexpected connections with several other fields in mathematics. The meromorphic differentials with real periods have been extensively used under the denomination of Real-Normalized meromorphic differentials in \cite{GK1,GK2} in relation with integrable systems, like the Calogero-Moser system. Moreover, in the case of strata of differentials of genus zero with only one zero, the decorated trees that we introduce can be interpreted as a generalization of the Douady-Sentenac invariant introduced in \cite{DS} and used in \cite{CR} to classify deformations of polynomial vector fields in the complex plane.
Finally, the resonance arrangement is related to the double Hurwitz numbers, see \cite{CJM}, and many optimization problems, see \cite{KTT}.\newline

Finally, Dawei Chen and Miguel Prado informed us during the revision process that they reproved Theorem~\ref{thm:numerofibre} by other methods. Moreover, they could extend Proposition~\ref{prop:degintro} to the intersection of any number of resonance hyperplanes. See their work \cite{chpr}.\newline

\paragraph{\bf Organisation.}
The structure of the paper is the following:
\begin{itemize}
 \item In Section \ref{sec:merotrans}, we recall the background about translation surfaces and how they arise from $1$-forms. We discuss in particular the flat metric, saddle connections, and the contraction flow.
 \item In Section \ref{sec:transarbrees}, we introduce the resonance arrangement and the decorated trees associated to $1$-forms over real configurations of residues both above the nonresonant and the resonant locus.
 \item In Section \ref{sec:compter}, we enumerate the decorated trees in each strata above the nonresonance locus, proving Theorem~\ref{thm:numerofibre}, and above some resonant loci.
 \item In Section \ref{sec:monodromie}, we study the monodromy of the isoresidual fibration through its action on decorated trees, proving Theorem~\ref{thm:monothree}.
\end{itemize}

\section{From meromorphic $1$-forms to translation surfaces}
\label{sec:merotrans}

In this section, we recall the main concepts from flat geometry that we need. For more information, the reader can consult one of the very good introductory text including, but not restricted to, \cite{Ch,Wr,Zo}.

\subsection{Translation structures and local models}
\label{sec:locmod}

Let us consider a meromorphic differential $1$-form $\omega$ on a Riemann surface $X$. Such $\omega$ is locally of the form $f(z)dz$ where $f$ is a meromorphic function and $z$ a local coordinate. We denote by $\mathcal{H}(a_{1},\dots,a_{n},-b_{1},\dots,-b_{p})$ for $a_{i},b_{j}\geq1$ the \textit{stratum} that parametrizes to meromorphic $1$-forms with zeroes of orders $a_1,\dots,a_n$ and poles of order $b_1,\dots,b_p$ up to biholomorphism. The theorem of Riemann-Roch implies that $\sum_{i=1}^{n} a_{i} -\sum_{j=1}^{p} b_{j} = 2g-2$, where $g$ is the genus of $X$. In this paper, we focus on meromorphic $1$-forms on the Riemann sphere.\newline
We denote by $\Lambda$ the set of the zeroes of $\omega$ and by $\Delta$ the set of its poles. 
It should be noted that the poles are labeled in order to avoid complicated symmetry issues.\newline

Outside $\Lambda$ and $\Delta$, the integration of $\omega$ gives local charts to $\mathbb{C}$ whose transition maps are of the type $z \mapsto z+c$. The pair $(X,\omega)$ seen as a compact surface with such an atlas is called a \textit{translation surface}.
In a neighborhood of a zero of order $a>0$, the metric induced by $\omega$ admits a conical singularity of angle $(1+a)2\pi$, see \cite{Zo} for details.\newline

Before giving the local models for poles of the differential, we recall the following convention. The \textit{residue} of the differential at a pole is the period of the differential over a simple loop (positively oriented) around it. This convention differs from the usual one by a factor of $2i\pi$. Since our approach focuses on periods rather than coefficients, this convention is more suitable. In particular, in the case of strata $\mathcal{H}(a,-b_{1},\dots,-b_{p})$ of genus zero, if the residue at every pole is real, the period of any closed loop is real.\newline

In a neighborhood of a simple pole, the $1$-form $\omega$ is of the form $\dfrac{rdz}{z}$ where $2i\pi r$ is the residue at the pole. Its geometric interpretation is a semi-infinite cylinder whose waist curves have a period equal to residue $2i\pi r$, see \cite{Bo}. In particular, the residue is the only local invariant of a simple pole.\newline

A {\em flat cone of type $(a,-b)$} is the flat surface associated to the $1$-form $\omega$ of genus zero with a unique zero of order $a$ and a unique pole of order $b$. In the case $a=0$, the flat cone has no conical singularity and a unique pole of order two. This surface is the flat plane  where the pole of order two corresponds to the point at infinity.\newline
The neighborhood of a pole of order $b>1$ with trivial residue is the complement of a compact neighborhood of the conical singularity of the flat cone  of type $(a,-b)$.\newline
In order to get neighborhoods of poles of order $b>1$ with non trivial residue, we proceed in the following way. We take a flat cone of  type $(a,-b)$ and remove an $\epsilon$-neighborhood of a semi-infinite line starting from the conical singularity and a neighborhood of the conical singularity. Then, we identify the resulting boundaries of the neighborhood by an isometry. Rotating and rescaling gives a pole of order $b$ with the adequate residue. This construction is explained in details in~\cite{Bo}.\newline
Basically, a pole of order $b>1$ is the point at infinity for a cyclical gluing of $b-1$ flat planes along slits.\newline

\subsection{Saddle connections, Core and Period coordinates}

Every geometric notion that makes sense in the complex plane also makes sense in a translation surface away from the singularities. In particular, straight lines are well-defined as locally geodesic arcs. The circle of directions is globally defined in a translation surface. Therefore, any direction defines a directional foliation conjugated to a foliation of the complex plane in lines of the same slope.

\begin{defn} A {\em saddle connection} is a geodesic segment joining two conical singularities of the translation surface and such that all interior points are not conical singularities.
\end{defn}

Every saddle connection represents a relative homology class of $X$ punctured at the poles relatively to the zeroes. The length and the direction of a saddle connection in flat maps are respectively the modulus and the argument of the period of the meromorphic $1$-form on its homology class in $H_{1}(X \setminus \Delta,\Lambda)$.\newline

In this paper, the fact that we consider $1$-forms with a unique zero on the Riemann sphere makes everything is easier. Any saddle connection is a simple closed loop and its period is just the sum of the residues of the poles it encompasses. It is geometrically clear that every nontrivial homotopy class is represented by a unique broken geodesic formed by saddle connections and that two saddle connections representing the same homology class are the same. This implies in particular that the number of saddle connections of a translation surface belonging to these strata is at most~$\frac{p(p-1)}{2}$, where $p$ is the number of poles.\newline

Most of the geometry of a translation surface with poles is encompassed in a subsurface of finite area that is the convex hull of the conical singularities.

\begin{defn}\label{def:core}
A subset $E$ of a translation surface $(X,\omega)$ is \textit{convex} if and only if every element of any geodesic segment between two points of $E$ belongs to~$E$.\newline
The {\em convex hull} of a subset $F$ of a translation surface $(X,\omega)$ is the smallest closed convex subset of $X$ containing~$F$.\newline
The {\em core} of $(X,\omega)$ is the convex hull $core(X)$ of the conical singularities $\Lambda$ of~$\omega$. We denote by
$\mathcal{I}\mathcal{C}(X)$ the interior of $core(X)$ in $X$ and by $\partial\mathcal{C}(X) = core(X)\ \backslash\ \mathcal{I}\mathcal{C}(X)$ its boundary.
\end{defn}

The core separates the poles from each other. The following lemma shows that the complement of the core has as many connected components as there are poles. We refer to these connected components as \textit{domains of poles}. The following result is given in Proposition~4.4 and Lemma~4.5 of \cite{Ta}.

\begin{prop}\label{prop:decomppoles}
For a translation surface with $p$ poles $(X,\omega)$, the boundary of the core $\partial\mathcal{C}(X)$ is a finite union of saddle connections. Moreover, $X \setminus core(X)$ has $p$ connected components. Each of them is a topological disk that contains a unique pole.
\end{prop}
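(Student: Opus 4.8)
The plan is to analyze the structure of $X \setminus core(X)$ using the local models of poles recalled in the previous section. First I would observe that since the core is the convex hull of the single zero $\Lambda$ (here a single point, since we work with one zero, though the cited result in \cite{Ta} is stated more generally), its boundary $\partial\mathcal{C}(X)$ is made of geodesic arcs. Any such boundary arc is a locally geodesic segment that cannot pass through a pole (poles are at infinite distance in the flat metric) and whose endpoints must be conical singularities — otherwise one could push the arc outward while staying in the convex hull, contradicting minimality, or one could shortcut it, contradicting that it lies on the boundary of a convex set. Hence $\partial\mathcal{C}(X)$ is a finite union of saddle connections; finiteness follows from the bound $\frac{p(p-1)}{2}$ on the number of saddle connections established just above via the fact that on these genus-zero strata each homology class has a unique geodesic representative.

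Next I would show that each connected component of $X \setminus core(X)$ contains exactly one pole. Since $core(X)$ has finite area and $X$ has infinite area (each pole of order $\geq 1$ contributes an infinite-area end, as seen from the semi-infinite cylinder model for simple poles and the slit-plane gluings for higher-order poles), every component of the complement must contain at least one pole — a component containing no pole would be a relatively compact flat region with geodesic boundary contained in the core, contradicting convexity/minimality of the core. For the reverse inclusion — that a component contains no more than one pole — I would argue that if two poles lay in the same component $U$, one could join the two corresponding punctures (or rather points near them) by a path in $U$, and then running the contraction flow or taking geodesic representatives one would produce a geodesic segment between the singularities that exits $core(X)$; but the core is convex and contains the unique zero, so... the cleaner route is: the boundary $\partial U$ is a union of saddle connections forming a closed curve on the sphere, which separates $X$ into the component $U$ on one side; I would show $U$ is a topological disk by a Euler characteristic / genus-zero count, and a disk with one boundary circle on the sphere minus the core can only ``cap off'' one end, hence one pole. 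Concretely, using that $X = \mathbb{CP}^1$, the complement of the closed disk $core(X)$ is a disjoint union of open disks, one per boundary circle of the core; it then remains to see the boundary of the core is itself a disjoint union of circles (not a more complicated graph), which follows because at each conical singularity the core occupies the full cone angle except possibly finitely many sectors, and a careful local analysis shows the boundary is an embedded $1$-manifold.

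The main obstacle I expect is the last point: controlling the \emph{local} structure of $\partial\mathcal{C}(X)$ at the conical singularity, i.e. ruling out that the boundary of the convex hull has branch points or that the zero lies on the boundary in a degenerate way, and thereby concluding each complementary component is a genuine topological disk containing a single pole. Since the statement is quoted verbatim from Proposition~4.4 and Lemma~4.5 of \cite{Ta}, in the write-up I would either cite those directly or reproduce the short argument: convexity forces the boundary, at the unique zero of angle $(1+a)2\pi$, to consist of saddle connections leaving the singularity, between consecutive ones of which lies a domain of a pole; the number of such domains equals $p$ because each pole is ``visible'' from the zero along some geodesic direction and distinct poles occupy disjoint angular sectors, the disjointness again being a consequence of the uniqueness of geodesic representatives in these strata.
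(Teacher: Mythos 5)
The paper gives no proof of Proposition~\ref{prop:decomppoles} at all: it is imported from Proposition~4.4 and Lemma~4.5 of \cite{Ta}. So your fallback of simply citing those results is exactly what the authors do, and for the purposes of this paper that is the right thing to write.

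Your independent sketch, however, hinges on a step that fails. You reduce the component count to the claim that $\partial\mathcal{C}(X)$ is an embedded $1$-manifold, a disjoint union of circles, so that the complement of a closed disk in $\mathbb{CP}^{1}$ splits into one open disk per boundary circle. In the setting of this paper there is a single zero, so every saddle connection starts and ends at that zero: as soon as the boundary contains two saddle connections it is a wedge of loops through one point, not an embedded $1$-manifold, and the core need not be a closed disk at all. Indeed, Lemma~\ref{lem:decompreel} of this very paper shows that for real residues the core is degenerate, with empty interior, equal to the union of the zero and $p-1$ closed saddle connections; the complement still has $p$ disk components, but this follows from an Euler-characteristic count for a graph embedded in the sphere (or from a Gauss--Bonnet argument on each complementary component, as in \cite{Ta}), not from the boundary being a $1$-manifold. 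Relatedly, your ``at most one pole per component'' argument is started, abandoned mid-sentence, and then deferred to the $1$-manifold picture, and the closing assertion that distinct poles occupy disjoint angular sectors at the zero is not justified; the ``at least one pole'' step also needs more than the finite-area observation (finite area of the core only shows that some component contains a pole; excluding a pole-free component requires the convexity/Gauss--Bonnet argument you merely allude to). Finally, your finiteness argument via the bound $\frac{p(p-1)}{2}$ is specific to one zero in genus zero, whereas the proposition as taken from \cite{Ta} is stated for arbitrary translation surfaces with poles.
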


The core of a translation surface can always be triangulated by saddle connections (see \cite{Ta1} for a rigorous proof of this intuitive statement). This implies in particular that there is no deformation of translation surfaces that would not also deform the periods of the saddle connections. Moreover, it is shown in \cite{BCGGM3} that these periods give a local coordinate chart of the strata of meromorphic differentials.  Since these periods are linear combinations of the residues at the poles, the vector of the residues $(\lambda_{1},\dots,\lambda_{p-1})$ is a local coordinate for the stratum. Note that the residue theorem allows to recover the value of the last residue~$\lambda_{p}$ from the others.\newline

For strata $\mathcal{H}=\mathcal{H}(a,-b_{1},\dots,-b_{p})$ of meromorphic $1$-forms in genus zero, the isoresidual fibration $\res\colon\mathcal{H} \rightarrow \mathcal{R}_{p}$ is a finite map.   Note that this map is in general not surjective. For example, we prove in Proposition~\ref{prop:trivial} that the fiber over the uniformally zero configuration of residues is in fact empty. The image of this map is studied in details in~\cite{GT}.\newline

\subsection{Contraction flow}
\label{sec:contraction}

The group $GL^{+}(2,\mathbb{R})$ acts on each stratum of meromorphic $1$-forms by composition with coordinate functions, see \cite{Zo}. In this paper, we will only use a small part of this action.

\begin{defn} 
The {\em contraction flow} is the action of the semigroup of matrices $C^{t}=\begin{pmatrix} 1 & 0 \\ 0 & e^{-t} \end{pmatrix}$ preserving the real direction and contracting the imaginary direction.
\end{defn}

The action of the contraction flow on a translation surfaces with a unique saddle connection in the contracted direction is divergent. In the case of genus zero,  the saddle connection shrinks and the limiting surface is obtained by gluing two translation surfaces at a point. On the opposite, if there is no saddle connection in the contracted direction, the flow converges to a translation surface of the stratum where every period belongs to the preserved direction (see Subsection~5.4 of \cite{Ta} for details).\newline

Since it acts on the periods, the action of the contraction flow clearly commutes with the residue map.
Note moreover that since the resonance equations have real coefficients, the action preserves each resonance hyperplane of the residual space.\newline

\section{From translation surfaces to decorated trees}
\label{sec:transarbrees}

We begin by discussing in Section~\ref{sec:resonance} the notion of resonance arrangement introduced in the introduction. Then we describe $1$-forms with real periods in Section~\ref{sec:fibrereel}. We continue by associating a graph to these $1$-forms in Section~\ref{sec:decorated}.
Finally, we study the degenerations of $1$-forms above the resonance arrangement and associate a degenerated version of the trees to such element in Section~\ref{sec:degentree}.

\subsection{Resonance arrangement}\label{sec:resonance}

For a stratum $\mathcal{H}=\mathcal{H}(a,-b_{1},\dots,-b_{p})$ of meromorphic $1$-forms on the Riemann sphere, recall from the introduction that the {\em residual space} $\mathcal{R}_{p}$ is the complex vector space $\mathcal{R}_{p}$ of dimension $p-1$ formed by the $p$-vectors $(\lambda_{1},\dots,\lambda_{p})$ such that $\sum_{j=1}^{p} \lambda_{j} = 0$. Note that this vector space only depends on the number of poles $p$. Note moreover that this definition slightly differ from the one of \cite{GT}.\newline 

We now set some important notation.
\begin{defn}\label{def:resonance}
 Let $\mu=(a,-b_{1},\dots,-b_{p})$ be a partition of $-2$ and $I$ a subset of $\{1,\dots,p\}$, we denote by $d_{I}=-1+\sum_{j \in I} b_{j}$ the \textit{resonance degree} of $I$ and by $c_{I}$ the cardinal of $I$. 
\end{defn}
Note that we have $d_{I}+d_{I^{\complement}}=a$ and $c_{I}+c_{I^{\complement}}=p$.  

Recall from Definition~\ref{def:resonanceintro} that for any non trivial subset $I$ of $\{1,\dots,p\}$ we define a hyperplane of the residual space giving by the equation $\sum_{i\in I}\lambda_{i}$. Note that the resonance hyperplane defined by $I$ or its complement $I^{\complement}$ is the same. The union of these hyperplanes gives the  {\em resonance arrangement}, also known in the literature as the \textit{restricted all-subset arrangement}, see \cite{AM}. More precisely, for each non empty strict subset $I$ of $\lbrace 1,\dots,p \rbrace$ we define the hyperplane
$$A_{I} = \left\{ (\lambda_{1},\dots,\lambda_{p})\in\mathcal{R}_{p}: \sum_{i\in I} \lambda_{i} = 0 \right\}\,.$$
Note that $A_{I} = A_{I^{\complement}}$, where $I  \sqcup I^{\complement} =\lbrace 1,\dots,p \rbrace $.
The union $\mathcal{A}_{p}$ of all resonance hyperplanes~$A_{I}$ is a hyperplane arrangement in $\mathcal{R}_{p}$. It will be proved in Theorem~\ref{thm:dependarrangement} that the number of elements of an isoresidual fiber $\mathcal{F}$ only depends on the family of resonance hyperplanes $\mathcal{F}$ belongs to.\newline
In particular, the complement of the hyperplane arrangement is connected. In accordance with Arnold's Italian principle (see \cite[p. 64]{A}), we will obtain that every fiber of $\mathcal{R}_{p} \setminus \mathcal{A}_{p}$ has the same number of elements.\newline

If $p \leq 3$, the resonance arrangements are well understood. The case $p=2$ is 
\[ \mathcal{A}_{2} = \left\{ \lambda_{1} = 0 \right\} \subset \mathbb{C} \,.\]
So the arrangement complement $\mathcal{R}_{2} \setminus \mathcal{A}_{2}$ is isomorphic to $\mathbb{C}^{\ast}$.  
In the case $p=3$, $\mathcal{A}_{3}$ is formed by three complex hyperplanes with trivial mutual intersection in a complex vector space of dimension two as shown in Figure~\ref{fig:arrangement}.

Now we show that for $p\geq 4$, the hyperplane arrangement is of special kind. A {\em simplicial arrangement} is the complexification of an arrangement of real hyperplanes where some chambers are simplicial cones (they are cut out by the minimal number $p-1$ of hyperplanes).

\begin{lem}\label{lem:passimplice}
If $p \geq 4$, the resonance arrangement $\mathcal{A}_{p}$ is not a simplicial arrangement. 
\end{lem}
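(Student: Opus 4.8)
The plan is to exhibit, for each $p \geq 4$, a codimension-two face of the arrangement $\mathcal{A}_p$ whose local structure is not simplicial. Recall that an arrangement is simplicial if and only if every chamber of the induced arrangement on the sphere (equivalently, every region of $\mathcal{R}_p \setminus \mathcal{A}_p$) is an open simplicial cone; a standard way to violate this is to find a rank-$2$ localization of the arrangement — the hyperplanes of $\mathcal{A}_p$ containing a fixed codimension-$2$ subspace $L$ — consisting of at least three distinct hyperplanes, since then the chambers of that local rank-$2$ subarrangement are not all simplicial (three or more lines through the origin in a plane cut it into regions, and if the full localization restricted to a complementary slice gives more than a triple of lines the simple-cone condition fails). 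So the real task is: find $L$ of codimension $2$ such that strictly more than two resonance hyperplanes $A_I$ contain $L$, with the further feature that the link is not a simplex.

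First I would reduce to the smallest case $p=4$, and then observe that the construction persists for all larger $p$: given a witnessing configuration in $\mathcal{R}_4$, one pulls it back by setting $\lambda_5 = \cdots = \lambda_p = 0$, which embeds $\mathcal{R}_4 \hookrightarrow \mathcal{R}_p$ as an intersection of coordinate resonance hyperplanes $A_{\{5\}}, \dots, A_{\{p\}}$ and identifies the relevant resonance hyperplanes of $\mathcal{A}_4$ with those of $\mathcal{A}_p$ that contain this subspace; a non-simplicial face of the restricted arrangement in $\mathcal{R}_4$ then gives a non-simplicial face of $\mathcal{A}_p$. (One should double-check that the restriction of $\mathcal{A}_p$ to this subspace is exactly $\mathcal{A}_4$ plus possibly-empty hyperplanes, which is immediate from the definition of $A_I$.) So the heart of the argument is a finite check in dimension $3$, namely in $\mathcal{R}_4 \cong \{\lambda_1 + \lambda_2 + \lambda_3 + \lambda_4 = 0\} \subset \mathbb{C}^4$, where $\mathcal{A}_4$ consists of the seven hyperplanes $A_{\{1\}}, A_{\{2\}}, A_{\{3\}}, A_{\{4\}}$ and $A_{\{1,2\}} = A_{\{3,4\}}$, $A_{\{1,3\}} = A_{\{2,4\}}$, $A_{\{1,4\}} = A_{\{2,3\}}$.

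Concretely I would exhibit a line $L$ through the origin — for instance the line $\lambda_1 = 0,\ \lambda_2 = 0,\ \lambda_3 + \lambda_4 = 0$ (a codimension-$2$ subspace of $\mathcal{R}_4$) — and list which of the seven hyperplanes contain it: at least $A_{\{1\}}$, $A_{\{2\}}$, and $A_{\{1,2\}}$ do, giving three distinct hyperplanes through a rank-$2$ flat. One must verify these three are genuinely distinct as hyperplanes of $\mathcal{R}_4$ (they are, since their defining equations $\lambda_1 = 0$, $\lambda_2 = 0$, $\lambda_1 + \lambda_2 = 0$ are pairwise non-proportional on $\mathcal{R}_4$), and then either invoke the rank-$2$ criterion directly or, to be fully rigorous about the simpliciality obstruction, pass to the real picture: on the real points of $\mathcal{R}_4$ one identifies an explicit chamber bounded by more facets than $\dim = 3$, e.g. by examining the signs of $\lambda_1, \lambda_2, \lambda_3, \lambda_1 + \lambda_2, \lambda_1 + \lambda_3$ and showing one sign-pattern region is a cone over a quadrilateral rather than a triangle. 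The main obstacle is this last bookkeeping step — correctly enumerating the facets of one chamber of the seven-line arrangement on the $2$-sphere in $\mathcal{R}_4$ and confirming that at least one is not a spherical triangle; this is a concrete but slightly delicate combinatorial verification, and it is cleanest to do it for a single well-chosen chamber rather than attempting a full face-count of the arrangement.
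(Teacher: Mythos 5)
Your proposal has a genuine gap, and its central shortcut is based on a false criterion. Having three or more hyperplanes of $\mathcal{A}_p$ pass through a codimension-two flat does \emph{not} obstruct simpliciality: every central arrangement of rank at most two is simplicial, since each chamber of $k\geq 2$ concurrent lines in a plane is a sector spanned by two linearly independent rays. If your ``rank-$2$ localization'' test were valid it would declare every Coxeter arrangement non-simplicial, and already $\mathcal{A}_{3}$ (three concurrent lines, which is simplicial) would fail it. So the line $L=\{\lambda_{1}=\lambda_{2}=0\}\subset\mathcal{R}_{4}$, through which $A_{\{1\}}$, $A_{\{2\}}$, $A_{\{1,2\}}$ all pass, proves nothing: non-simpliciality is a statement about chambers, and ``invoking the rank-$2$ criterion directly'' is not an available option. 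Your fallback route --- exhibit an explicit real chamber in $\mathcal{R}_{4}$ with more than $3=\dim\mathcal{R}_{4}$ facets --- is exactly what is needed, and is exactly what the paper does, but you leave it as an unperformed ``bookkeeping step'': no chamber is specified and no facet count is verified. Since that verification \emph{is} the content of the lemma, the proposal as written does not prove it. For comparison, the paper's witness is the chamber cut out by $z_{1},z_{2},z_{1}+z_{2},z_{1}+z_{3},z_{1}+z_{4},-z_{3},-z_{4}\geq 0$, which is bounded by the four hyperplanes $A_{\{3\}}$, $A_{\{4\}}$, $A_{\{2,3\}}$, $A_{\{2,4\}}$ in the three-dimensional space $\mathcal{R}_{4}$.

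A secondary point: your reduction of $p\geq 5$ to $p=4$ via the flat $X=\{\lambda_{5}=\cdots=\lambda_{p}=0\}$ is a reasonable idea (the restriction $\mathcal{A}_{p}^{X}$ is indeed $\mathcal{A}_{4}$), but you assert without proof that a non-simplicial chamber of the restriction yields a non-simplicial chamber of $\mathcal{A}_{p}$. This implication is true but needs an argument --- e.g., the closure of any face of the arrangement is a polyhedral face of the closure of some chamber, and faces of simplicial cones are simplicial, so if every chamber of $\mathcal{A}_{p}$ were simplicial then every chamber of every restriction would be too --- because ``simplicial arrangement'' is a condition on top-dimensional chambers only, not on arbitrary faces. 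The paper avoids this by generalizing the explicit $p=4$ chamber directly to $\mathcal{R}_{p}$.
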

 
\begin{proof}
In the case $p=4$, the chamber defined by the seven inequalities $$z_{1},z_{2},z_{1}+z_{2},z_{1}+z_{3},z_{1}+z_{4},-z_{3},-z_{4} \geq 0$$ is bounded by the four resonance hyperplanes $A_{3}$, $A_{4}$, $A_{\{2,3\}}$ and $A_{\{2 , 4\}}$. 
This example is easily generalized to prove the case $p\geq5$.
\end{proof}
\smallskip
\par
Since the hyperplanes of the resonance arrangement are defined by linear forms with real coefficients, it induces an arrangement $\mathbb{R}\mathcal{A}_{p}$ of real hyperplanes in the subspace $\mathbb{R}\mathcal{R}_{p} = \left\{ (\lambda_{1},\dots, \lambda_{p}) \in \mathbb{R}^{p}: \sum \lambda_{i} = 0\right\} \subset \mathcal{R}_{p}$ formed by real configuration of residues.   This real arrangement has exactly the same incidence structure as the initial complex arrangement. In this case, any partial sum of residues can be negative, positive or zero. These real linear forms cuts out $\mathbb{R}\mathcal{R}_{p}$ into chambers and walls. We encompass this information into a sign function $\psi_{\mathcal{F}}$ associated to a real fiber $\mathcal{F}$ that assigns to any subset $I$ of $\{1,\dots,p\}$ the sign $\psi_{\mathcal{F}}(I)$ of the partial sum of the corresponding residues. We will equally see this function as a function of the chambers of the real arrangement. We get a morphism from the set $\mathcal{P}(1,\dots,p)$ of non trivial (i.e.  non empty and not the whole set) subsets  of $\{1,\dots,p\}$  to the hyperfield of signs $\{-,0,+\}$, see \cite{Vi}. Note that two real fibers whose sign functions vanish on the same subset belong to the same family of resonance hyperplanes.\newline

 Finally, we present these notions in an easy example.
  \begin{ex}\label{ex:arrangement}
 In this example we discuss the case of the stratum $\mathcal{H}(4;-2,-2,-2)$. 
 
 In Figure~\ref{fig:arrangement}, we show the resonance arrangement $\mathcal{A}_{3}$ in the residue space $\mathcal{R}_{3}$. This has~$3$ resonance hyperplanes $A_{1}$, $A_{2}$ and $A_{1,2}$. 
   \begin{figure}[ht]
\begin{tikzpicture}[scale=1]
   \fill[fill=black!10] (0,0) circle (2cm);
   \draw (-2,0) -- (2,0) coordinate[pos=.1] (a);
    \draw (0,-2) -- (0,2) coordinate[pos=.1] (b);
    \draw (135:2) -- (-45:2) coordinate[pos=.9] (c);
    \node[above] at (a) {$A_{2}$};
    \node[left] at (b) {$A_{1}$};
    \node[below] at (c) {$A_{1,2}$};
    
     \fill (1,-1/2)  circle (1pt);

\end{tikzpicture}
 \caption{The resonance arrangement in the plane $(\lambda_{1},\lambda_{2})$.} \label{fig:arrangement}
\end{figure}
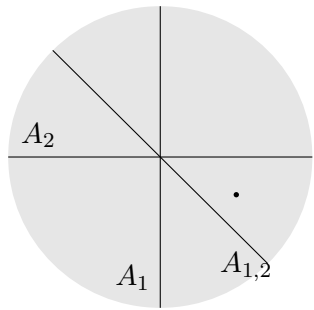

The isoresidual fiber in the complement of $\mathcal{A}_{3}$ has~$4$ elements. Let us pick a point in the complement, for example the point $(2,-1,-1)$ as pictured in Figure~\ref{fig:arrangement}. Note that the sign function $\psi$ at this point is
\[ \psi(1) = +,\ \psi(2) = -,\ \psi(3) = -,\ \psi(1,2) = +,\ \psi(1,3) = +,\ \psi(2,3) = -\,.\] Two of the four differentials are pictured in Figure~\ref{fig:arrangement1} as we will explain in the next paragraph. The two other differentials are obtained by permuting the role of $p_{2}$ and $p_{3}$. 
   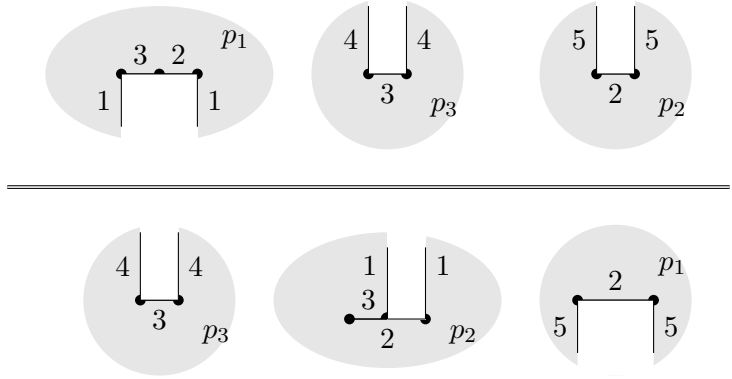
\begin{figure}[ht]
\begin{tikzpicture}[scale=1]
      \fill[fill=black!10] (0,0) ellipse (1.5cm and .9cm);
   \draw (-.5,0) coordinate (a) -- node [above] {$3$} 
(0,0) coordinate (b)  -- node [above] {$2$} 
(.5,0) coordinate (c);
 \fill (a)  circle (2pt);
\fill[] (b) circle (2pt);
\fill[] (c) circle (2pt);

    \fill[white] (a) -- (c) -- ++(0,-1) --++(-1,0) -- cycle;
    \draw (a) -- (c);
 \draw  (a)  -- ++(0,-.7) coordinate[pos=.5] (e);
\draw  (c)  -- ++(0,-.7) coordinate[pos=.5] (f);
 \node[left] at (e) {$1$};
 \node[right] at (f) {$1$};
 \node at (1,.45) {$p_{1}$};
    
\begin{scope}[xshift=3cm]
      \fill[fill=black!10] (0,0) circle (1cm);
      \draw (-.25,0) coordinate (a) -- (.25,0) coordinate[pos=.5] (c) coordinate (b);
  \fill[] (a) circle (2pt);
\fill[] (b) circle (2pt);
    \fill[white] (a) -- (b) -- ++(0,1) --++(-.5,0) -- cycle;
 \draw  (b) --  (a);
 \draw (a) -- ++(0,.9) coordinate[pos=.5] (d);
 \draw (b) -- ++(0,.9)coordinate[pos=.5] (e);
\node[below] at (c) {$3$};
\node[left] at (d) {$4$};
\node[right] at (e) {$4$};
 \node at (.75,-.45) {$p_{3}$};
    \end{scope} 
    
    \begin{scope}[xshift=6cm]
      \fill[fill=black!10] (0,0) circle (1cm);
      \draw (-.25,0) coordinate (a) -- (.25,0) coordinate[pos=.5] (c) coordinate (b);
  \fill[] (a) circle (2pt);
\fill[] (b) circle (2pt);
    \fill[white] (a) -- (b) -- ++(0,1) --++(-.5,0) -- cycle;
 \draw  (b) --  (a);
 \draw (a) -- ++(0,.9) coordinate[pos=.5] (d);
 \draw (b) -- ++(0,.9)coordinate[pos=.5] (e);
\node[below] at (c) {$2$};
\node[left] at (d) {$5$};
\node[right] at (e) {$5$};
 \node at (.75,-.45) {$p_{2}$};
    \end{scope}    

\draw[double] (-2,-1.5) -- (7.5,-1.5);
%%%%%%%%%%%%%%%%%%%%%%%%%%%

  \begin{scope}[xshift=0cm,yshift=-3cm]

      \fill[fill=black!10] (0,0) circle (1cm);
      \draw (-.25,0) coordinate (a) -- (.25,0) coordinate[pos=.5] (c) coordinate (b);
  \fill[] (a) circle (2pt);
\fill[] (b) circle (2pt);
    \fill[white] (a) -- (b) -- ++(0,1) --++(-.5,0) -- cycle;
 \draw  (b) --  (a);
 \draw (a) -- ++(0,.9) coordinate[pos=.5] (d);
 \draw (b) -- ++(0,.9)coordinate[pos=.5] (e);
\node[below] at (c) {$3$};
\node[left] at (d) {$4$};
\node[right] at (e) {$4$};
 \node at (.75,-.45) {$p_{3}$};

\begin{scope}[xshift=3cm]
       \fill[fill=black!10] (0,0) ellipse (1.5cm and .9cm);
   \draw (-.5,-.25) coordinate (a) -- node [above] {$3$} 
(0,-.25) coordinate (b);
\draw  (a)  -- node [below] {$2$} (.5,-.25) coordinate (c);
 \fill (a)  circle (2pt);
\fill (b) -- (0,-.16) arc [start angle=90, end angle=180, radius=.9mm] -- ++(.9,0);
\fill[] (c) circle (2pt);

    \fill[white] (b) -- (c) -- ++(0,1.3) --++(-.5,0) -- cycle;
    \draw (b) -- (c);
 \draw  (b)  -- ++(0,.95) coordinate[pos=.75] (e);
\draw  (c)  -- ++(0,.95) coordinate[pos=.75] (f);
 \node[left] at (e) {$1$};
 \node[right] at (f) {$1$};
 \node at (1,-.45) {$p_{2}$};
    \end{scope} 
    
    \begin{scope}[xshift=6cm]
      \fill[fill=black!10] (0,0) circle (1cm);
      \draw (-.5,0) coordinate (a) -- (.5,0) coordinate[pos=.5] (c) coordinate (b);
  \fill[] (a) circle (2pt);
\fill[] (b) circle (2pt);
    \fill[white] (a) -- (b) -- ++(0,-1) --++(-1,0) -- cycle;
 \draw  (b) --  (a);
 \draw (a) -- ++(0,-.7) coordinate[pos=.5] (d);
 \draw (b) -- ++(0,-.7)coordinate[pos=.5] (e);
\node[above] at (c) {$2$};
\node[left] at (d) {$5$};
\node[right] at (e) {$5$};
 \node at (.75,.45) {$p_{1}$};
    \end{scope}    
    \end{scope}
\end{tikzpicture}
 \caption{Two of the four elements of the isoresidual fiber above the point $(2,-1,-1)$.} \label{fig:arrangement1}
\end{figure}

Let us explain Figure~\ref{fig:arrangement1}. Consider the three pictures above the line representing one differential in the fiber of $(2,-1,-1)$. Each of the three discs represents the local model of a pole of order $-2$, as explained in Section~\ref{sec:locmod}. Then we glue by horizontal translations the half-lines with the same labels $1$, $4$ and $5$. Now the two pairs of segments $2$ and $3$ are glued together by translations. The zero of order $4$ is pictured by the black discs.

Let us now explain why the pole $p_{1}$ has residue $2$. Take the loop bounding the disc represented the pole $p_{1}$ in Figure~\ref{fig:arrangement1} oriented in the indirect direction. By residue theorem the integral of the $1$-form is equal to the residue at $p_{1}$ (after rescaling by $2i\pi$ as explained in Section~\ref{sec:locmod}). 
  \end{ex}

\subsection{Real fibers}
\label{sec:fibrereel}

The translation surfaces in the fibers over real residues are especially easy to understand. In this subsection, we give their specific properties. They are a special case of  real-normalized meromorphic differentials of Grushevsky and Krichever, see \cite{GK1,GK2}. Besides, the square of any such $1$-form also belongs to the well-studied class of Strebel differentials, see \cite{St}. We also explain in what sense the isoresidual fibration can be understood by considering only real fibers.

Recall from Definition~\ref{def:core} that the core of a translation surface is the convex hull of the conical singularities.
\begin{lem}\label{lem:decompreel}
A meromorphic $1$-form in a stratum $\mathcal{H}(a,-b_1,\dots,-b_p)$ with real residues defines a translation surface with a degenerate core. The latter is the union of $p$ domains of poles, $p-1$ horizontal saddle connections and one conical singularity.
\end{lem}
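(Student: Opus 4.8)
The plan is to use the reality of the residues to reduce everything to \emph{horizontal} saddle connections, deduce from this that the core is degenerate, and then count the pieces with Euler's formula on the sphere. I would begin with the observation recalled in Section~\ref{sec:merotrans}: since the residues $\lambda_{1},\dots,\lambda_{p}$ are real, every period of the translation surface $(X,\omega)$ is real; as each saddle connection is here a simple closed loop whose period is a sum of residues, that period is a nonzero real number, so every saddle connection is horizontal. In particular every edge occurring in any triangulation of the core by saddle connections is horizontal.

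The crux is then the degeneracy of the core. Suppose, for contradiction, that $\mathcal{I}\mathcal{C}(X)\neq\emptyset$, so $core(X)$ has positive area; being triangulable by saddle connections (with the reference to~\cite{Ta1} recalled above), it then contains a nondegenerate flat triangle all three of whose sides are horizontal saddle connections — impossible, since the sides of a nondegenerate Euclidean triangle cannot be mutually parallel. Hence $\mathcal{I}\mathcal{C}(X)=\emptyset$, and by Proposition~\ref{prop:decomppoles} the core coincides with $\partial\mathcal{C}(X)$, a finite union of horizontal saddle connections. Since the unique conical singularity of $\omega$ is its single zero $z$ (of order $a$), the core is a graph $\Gamma\subset X\simeq\mathbb{CP}^{1}$ with the single vertex $z$, its edges being the horizontal saddle connections it contains. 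This is where I expect the only real subtlety to lie: the argument genuinely uses that the residues are real (otherwise triangulating edges need not be parallel), and it relies on Proposition~\ref{prop:decomppoles}.

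It remains to count the edges of $\Gamma$. By Proposition~\ref{prop:decomppoles}, $X\setminus\Gamma$ has exactly $p$ connected components, each an open topological disk (the domains of the poles). Because all complementary faces are disks, $\Gamma$ is connected (and in any case it is the convex hull of $\{z\}$), so Euler's formula for $\mathbb{CP}^{1}$ gives $1-E+p=2$, i.e.\ $E=p-1$. Thus $core(X)$ is a bouquet of $p-1$ horizontal saddle connections through $z$, and the decomposition $X=core(X)\sqcup\bigl(X\setminus core(X)\bigr)$ exhibits $(X,\omega)$ as the union of its single conical singularity $z$, its $p-1$ horizontal saddle connections, and its $p$ domains of poles, which is the assertion of the lemma.
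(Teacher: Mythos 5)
Your proof is correct and follows essentially the same route as the paper: real residues force all saddle connections to be horizontal, which rules out nondegenerate triangles and makes the core degenerate, and then Proposition~\ref{prop:decomppoles} together with an Euler characteristic count on the sphere gives the $p-1$ saddle connections. The extra details you supply (triangulability of the core and the explicit count $1-E+p=2$) are just expansions of the steps the paper leaves implicit.
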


\begin{proof}
The period of any saddle connection is real so all of them are horizontal. The core (see Definition~\ref{def:core}) thus does not contain any nondegenerate triangle and is thus itself degenerate. The core is then an embedded graph with one vertex corresponding to the unique zero and several edges corresponding to the saddle connections. According to Proposition \ref{prop:decomppoles}, the core cuts out the surface into $p$ domains of poles. The computation of Euler characteristic proves that there are $p-1$ saddle connections.
\end{proof}

The following proposition reduces the understanding of general fibers to fibers over a real configuration of residues which belongs to the same set of hyperplanes. 

\begin{prop}\label{prop:realgen}
Let $\mathcal{F}$ be a fiber in a stratum $\mathcal{H}(a;-b_{1},\dots,-b_{p})$ of meromorphic $1$-forms on the Riemann sphere with residues $\lambda=(\lambda_{1},\dots,\lambda_{p})$. Let $B$ be the set of resonance hyperplanes containing $\lambda$. There is a fiber $\mathcal{F'}$ over a configuration of real residues that is contained in $B$ and such that $\mathcal{F}$ and $\mathcal{F'}$ are in bijection.
\end{prop}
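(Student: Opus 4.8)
The plan is to use the contraction flow from Section~\ref{sec:contraction} to deform a differential with arbitrary residues into one with real residues, while keeping the configuration of residues inside the same family of resonance hyperplanes. Recall that the contraction flow $C^t_{\alpha,\theta}$ acts on a stratum by postcomposition with a linear map, hence it acts linearly on $\mathcal{R}_p$, and since every resonance equation has real coefficients it preserves each resonance hyperplane; in particular it preserves $B$ and its complement. So first I would fix a direction $\theta$ so that the line $\mathbb{R}\theta$ in $\mathbb{C}=\mathcal{R}_1$-coordinates becomes the new ``real'' axis after the flow, and pick the contracted direction $\alpha$ transverse to $\theta$ and, crucially, transverse to every saddle connection appearing on any surface in the (finite) fiber $\mathcal{F}$ — this is possible because there are only finitely many surfaces in $\mathcal{F}$ and on each only at most $\tfrac{p(p-1)}{2}$ saddle connections, so all forbidden directions form a finite set.

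The key point is then that along this flow, because no saddle connection lies in the contracted direction $\alpha$ on any surface of $\mathcal{F}$, the flow does not diverge: by the dichotomy recalled in Subsection~\ref{sec:contraction}, it converges to a translation surface of the same stratum in which every period lies in the preserved direction $\theta$. After a global rotation/rescaling identifying $\theta$ with $\mathbb{R}$, this limiting surface has all periods real, hence all residues real. Running the flow on every surface of $\mathcal{F}$ simultaneously produces a map $\mathcal{F}\to\mathcal{F}'$, where $\mathcal{F}'$ is the isoresidual fiber over the limiting real configuration of residues; and this limiting configuration lies in exactly the hyperplanes of $B$, since the flow moves inside the $B$-stratum of the arrangement (it fixes which partial sums vanish, as it acts by a real linear automorphism preserving each $A_I$ and their complements). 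The second step is to argue this map is a bijection. Surjectivity and injectivity both follow from reversibility: the inverse flow $C^{-t}_{\alpha,\theta}$ (the expansion in the direction $\alpha$) is well-defined on $\mathcal{F}'$ as long as the target residue configuration stays inside $\mathcal{R}_p\setminus$(the hyperplanes not in $B$), which it does by the invariance just mentioned, and it sends $\mathcal{F}'$ back to $\mathcal{F}$; since the isoresidual fibration is a branched cover and the flow is a homeomorphism on the relevant open sets commuting with $\res$, the induced maps on fibers are mutually inverse bijections. Alternatively, one invokes Arnold's principle / the fact that $\mathcal{R}_p\setminus\mathcal{A}_p$ and more generally each stratum of the arrangement is connected, so that the monodromy along the path traced by the flow gives the required bijection of fibers.

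The main obstacle I expect is the careful choice of the preserved and contracted directions: one must ensure simultaneously that (i) on \emph{every} surface of $\mathcal{F}$ the contracted direction avoids all saddle connections so that the limit exists and stays in the stratum (no collision of the zero with itself, no degeneration), and (ii) the flow path, projected to $\mathcal{R}_p$, never exits the locus cut out by ``$I\in B\Leftrightarrow\sum_{i\in I}\lambda_i=0$'' — i.e.\ it does not accidentally cross an extra resonance hyperplane. Point (i) is handled by the finiteness of the fiber and of the saddle-connection directions; point (ii) is automatic from the linearity and real-coefficient structure of the arrangement, since the flow acts by a one-parameter subgroup of real linear automorphisms each of which fixes every $A_I$ setwise, hence fixes the sign function $\psi$ up to the $\pm$ ambiguity and in particular its zero set. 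Once both are in place, identifying the limit surface's data with a genuine element of a real fiber $\mathcal{F}'$ and reversing the construction is routine.
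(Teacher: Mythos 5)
Your overall strategy is the paper's: contract toward the real direction, check the limit stays over the same set of resonance hyperplanes, and show the induced map on fibers is a bijection. However, two steps as written have genuine gaps. First, your genericity condition on the contracted direction $\alpha$ is too weak. You only require $\alpha$ to avoid the directions of the saddle connections of the surfaces in $\mathcal{F}$; this guarantees convergence of the flow, but not that the limit avoids extra resonances. Your claim that point (ii) is ``automatic from linearity'' controls the flow only at finite time, where $C^{t}_{\alpha,\theta}$ is an automorphism of $\mathcal{R}_{p}$; the limit map is the (non-invertible) projection of each residue onto the preserved direction, and a nonzero partial sum $\sum_{i\in I}\lambda_{i}$ with $A_{I}\notin B$ whose direction happens to be $\alpha$ projects to zero, so the limit configuration would lie in an additional hyperplane. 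Since not every subset $I$ is realized by a saddle connection on some surface of the fiber, avoiding saddle-connection directions does not exclude this. The paper's proof requires $\alpha$ to avoid the arguments of \emph{all} nonzero partial sums of the residues (a finite set), which simultaneously gives convergence and rules out new resonances; your argument needs the same strengthening, which is an easy but necessary fix.

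Second, and more seriously, your bijectivity argument does not work. The map $f\colon\mathcal{F}\to\mathcal{F}'$ is the time-infinity limit of the flow, not an element of the acting semigroup, so it has no inverse ``expansion flow'': an element of $\mathcal{F}'$ has all its periods in the preserved direction $\theta$, hence $C^{-t}_{\alpha,\theta}$ acts on it trivially (it only rescales the vanishing $\alpha$-components of the periods) and certainly does not send $\mathcal{F}'$ back to $\mathcal{F}$. The alternative you mention, lifting the flow path by ``monodromy'' of the fibration, presupposes that the isoresidual fibration is a covering (locally trivial) over the locus of configurations lying exactly in the hyperplanes of $B$; this is not known at this stage — it is essentially part of what the paper is establishing — and a branched cover does not admit path-lifting through an unidentified branch locus. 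The paper instead argues directly: injectivity because the residues are period coordinates, hence local coordinates on the stratum (two elements of $\mathcal{F}$ with the same limit would already coincide at large finite time, where the flow is invertible), and surjectivity by a local statement — every fiber over a configuration sufficiently close to that of $\mathcal{F}'$ surjects onto $\mathcal{F}'$ via the contraction flow, again by period coordinates — combined with covering the flow path in the residual space by finitely many such neighborhoods. You need an argument of this kind (or some other construction inverting $f$, e.g.\ via the combinatorial data) to close the proof.
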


\begin{proof}
We consider an isoresidual fiber $\mathcal{F}$. Up to conjugacy by the action of $GL^{+}(2,\mathbb{R})$, we can assume no partial sum of residues is purely imaginary. Then, we use the contraction flow preserving the real direction and contracting the imaginary direction. In particular, for any translation surface of $\mathcal{F}$, no saddle connection belongs to the contracted direction (the period of every saddle connection of a translation surface of genus zero with only one conical singularity is a partial sum of residues). Consequently, the contraction flow converges for every element of the fiber~$\mathcal{F}$.\newline
Since the contraction flow commutes with the residual map, the limit points of the contraction flow belong to the same fiber $\mathcal{F'}$ over a configuration of real residues. Moreover, this configuration satisfies the same resonance equations as that of $\mathcal{F}$. Indeed, the set of resonance hyperplanes containing the residues of $\mathcal{F'}$ clearly contains $B$. Besides, since no partial sum of residues is purely imaginary, no additional resonance equation is satisfied by the residues in fiber $\mathcal{F'}$.\newline
The contraction flow induces a map $f\colon\mathcal{F}\to\mathcal{F'}$. To conclude the proof of Proposition~\ref{prop:realgen}, it suffices to prove that this map is an bijection.\newline
For every element $x$ of $\mathcal{F'}$, residues $(\lambda_{1},\dots,\lambda_{p-1})$ define a local biholomorphism between a neighborhood $\mathcal{V}(x)$ of $x$ in the stratum and some open subset of $\mathbb{C}^{p-1}$. Since $\mathcal{F}$ is a finite set, we define for each $x \in \mathcal{F}$ a neighborhood $\mathcal{V}(x)$ such that $\mathcal{V}(x)$ and $\mathcal{V}(y)$ are disjoint if $x \neq y$.\newline
It is rather clear that for each $x \in X$ and some large enough $t$, neighborhood $\mathcal{V}(x)$ contains exactly one element of fiber $C^{t}(\mathcal{F})$ (conjugated to $\mathcal{F}$ by the contraction flow). Since the contraction flow commutes with the residue map, this implies directly that $f$ is surjective.\newline
If $f$ fails to be injective, then there are two elements $z,z' \in \mathcal{F}$ such that their limit under the contraction flow is $x \in \mathcal{F'}$. Then, for arbitrarily small neighborhood $\mathcal{V}$ of $x$ and some arbitrarily large $t$, $\mathcal{V}$ contains two distinct elements of $C^{t}(\mathcal{F})$. This contradicts local injectivity of period coordinates. Thus $f$ is a bijection between $\mathcal{F}$ and $\mathcal{F'}$.\newline
\end{proof}

\subsection{Decorated trees}
\label{sec:decorated}

We begin this section by introducing the notion of \textit{decorated tree} that will classify combinatorially the real $1$-forms of the strata $\mathcal{H}(a,-b_{1},\dots,-b_{p})$. Some decorated trees are pictured in Figure~\ref{fig:arrangement2}.

\begin{defn}\label{def:abstractdecoratedtree}
A \textit{decorated tree} is an embedded directed tree in the topological sphere such that every vertex is labeled and to every vertex is attached a nonnegative even number of unoriented half-edges. Moreover, there is a nonnegative even number of  half-edges between two adjacent edges with the same direction (at the vertex) and an odd number of half-edges between two edges of opposite directions.
% \begin{enumerate}
% 	\item Edges are oriented;
% 	\item To every vertex is attached an even number (possibly zero) of (unoriented) half-edges;
% 	\item A sign is attributed to each corner (depending on whether the corresponding angular sector bounds the upper or the lower half-plane). Two corners at the same vertex separated by an edge have the same sign. Two corners of the same vertex separated by a half-edge have opposite signs. Oriented edges go from a pair of negative corners to a pair of positive corners.
% \end{enumerate}
\end{defn}

% The latter condition implies in particular that at a vertex, between two edges of the same orientation, there is an even number of half-edges. Between two edges of opposite orientation, there is an odd number of half-edges.\newline

% The condition concerning the sign of the corners will be shown to be natural when we will assign decorated trees to meromorphic $1$-forms. It is very convenient when working abstractly on the graphs. However, since the sign of the corners is completely determined by the orientation of edges, we will forget it in the drawings.\newline

For any $1$-form $\omega$ of a stratum $\mathcal{H}(a,-b_{1},\dots,-b_{p})$ with non zero real residues, the horizontal trajectories starting from the unique conical singularity form an embedded graph in the sphere. Since a zero of order $a$ corresponds to a conical singularity of angle $(2a+2)\pi$, there are $2a+2$ such horizontal trajectories. These horizontal trajectories either converge to a pole or form a saddle connection. Since we consider trajectories in positive and negative directions, a saddle connection counts as a pair of horizontal trajectories. According to Lemma~\ref{lem:decompreel}, there are $p-1$ saddle connections in~$\omega$. Therefore, the \textit{horizontal graph} is formed by $p-1$ closed saddle connections and $2a-2p+4$ trajectories going to a pole.\newline
Since the local model a pole of order $b_{j}>1$ is the cyclic gluing of $b_{j}-1$ planes, there are exactly $2b_{j}-2$ horizontal trajectories going to the pole of order $b_{j}$. Note that this is consistent with the identity $\sum_{j=1}^{p} b_{j} = a+2$.\newline

It is more convenient to consider the following graph that is closely related to the dual graph of the horizontal graph. We will call it the \textit{decorated tree} of the $1$-form. 
% Recall 
% from \cite{BCGGM2}
%  that a {\em prong} at a zero $z$ of a $1$-form $\omega$ is a tangent vector horizontal in the metric induced by $\omega$.  

\begin{defn}\label{def:difftodecoratedtree}
Given a meromorphic $1$-form $\omega$ with real residues in $\mathcal{H}(a,-b_{1},\dots,-b_{p})$, its \textit{decorated tree}  $\mathfrak{t}(\omega)$ is the decorated tree such that:
\begin{enumerate}
	\item its vertices correspond to the $p$ poles of $\omega$ and have the corresponding labels;
	\item its edges correspond to saddle connections between two domains of poles and the orientation is such that it goes from the lower domain to the upper domain;
	\item its half-edges correspond to horizontal trajectories from the zero to this pole;
	\item its embedding is such that the order on the set of horizontal trajectories at the zero given by going positively around the zero is identical to the order on the edges and half-edges going positively around the tree.
% 	\item Each corner of the tree corresponds to an angular sector between two horizontal trajectories in the translation surface. The corner is positive or negative depending on whether the corresponding angular sector bounds the upper or the lower half-plane.
\end{enumerate}
\end{defn}

Let us justify that a decorated  tree associated to a differential with real residues is a decorated tree as defined in Definition~\ref{def:abstractdecoratedtree}. First note that there are $2b_{j}-2$ half-edges attached to the vertex~$j$. Moreover, the only point of Definition~\ref{def:abstractdecoratedtree} which is not clearly satisfied by $\mathfrak{t}(\omega)$ is the fact that there is a  even number of half-edges between two edges with the same direction and an odd number of half-edges between two edges of opposite directions. This fact follows from the fact that two consecutive edges with opposite directions  correspond to saddle connections which are locally in the same horizontal direction. \newline
We give an example of decorated tree associated to some meromorphic $1$-forms.
\begin{ex}\label{ex:arrangement2}
 We continue to study the stratum $\mathcal{H}(4;-2,-2,-2)$ as in  Example~\ref{ex:arrangement}. A point of the residual space which we consider lies in the lower right chamber between the~$A_{2}$ and $A_{1,2}$.   Consider the two differentials pictured in the right of Figure~\ref{fig:arrangement1}. The decorated trees associated to these differentials are pictured in Figure~\ref{fig:arrangement2}, the left graph corresponding to the top differential as explained now.
 \begin{figure}[ht]
\begin{tikzpicture}[scale=1,decoration={
    markings,
    mark=at position 0.5 with {\arrow[very thick]{>}}}]
   
\node[circle,draw] (a) at  (-2,0) {$p_{3}$};
\node[circle,draw] (b) at  (0,0) {$p_{1}$};
\node[circle,draw] (c) at  (2,0) {$p_{2}$};
\draw [postaction={decorate}]  (a) --  (b);
\draw[postaction={decorate}] (c) -- (b);

\draw (a) -- ++(120:.6);
\draw (a) -- ++(240:.6);
\draw (c) -- ++(60:.6);
\draw(c) -- ++(-60:.6);
\draw(b) -- ++(-120:.6);
\draw (b) -- ++(-60:.6);

% \node[] at (0,.5) {$+$};
% \node[] at (-1.75,.4) {$-$};
% \node[] at (-2.5,0) {$+$};
% \node[] at (-1.75,-.4) {$-$};
% \node[] at (-.45,-.3) {$+$};
% \node[] at (0,-.5) {$-$};
% \node[] at (.45,-.3) {$+$};
% \node[] at (1.75,.4) {$-$};
% \node[] at (2.5,0) {$+$};
% \node[] at (1.75,-.4) {$-$};

\begin{scope}[xshift=7cm]
\node[circle,draw] (a) at  (-2,0) {$p_{1}$};
\node[circle,draw] (b) at  (0,0) {$p_{2}$};
\node[circle,draw] (c) at  (2,0) {$p_{3}$};
\draw [postaction={decorate}]  (b) --  (a);
\draw[postaction={decorate}] (c) -- (b);

 \draw (a) -- ++(120:.6);
\draw (a) -- ++(240:.6);
\draw (c) -- ++(60:.6);
\draw (c) -- ++(-60:.6);
\draw (b) -- ++(90:.6);
\draw (b) -- ++(-90:.6);

% \node[] at (-.45,.3) {$+$};
% \node[] at (.45,.3) {$-$};
% \node[] at (-1.75,.4) {$-$};
% \node[] at (-2.5,0) {$+$};
% \node[] at (-1.75,-.4) {$-$};
% \node[] at (-.45,-.3) {$+$};
% \node[] at (.45,-.3) {$-$};
% \node[] at (1.75,.4) {$+$};
% \node[] at (2.5,0) {$-$};
% \node[] at (1.75,-.4) {$+$};
\end{scope}

 \end{tikzpicture}
 \caption{The decorated trees associated to the differentials of Figure~\ref{fig:arrangement1}} \label{fig:arrangement2}
\end{figure}
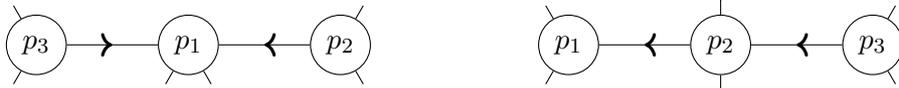

The pole $p_{1}$ is adjacent to each $p_{2}$ and $p_{3}$, so there is an edge connecting the vertex $p_{1}$ to the vertices $p_{2}$ and $p_{3}$. Since we go up to pass from $p_{i}$ with $i=2,3$ to the pole $p_{1}$ the orientations of these edges are from $p_{i}$ to $p_{1}$. Since all the poles are of order $-2$, each vertex has two half-edges. It remains to justify that both half-edges at the vertex $p_{1}$ are as pictured. There is no horizontal trajectory going from the intersection of the saddle connections corresponding to the segments $2$ and $3$ going to the pole $p_{1}$. One the other hand, there are two such trajectories, going left of the saddle $3$ and right of the saddle $2$. So Points~(3) and~(4) of Definition~\ref{def:difftodecoratedtree} implies that the half-edges are as pictured.
\end{ex}

The geometry of a $1$-form with real residues can be read on the associated decorated tree in an easy way. The periods are the (real positive) lengths of the saddle connections (one for each edge) and pair of consecutive edges or half-edges corresponds to an angle of~$\pi$. In a given stratum $\mathcal{H}(a,-b_{1},\dots,-b_{p})$, the $1$-forms with real residues that share the same decorated tree form a convex polytope of real dimension $p-1$ in the period coordinates of the stratum. This locus is globally defined by the inequalities saying that the lengths of saddle connections are strictly positive.\newline

It is clear that in an isoresidual fiber, two $1$-forms with the same decorated tree are equal (the periods allow to reconstruct the translation surface from the tree in unique way). Therefore, counting the elements of an isoresidual fiber amounts to enumerate decorated trees that are consistent with the configuration of residues. In particular, every fiber has finitely many elements.\newline

We first define abstractly the set of decorated trees compatible with a distribution of degrees and a sign function.

\begin{defn}\label{def:merodecoratedtree}
For any partition $(a,b_{1},\dots,b_{p})$ of $-2$ and any sign function $\psi$, a {\em  decorated tree compatible with $\psi$}  is a decorated tree $\mathfrak{t}$  such that:
\begin{enumerate}
	\item it has $p$ vertices labelled by numbers from $1$ to $p$;
% 	and that are given weights $b_{1},\dots,b_{p}$;
	\item there are $2b_{j}-2$ half-edges on each vertex~$j$;
	\item each edge $e$ cuts $\mathfrak{t}$ into two subtrees with vertices corresponding to $I$ and $I^{\complement}$ in $\left\{1,\dots,p\right\}$ such that $e$ is oriented from the subtree $I$ to the subtree $I^{\complement}$, and $\psi(I)<0$ and $\psi(I^{\complement})>0$.
\end{enumerate}
The set of decorated trees compatible with $\psi$ is denoted by $\mathcal{T}(b_{1},\dots,b_{p},\psi)$.
\end{defn}

The importance of this notion is given by the following result showing that the set $\mathcal{T}(b_{1},\dots,b_{p},\psi)$ is in bijection with the isoresidual fiber $\mathcal{F}$ in the corresponding strata over the configuration of real residues such that $\psi = \psi_{\mathcal{F}}$. We begin with the non resonant case.

\begin{lem}\label{lem:arbresdiff}
Given a stratum $\mathcal{H}(a,-b_{1},\dots,-b_{p})$ of meromorphic $1$-forms on the Riemann sphere,  there is a bijection between an isoresidual fiber $\mathcal{F}$ over a configuration of real residues in the complement of the resonance arrangement and the set of decorated trees of $\mathcal{T}(b_{1},\dots,b_{p},\psi_{\mathcal{F}})$. 
\end{lem}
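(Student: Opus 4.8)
The plan is to construct two mutually inverse maps between the isoresidual fiber $\mathcal{F}$ and the set $\mathcal{T}(b_{1},\dots,b_{p},\psi_{\mathcal{F}})$. In one direction the map is already at hand: by Definition~\ref{def:difftodecoratedtree}, every $1$-form $\omega\in\mathcal{F}$ gives rise to its decorated tree $\mathfrak{t}(\omega)$. I would first check that $\mathfrak{t}(\omega)$ actually lies in $\mathcal{T}(b_{1},\dots,b_{p},\psi_{\mathcal{F}})$, i.e. that conditions (1)--(3) of Definition~\ref{def:merodecoratedtree} hold. Condition (1) is immediate (vertices are the $p$ labeled poles); condition (2) was already noted after Definition~\ref{def:difftodecoratedtree} (there are $2b_j-2$ half-edges at vertex $j$, using $\sum b_j=a+2$ together with Lemma~\ref{lem:decompreel}). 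For condition (3): an edge $e$ is a saddle connection, which by the discussion in Section~\ref{sec:merotrans} is a simple closed loop whose period is the sum of residues it encloses; removing $e$ disconnects the sphere into the domains of poles on each side, say indexed by $I$ and $I^{\complement}$, so the period of $e$ equals $\sum_{j\in I}\lambda_j=-\sum_{j\in I^{\complement}}\lambda_j$. Since the residues are real and in the complement of $\mathcal{A}_p$, this partial sum is a nonzero real number; the orientation convention (from lower to upper domain) in Definition~\ref{def:difftodecoratedtree}(2) forces the side that sits \emph{below} to have negative partial sum and the side \emph{above} to have positive partial sum, which is exactly $\psi_{\mathcal{F}}(I)<0$, $\psi_{\mathcal{F}}(I^{\complement})>0$. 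So $\omega\mapsto\mathfrak{t}(\omega)$ is a well-defined map $\mathcal{F}\to\mathcal{T}(b_{1},\dots,b_{p},\psi_{\mathcal{F}})$.

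Injectivity of this map has essentially been observed already: as remarked before Definition~\ref{def:merodecoratedtree}, the decorated tree together with the residue data reconstructs the translation surface uniquely — each edge is a saddle connection whose length is the modulus of the (now determined) partial sum of residues and whose direction is horizontal, each pair of consecutive edges/half-edges at the vertex subtends angle $\pi$, and the cyclic (embedding) order at the vertex is prescribed by condition (4) of Definition~\ref{def:difftodecoratedtree}. Thus two $1$-forms in $\mathcal{F}$ with the same decorated tree coincide; I would spell this reconstruction out slightly more carefully, noting that the flat structure obtained is determined up to the (trivial, since there is a single zero placed at the origin) global translation.

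The substantive part is surjectivity: given an abstract decorated tree $\mathfrak{t}\in\mathcal{T}(b_{1},\dots,b_{p},\psi_{\mathcal{F}})$, one must build a genuine $1$-form $\omega$ in the stratum with the prescribed real residues whose decorated tree is $\mathfrak{t}$. The construction is the inverse of the reconstruction above: to each edge $e$, with associated partition $(I,I^{\complement})$, assign the positive real length $\ell_e=|\sum_{j\in I}\lambda_j|$ and glue, along horizontal slits, the domains of poles dictated by the vertices — each vertex $j$ being a neighborhood of a pole of order $b_j$ with residue $\lambda_j$, realized as the cyclic gluing of $b_j-1$ slit planes as recalled in Section~\ref{sec:merotrans} — in the cyclic order around the unique zero prescribed by the planar embedding of $\mathfrak{t}$. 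The compatibility conditions in Definitions~\ref{def:abstractdecoratedtree} and~\ref{def:merodecoratedtree} (parity of half-edges between edges of equal vs.\ opposite direction, and the sign condition on each edge) are exactly what is needed for this gluing to close up consistently into a translation surface with a single conical point of angle $(2a+2)\pi$: the angle bookkeeping at the vertex works out because consecutive edges of opposite direction are locally collinear while an odd number of half-edges between them accounts for the extra $\pi$. One then checks the resulting surface is the sphere (Euler characteristic, or simply that the tree is a tree so the dual gluing pattern is planar and simply connected), that the residue at pole $j$ is indeed $\lambda_j$, and that its decorated tree is $\mathfrak{t}$ by construction. I expect this gluing/closing-up verification to be the main obstacle — in particular checking that the local angles around the zero sum correctly and that no extra saddle connections (hence no extra edges forcing a different tree) appear, the latter following from the fact established in Section~\ref{sec:merotrans} that a saddle connection is determined by its homology class and there are at most $\binom{p}{2}$ of them, together with the nonresonance hypothesis ruling out degenerate (zero-length) loops. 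Once both maps are shown well-defined and mutually inverse, the bijection follows.
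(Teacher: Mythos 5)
Your proposal is correct and follows essentially the same route as the paper: the paper's proof consists precisely of the inverse construction you describe for surjectivity (building the $1$-form from the tree by successively gluing polar domains made of half-planes and planes with lengths given by the partial sums of residues), while well-definedness and injectivity are treated as already established by the discussion preceding Definition~\ref{def:merodecoratedtree}, exactly as you note.
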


\begin{proof}
Given real residues $\lambda=(\lambda_{1},\dots,\lambda_{p})$, the meromorphic $1$-forms in the isoresidual fiber of $\lambda$ are characterized by their decorated trees. Indeed, given a decorated tree $\mathfrak{t}$, we can construct a unique $1$-form in the following way. Take a leaf of $\mathfrak{t}$ with label~$i$. We associate to this leaf a polar domain of type $b_{i}$ with residue~$\lambda_{i}$. This operation is performed at each leaf of $\mathfrak{t}$ and we delete these leaf. Let us now consider a vertex, with label $j$, which correspond to a leaf of this new graph. There are two edges $e_{1}$ and $e_{2}$ which connect $j$ to the rest of the graph. We suppose that $e_{1}$ connects $j$ to the leaf $i$ and $e_{2}$ to the rest of the graph. Suppose that the two edges are consecutive. Then there are in the same direction, say going to $j$, and we take an half-plane with boundary the segments $r_{i}$ and $r_{j}-r_{i}$. Note that  Condition~(3) of Definition~\ref{def:merodecoratedtree}  ensures that $r_{j}-r_{i} \neq 0$. Then we glue to this half-plane a lower half-plane and finally glue $b_{i}-2$ planes to form a polar domain. If the edges are in the same direction but there are an even number $2a$ of half-edges in between, we take an upper-plane with boundary the segment $r_{i}$, then $a$ planes, then  an half-plane with boundary the segment $r_{j}-r_{i}$ and then the other planes. Finally if~$e_{1}$ and~$e_{2}$ are in opposite direction, we make the same construction with one upper half-plane and one lower half-plane. We glue the boundary $r_{i}$ of this polar domain to the one associated to~$i$. We continue this procedure to obtain a well-define $1$-form in this isoresidual cover. Clearly, this invert the procedure of associating a decorated tree from a $1$-form.
% Condition~(3) of Definition~\ref{def:merodecoratedtree} reflects the only geometric condition to ensure the reconstruction of the translation surface form the decorated tree. The function $\psi_{\mathcal{F}}$ is the only necessary information to know which edge can be realized as a saddle connection with the adequate orientation. Besides, since the graph is a tree, the sequence of residues provides the lengths of each saddle connection without any additional condition to check.
\end{proof}

\subsection{Resonance locus and degeneration}
\label{sec:degentree}
In order to study the isoresidual fibration above the resonant locus, it is useful to discuss degenerations of $1$-forms on the sphere. In the following, we do not consider the origin of the residual space, since  this case is very special. \newline

Above the resonance hyperplanes some decorated trees can not be realized  by $1$-forms since an edge corresponding to a resonance could have zero length.  There are various way to associate an object in these cases.  The most naive one is to quit these edges and to associate to each subtree a differential as in Lemma~\ref{lem:arbresdiff}. This corresponds to taking the WYSIWYG closure of the family and has its advantages and its drawbacks (see \cite{CW}). We denote by $\bar{\mathcal{H}}(a,-b_{1},\dots,-b_{p})$ the WYSIWYG closure of the stratum  $\mathcal{H}(a,-b_{1},\dots,-b_{p})$.

We now explain how decorated tree with sign functions that vanish on some subsets  lead to the elements of $\mathcal{H}(a,-b_{1},\dots,-b_{p})$. Given a decorated tree $\mathfrak{t}$ and a sign function $\psi$, we consider the set $E$ of edges of $\mathfrak{t}$  that separate the vertices into two subset $I$ and $I^{\complement}$ such that $\psi(I) = 0$. Then we delete the edges of $E$ of $\mathfrak{t}$ obtaining a set of disconnected trees. It is easy to see that these trees or decorated trees. Now on each tree, we can define a sign function by restriction of~$\psi$. The $1$-forms given by these decorated trees compatible with these sign function give an element of  $\mathcal{H}(a,-b_{1},\dots,-b_{p})$. Note that there is now correspondence since this procedure forgets about the way these $1$-forms are glued together.
In particular, we proved the following useful result.

\begin{lem}\label{lem:corWYSIWYG}
 Given a decorated tree $\mathfrak{t}$ and a sign function $\psi$, the differential corresponding to these data is singular if and only if there exists $I$ a strict subset of $\{1,\dots,p\}$ such that $\psi(I) = 0$ and there exists an edge of $\mathfrak{t}$ given two subtrees with vertices corresponding respectively to $I$ and $I^{\complement}$. 
\end{lem}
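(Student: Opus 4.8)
The statement to prove is Lemma~\ref{lem:corWYSIWYG}, which characterizes exactly when the differential built from a decorated tree $\mathfrak{t}$ together with a sign function $\psi$ is singular, namely precisely when some edge of $\mathfrak{t}$ separates the vertices into $I \sqcup I^{\complement}$ with $\psi(I)=0$. The plan is to read off the singularity condition directly from the geometric meaning of the construction in Lemma~\ref{lem:arbresdiff}: each edge of the tree corresponds to a saddle connection (a closed loop), and the period of that loop is exactly the partial sum of the residues of the poles lying on one side of the edge, i.e.\ $\sum_{j \in I}\lambda_j$. Since the sign function records the sign of each such partial sum, the length of the saddle connection associated to an edge separating $I$ from $I^{\complement}$ vanishes precisely when $\psi(I)=0$.

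First I would recall from the discussion preceding the lemma (and from the geometry established in Section~\ref{sec:decorated}) that in a $1$-form with real residues the period of the saddle connection attached to a given edge equals the real quantity $\sum_{j\in I}\lambda_j$, where $I$ is the set of poles on one side of the edge. By Definition~\ref{def:merodecoratedtree}, orientation is chosen so that $\psi(I)<0$ and $\psi(I^{\complement})>0$ in the nonresonant case; the saddle connection has strictly positive length exactly when this partial sum is nonzero. Next I would argue the forward direction: if every edge separates the vertices into subsets on which $\psi$ is nonzero, then every saddle connection has strictly positive length, so the construction of Lemma~\ref{lem:arbresdiff} produces a genuine (nonsingular) translation surface in the stratum $\mathcal{H}(a,-b_1,\dots,-b_p)$, since no loop degenerates. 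Conversely, if some edge separates the vertices into $I$ and $I^{\complement}$ with $\psi(I)=0$, then the corresponding saddle connection has zero length, the associated closed loop shrinks to a point, and the surface degenerates: the contraction flow limit (as described in Subsection~\ref{sec:contraction}) glues two translation surfaces at a point, yielding an element of the boundary $\bar{\mathcal{H}}(a,-b_1,\dots,-b_p)$ rather than of the open stratum. This is exactly the degeneration phenomenon described just above the lemma.

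The only subtlety, and the step I expect to require the most care, is verifying that the condition is genuinely an ``if and only if'' at the level of the \emph{whole} decorated tree rather than edge by edge: one must check that an edge with vanishing partial sum forces an honest degeneration (the loop cannot be avoided by a homotopy to a nonzero-length representative) and, conversely, that \emph{no} edge with vanishing partial sum guarantees \emph{global} nonsingularity, i.e.\ that no other mechanism can make the differential singular. For the first point I would invoke the fact, recalled in Section~\ref{sec:merotrans}, that two saddle connections representing the same homology class coincide and that each nontrivial homotopy class has a unique geodesic representative, so the length $\left|\sum_{j\in I}\lambda_j\right|$ is an honest invariant that cannot be made positive. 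For the second point I would note that by Lemma~\ref{lem:decompreel} the core is precisely the union of these saddle connections and the single zero, so the only way the surface can degenerate is by collapse of one of these saddle connections; hence if all of them have positive length the surface is nonsingular. Combining the two directions gives the stated equivalence.
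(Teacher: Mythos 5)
Your proposal is correct and takes essentially the same approach as the paper: the paper's argument (the paragraph preceding the lemma) likewise identifies each edge of $\mathfrak{t}$ with a saddle connection whose period is the partial sum $\sum_{j\in I}\lambda_{j}$, deletes the edges on which $\psi$ vanishes, and observes that the resulting subtree differentials, glued at points, are exactly the singular (WYSIWYG) elements, so the differential is singular if and only if some such edge exists. Your extra care about uniqueness of geodesic representatives and the structure of the core (Lemma~\ref{lem:decompreel}) only makes explicit what the paper leaves implicit.
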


Using this notion, we can extend Lemma~\ref{lem:arbresdiff} without problems to the case above the resonance arrangement.  This will be an important tool in Section~\ref{sec:comptereson}.
\begin{lem}\label{lem:arbresdiffdegen}
Given a stratum $\mathcal{H}(a,-b_{1},\dots,-b_{p})$ of meromorphic $1$-forms on the Riemann sphere,  there is a bijection between an isoresidual fiber $\mathcal{F}$ over a configuration of real residues in $\mathcal{H}(a,-b_{1},\dots,-b_{p})$  and the set of decorated trees $\mathfrak{t}$ of $\mathcal{T}(b_{1},\dots,b_{p},\psi_{\mathcal{F}})$ for which there is no edge separating $\mathfrak{t}$ into two subtrees with $\psi_{\mathcal{F}}(I)=0$ for $I$ corresponding to the vertices of one subtree. 
\end{lem}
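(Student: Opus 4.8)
The plan is to bootstrap from Lemma~\ref{lem:arbresdiff}, which already handles the nonresonant case, and from Lemma~\ref{lem:corWYSIWYG}, which characterizes exactly when the $1$-form associated to a decorated tree is singular. First I would observe that the statement splits into two halves: (a) every $1$-form $\omega$ in the fiber $\mathcal{F}$ still produces a well-defined decorated tree $\mathfrak{t}(\omega)\in\mathcal{T}(b_1,\dots,b_p,\psi_{\mathcal{F}})$ with no edge separating the vertices into subsets $I,I^{\complement}$ with $\psi_{\mathcal{F}}(I)=0$; and (b) conversely, every such tree arises from a (unique, honest, nonsingular) $1$-form in $\mathcal{F}$. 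For direction (a), the key point is that the analysis producing $\mathfrak{t}(\omega)$ in Definition~\ref{def:difftodecoratedtree} is purely local/combinatorial — it only uses the horizontal trajectories emanating from the zero and the structure of the core described in Lemma~\ref{lem:decompreel} — and none of that breaks when some partial sums of residues vanish, since $\omega$ is by hypothesis an honest element of $\mathcal{H}(a,-b_1,\dots,-b_p)$, so all its saddle connections have strictly positive length. In particular, an edge of $\mathfrak{t}(\omega)$ corresponds to a saddle connection whose period is $\sum_{j\in I}\lambda_j$ up to sign; if $\psi_{\mathcal{F}}(I)=0$ this period would be zero, contradicting that $\omega$ is a genuine translation surface with that saddle connection. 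So $\mathfrak{t}(\omega)$ lands in the asserted subset, and Condition~(3) of Definition~\ref{def:merodecoratedtree} holds with strict sign inequalities exactly as in the nonresonant case.

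For direction (b), I would run the same reconstruction procedure as in the proof of Lemma~\ref{lem:arbresdiff}: starting from the leaves, attach polar domains of type $b_i$ with residue $\lambda_i$, then build half-plane (or stacked-plane) gluings along each edge, where the boundary segments have lengths given by the relevant partial sums $r_j - r_i$, i.e.\ by $\sum_{k}\lambda_k$ over the appropriate subset. The only thing that could go wrong relative to the nonresonant setting is that one of these edge-lengths vanishes — but that is precisely excluded by the hypothesis that no edge separates $\mathfrak{t}$ into subtrees with a zero-sum subset, combined with Lemma~\ref{lem:corWYSIWYG}, which says this is the exact criterion for the resulting differential to be singular. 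Hence every tree in the prescribed subset reconstructs an honest, nonsingular $1$-form, which by construction lies in the fiber $\mathcal{F}$ over the given real residues and has the prescribed decorated tree. The two constructions are mutually inverse, as in Lemma~\ref{lem:arbresdiff}, so they define the desired bijection.

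The last point to nail down is that the map is well-defined into $\mathcal{F}$ and not merely into the WYSIWYG closure $\bar{\mathcal{H}}$: one must check that the reconstructed surface really is a closed translation surface in the open stratum $\mathcal{H}(a,-b_1,\dots,-b_p)$, i.e.\ that it has a single zero of order $a$ and poles of the prescribed orders, and that its residues are the prescribed $\lambda_i$. The residue claim is immediate since each polar domain was glued with residue $\lambda_i$; the zero/pole orders follow from counting half-edges ($2b_j - 2$ at vertex $j$) and from the angle bookkeeping (each consecutive pair of edges/half-edges contributes an angle $\pi$ at the zero, giving total cone angle $(2a+2)\pi$), exactly as recorded after Definition~\ref{def:difftodecoratedtree}. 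I expect the main obstacle to be a purely expository one: making sure the reconstruction of Lemma~\ref{lem:arbresdiff} is phrased robustly enough that invoking it verbatim in the presence of some vanishing sign values is legitimate, and cleanly isolating the single place (the nonvanishing of each edge-length) where the resonant hypothesis is used — everything else is genuinely identical to the nonresonant argument.
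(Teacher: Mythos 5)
Your proposal is correct and takes essentially the same route as the paper: the paper gives no separate argument for this lemma, simply asserting that the construction of Lemma~\ref{lem:arbresdiff} extends ``without problems'' once Lemma~\ref{lem:corWYSIWYG} identifies a vanishing edge-length (i.e.\ an edge with $\psi_{\mathcal{F}}(I)=0$) as the only obstruction to producing a nonsingular $1$-form, which is exactly the two-way argument you spell out.
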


We give more examples of degeneration. First, if the resonance condition consists in the vanishing of the residue of a simple pole, then the degeneration is a differential with a simple pole less and a zero whose order has decreased by one.   Now we continue the case of the stratum $\mathcal{H}(4,-2,-2,-2)$ began in Example~\ref{ex:arrangement3}.

\begin{ex}\label{ex:arrangement3}
 We continue Examples~\ref{ex:arrangement} and~\ref{ex:arrangement2}. In Figure~\ref{fig:arrangement3}, we give the elements in $\bar{\mathcal{H}}(4,-2,-2,-2)$ lying above a point of the hyperplane~$A_{2}$. In this case, two elements  are $1$-forms in $\mathcal{H}(4,-2,-2,-2)$ (the second is changing the role of $p_{2}$ and $p_{3}$) and the other element is singular. We can easily check  that $3$ families of $1$-forms converge to the singular element and only $1$ to the $1$-form. This phenomena will be explained in Section~\ref{sec:comptereson}.
   \begin{figure}[ht]
\begin{tikzpicture}[scale=1,decoration={
    markings,
    mark=at position 0.5 with {\arrow[very thick]{>}}}]
   \fill[fill=black!10] (0,0) circle (2cm);
   \draw (-2,0) -- (2,0) coordinate[pos=.1] (a);
    \draw (0,-2) -- (0,2) coordinate[pos=.1] (b);
    \draw (135:2) -- (-45:2) coordinate[pos=.9] (c);
    \node[above] at (a) {$A_{2}$};
    \node[left] at (b) {$A_{1}$};
    \node[below] at (c) {$A_{1,2}$};
    
     \fill (1,-.5)  circle (1pt);
    \draw[postaction={decorate}] (1,-.45) -- (1,-.05) coordinate[pos=.5] (e);
    \node[left] at (e) {$ \alpha$};   
         \fill (1,0)  circle (1pt);
    \draw[->] (3.5,0) .. controls ++(-140:1) and ++(-40:1) .. (1.05,-0.05) ;

    \begin{scope}[xshift=5cm,yshift=1.3cm]

      \fill[fill=black!10] (0,0) circle (1cm);
   \draw (-.25,0) coordinate (a) -- node [above] {$3$} 
(.25,0) coordinate (b) ;
 \fill (a)  circle (2pt);
\fill[] (b) circle (2pt);

    \fill[white] (a) -- (b) -- ++(0,-1) --++(-.5,0) -- cycle;
    \draw (a) -- (b);
 \draw  (a)  -- ++(0,-.9) coordinate[pos=.5] (e);
\draw  (b)  -- ++(0,-.9) coordinate[pos=.5] (f);
 \node[left] at (e) {$1$};
 \node[right] at (f) {$1$};
 \node at (1,.45) {$p_{1}$};
    
\begin{scope}[xshift=3cm]
      \fill[fill=black!10] (0,0) circle (1cm);
      \draw (-.25,0) coordinate (a) -- (.25,0) coordinate[pos=.5] (c) coordinate (b);
  \fill[] (a) circle (2pt);
\fill[] (b) circle (2pt);
    \fill[white] (a) -- (b) -- ++(0,1) --++(-.5,0) -- cycle;
 \draw  (b) --  (a);
 \draw (a) -- ++(0,.9) coordinate[pos=.5] (d);
 \draw (b) -- ++(0,.9)coordinate[pos=.5] (e);
\node[below] at (c) {$3$};
\node[left] at (d) {$4$};
\node[right] at (e) {$4$};
 \node at (.75,-.45) {$p_{3}$};
    \end{scope} 
    
    \begin{scope}[xshift=6cm]
      \fill[fill=black!10] (0,0) circle (1cm);
    
  \filldraw[fill=white] (0,0) circle (2pt);
 \node at (.75,-.45) {$p_{2}$};
    \end{scope}    
\end{scope}

\draw[double] (4,0) -- (12,0);

%%%%%%%%%%%%%%%%%%%%%%%%%%%

  \begin{scope}[xshift=5cm,yshift=-1.2cm]

      \fill[fill=black!10] (0,0) circle (1cm);
      \draw (-.25,0) coordinate (a) -- (.25,0) coordinate[pos=.5] (c) coordinate (b);
  \fill[] (a) circle (2pt);
\fill[] (b) circle (2pt);
    \fill[white] (a) -- (b) -- ++(0,1) --++(-.5,0) -- cycle;
 \draw  (b) --  (a);
 \draw (a) -- ++(0,.9) coordinate[pos=.5] (d);
 \draw (b) -- ++(0,.9)coordinate[pos=.5] (e);
\node[below] at (c) {$3$};
\node[left] at (d) {$4$};
\node[right] at (e) {$4$};
 \node at (.75,-.45) {$p_{3}$};

\begin{scope}[xshift=3cm]
       \fill[fill=black!10] (0,0) circle (1cm);
   \draw (-.25,0) coordinate (a) -- node [above] {$3$} node [below] {$2$} 
(.25,0) coordinate (b) ;
 \fill (a)  circle (2pt);
\fill[] (b) circle (2pt);

 \node at (.75,-.45) {$p_{2}$};
    \end{scope} 
    
    \begin{scope}[xshift=6cm]
      \fill[fill=black!10] (0,0) circle (1cm);
      \draw (-.25,0) coordinate (a) -- (.25,0) coordinate[pos=.5] (c) coordinate (b);
  \fill[] (a) circle (2pt);
\fill[] (b) circle (2pt);
    \fill[white] (a) -- (b) -- ++(0,-1) --++(-.5,0) -- cycle;
 \draw  (b) --  (a);
 \draw (a) -- ++(0,-.9) coordinate[pos=.5] (d);
 \draw (b) -- ++(0,-.9)coordinate[pos=.5] (e);
\node[above] at (c) {$2$};
\node[left] at (d) {$5$};
\node[right] at (e) {$5$};
 \node at (.75,.45) {$p_{1}$};
    \end{scope}    
\end{scope}
\end{tikzpicture}
 \caption{The isoresidual fiber in $\bar{\mathcal{H}}(4,-2,-2,-2)$ above the resonance hyperplane $A_{2}$.} \label{fig:arrangement3}
\end{figure}
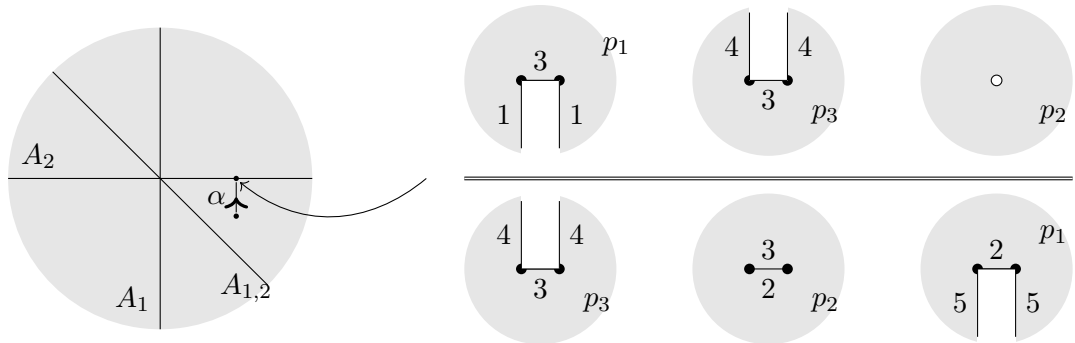
 If we consider the family of differentials above~$\alpha$, the differential pictured on the top, resp. the bottom, of Figure~\ref{fig:arrangement1} converges to the top, resp. bottom, differential pictured in Figure~\ref{fig:arrangement3}.  Finally, at this point, the sign function $\psi$ is
\[ \psi(1) = +,\ \psi(2) = 0,\ \psi(3) = -,\ \psi(1,2) = +,\ \psi(1,3) = 0,\ \psi(2,3) = -\,.\]
  \end{ex}
  
Note that  \cite[Theorem~1.2]{GT} can be rephrased in our settings as the characterization of configurations of residues over which the isoresidual fiber is empty for any stratum of meromorphic $1$-forms. It appears that such configurations lie in the intersection of too many resonance hyperplanes (and therefore any decorated tree would have at least one edge corresponding to a vanishing partial sum of residues).\newline

To conclude on the topic of degeneration of $1$-forms, it is worth noting that it is  possible to make other degenerations of the $1$-forms. For example, it is to use the incidence variety compactification described in~\cite{BCGGM}.  This has the advantage to keep track of more information than the WYSIWYG but the drawback is that it does not give a direct relation with the tree.  We could use the theory of multi-scale differentials introduced in \cite{BCGGM2} to keep track of lots of information and obtain a smooth family. But in order to obtain this family, we should blowup the residual space along intersection of resonance hyperplanes. This makes the picture more complicated for our purpose, but could be useful to study other problems.
\newline

Finally we show that the degree of the cardinality of an isoresidual fiber only depends on the resonance hyperplanes it belongs to. Recall that we show in Proposition~\ref{prop:realgen}  that in a given family of resonance hyperplanes there exists a fiber with real residues having the same number of elements that any given fiber. In Lemma~\ref{lem:arbresdiff} and~\ref{lem:arbresdiffdegen} we show that the number of elements of such a real fiber only depends of its sign function. We now show that it does not depends on the sign function.

\begin{thm}\label{thm:dependarrangement}
For any stratum $\mathcal{H}$ of rational differentials with one zero, the number of elements of an isoresidual fiber $\mathcal{F}$ only depends on the resonance hyperplanes $\mathcal{F}$ belongs to.
\end{thm}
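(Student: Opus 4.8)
The plan is to exhibit a connected piece of the residual space, containing the given fibre and determined only by the set of resonance hyperplanes through it, over which the isoresidual fibration restricts to a genuine (unramified) covering map; constancy of the fibre cardinality of a covering over a connected base then gives the statement.

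Let $\mathcal{F}$ be an isoresidual fibre, say over $\lambda\in\mathcal{R}_{p}$, let $B$ be the set of resonance hyperplanes containing $\lambda$ and put $L=\bigcap_{A\in B}A\subset\mathcal{R}_{p}$. Inside the complex vector space $L$, consider the open subset
\[
L^{\circ}=\{\mu\in L:\ \mu\in A_{K}\iff A_{K}\in B\}=L\setminus\bigcup_{A_{K}\not\supseteq L}\big(A_{K}\cap L\big)
\]
of residue configurations satisfying exactly the resonance equations of $B$. It depends only on $B$, it contains $\lambda$, and it is the complement of a finite hyperplane arrangement in the complex vector space $L$, hence connected. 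Thus it suffices to prove that $\res$ restricts to a covering map over $L^{\circ}$: every fibre belonging to exactly the resonance hyperplanes of $B$ sits over a point of $L^{\circ}$, hence is a fibre of this covering, and all such fibres have the same number of elements.

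The map $\res$ is a local biholomorphism at every point, since by Section~\ref{sec:merotrans} the residues $(\lambda_{1},\dots,\lambda_{p-1})$ form a local coordinate chart of the stratum. It remains to check that $\res\colon\res^{-1}(L^{\circ})\to L^{\circ}$ is proper. Let $(\omega_{n})$ be $1$-forms with $\lambda_{n}:=\res(\omega_{n})\to\lambda_{\infty}\in L^{\circ}$; we must produce a subsequential limit in $\res^{-1}(L^{\circ})$. After passing to a subsequence the combinatorial type of $\omega_{n}$ is constant: there are at most $\tfrac{p(p-1)}{2}$ saddle connections, each a simple closed loop determined by the set $K\subseteq\{1,\dots,p\}$ of poles it encloses, so only finitely many types occur. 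For a fixed such saddle connection (fixed $K$) its length equals $|\lambda_{K}(\lambda_{n})|$, which is positive since $\omega_{n}$ is a $1$-form of the stratum; hence $\lambda_{K}$ is not identically zero on $L$, i.e.\ $A_{K}\not\supseteq L$, and therefore $\lambda_{K}(\lambda_{\infty})\neq 0$ because $\lambda_{\infty}\in L^{\circ}$. Thus all saddle connection lengths of $\omega_{n}$ stay in a fixed compact subset of $(0,\infty)$; as the geometry of $\omega_{n}$ is determined by these lengths (the core is triangulated by saddle connections, and its complement is a union of polar domains glued along the boundary of the core), the sequence $\omega_{n}$ subconverges in $\mathcal{H}(a,-b_{1},\dots,-b_{p})$ to some $\omega_{\infty}$ with $\res(\omega_{\infty})=\lambda_{\infty}$. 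This proves properness. A proper local homeomorphism to the connected complex manifold $L^{\circ}$ has image that is both open and closed, hence either empty (in which case $\mathcal{F}$ and every fibre over $L^{\circ}$ is empty and there is nothing to prove) or all of $L^{\circ}$, in which case it is a covering map; this finishes the argument.

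The crux is the properness step, i.e.\ ruling out that the $\omega_{n}$ escape to the boundary $\bar{\mathcal{H}}(a,-b_{1},\dots,-b_{p})\setminus\mathcal{H}(a,-b_{1},\dots,-b_{p})$; it rests on the same dichotomy as Lemma~\ref{lem:corWYSIWYG}, namely that a partial sum of residues vanishing identically on $L$ is never the period of a saddle connection of a $1$-form of the stratum lying over $L^{\circ}$, so the only residue coordinates allowed to vanish on $L^{\circ}$ are harmless. One could instead argue combinatorially on the real locus: by Proposition~\ref{prop:realgen} and Lemmas~\ref{lem:arbresdiff}--\ref{lem:arbresdiffdegen} it is enough to show that the number of decorated trees compatible with a sign function $\psi$ and having no edge supported on a subset of the zero locus of $\psi$ depends only on that zero locus; two such sign functions are linked by a chain of wall-crossings of the real arrangement induced on $L\cap\mathbb{R}\mathcal{R}_{p}$, each of which yields an explicit bijection of the corresponding sets of decorated trees by a surgery of trees of the type used in Section~\ref{sec:monodromie}. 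This route is more constructive but combinatorially heavier.
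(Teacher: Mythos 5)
Your main argument is correct in outline but takes a genuinely different route from the paper. The paper first reduces to fibers over \emph{real} residues via the contraction flow (Proposition~\ref{prop:realgen}), identifies such fibers with sets of decorated trees (Lemmas~\ref{lem:arbresdiff} and~\ref{lem:arbresdiffdegen}), and then compares adjacent chambers of the induced real arrangement on $\mathbb{R}B$ by an explicit wall-crossing bijection (again via the contraction flow through a nearby complex fiber), concluding by connectivity of the chamber graph; this is essentially the ``heavier'' combinatorial route you only sketch at the end. You instead argue globally: the locus $L^{\circ}$ of residue configurations lying on exactly the hyperplanes of $B$ is the complement of finitely many complex hyperplanes in $L$, hence connected; $\res$ is a local biholomorphism because the residues are local coordinates; and properness over $L^{\circ}$ upgrades it to a covering, so all fibers with the same resonance set have the same cardinality. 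This is cleaner and avoids trees entirely, but it concentrates all the content in the properness step, which you argue only informally: you need that a sequence $\omega_{n}$ in the stratum with $\res(\omega_{n})\to\lambda_{\infty}\in L^{\circ}$ and all saddle-connection lengths bounded away from $0$ (which you correctly deduce, since every period is a partial sum $\lambda_{K}$ with $A_{K}\notin B$, hence nonvanishing at $\lambda_{\infty}$) subconverges \emph{in the stratum}; this rests on the fact that the only degenerations are shrinking saddle connections (the core is cut by at most $\tfrac{p(p-1)}{2}$ saddle connections and the polar domains are determined by their boundaries, orders and residues), which is consistent with Lemma~\ref{lem:corWYSIWYG} and the WYSIWYG discussion but deserves a precise statement or citation (e.g.\ the compactness criteria of \cite{Ta} or \cite{CW}) rather than the phrase ``the geometry is determined by these lengths''. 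With that step made precise, your covering-space argument gives the theorem, and in fact also yields directly that the fiber cardinality is constant and the restricted map is an unramified cover over each stratum of the arrangement, whereas the paper's proof has the advantage of producing the explicit tree-level bijections that are reused later for the monodromy computations.
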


\begin{proof}
It follows from Proposition~\ref{prop:realgen}, Lemma~\ref{lem:arbresdiff} and~\ref{lem:arbresdiffdegen} that it suffices to prove that two real isoresidual fibers $\mathcal{F}$ and $\mathcal{F'}$ that belong to the same family of resonance hyperplanes but with different sign functions $\psi_{\mathcal{F}}$ and $\psi_{\mathcal{F'}}$ are in bijection. We denote by $B \subset \mathcal{R}_{p}$ the complex vector space given by the intersection of these resonance hyperplanes.  Its intersection $\mathbb{R}B$ with the space $\mathbb{R}\mathcal{R}_{p}$ of real configurations of residues is endowed with a hyperplane arrangement $\mathbb{R}\mathcal{A}_{p}^{B}$ given by the intersection with the resonance arrangement.  This hyperplane arrangement cuts out $\mathbb{R}B$ into chambers. Note that  the isoresidual fibers $\mathcal{F}$ and $\mathcal{F'}$ belong to different chambers.
% By definition, none of them lies in the real arrangement because they belong to the same resonance hyperplanes and these conditions have already been taken into account by defining $B$.
\newline
It follows from Lemma~\ref{lem:arbresdiff} and~\ref{lem:arbresdiffdegen}  that the number of elements of the fibers is constant in any chamber of $\mathbb{R}B$. We will prove that for any pair of chambers $\mathcal{C}$ and $\mathcal{C'}$ separated by a hyperplane,  these fibers are in bijection. We consider  real fibers $\mathcal{F}$ in $\mathcal{C}$ and $\mathcal{F}'$ in $\mathcal{C}'$ that are closed to each other. Then there exists a complex fiber $\mathcal{G}$ over a point of the same cell which is related to both $\mathcal{F}$ and $\mathcal{F}'$ by the contraction flow. The same argument as in Proposition~\ref{prop:realgen}  using the fact that the residues form a local coordinate system leads to the bijection of $\mathcal{F}$ with $\mathcal{F'}$.
% For most \Quentin{je ne comprends pas pourquoi most} fibers in a small enough neighborhood of $\mathcal{G}$ in the complex space $B$, the contraction flow  preserving the real direction and contracting a well-chosen direction provides a bijection between $\mathcal{G}$ and a real fiber of any of the two chambers $\mathcal{C}$ and $\mathcal{C'}$ \Quentin{Je ne pense pas que ca marche car les éléments de la WYSIWYG enlèvent des éléments dans la fibre. Je pense qu'il faut faire un argument au voisinage. En passant par l'arrangement complexe et en déformant} (see the proof of Proposition \ref{prop:realgen} for a detailed argument). 
Since every chamber can be joined to another by crossing finitely many line of the real arrangement, the number of elements of the fibers in any chamber of $\mathbb{R}B$ is the same. Therefore, the number of elements of an isoresidual fiber $\mathcal{F}$ only depends on the resonance hyperplanes $\mathcal{F}$ belongs to.
\end{proof}

Note that Theorem~\ref{thm:dependarrangement} is in fact a geometric version of the following combinatorial result. Two sets of decorated trees with different sign functions but with the same zero-set have the same number of elements.

\section{Counting decorated trees}
\label{sec:compter}

The main goal of this section is to prove Theorem~\ref{thm:numerofibre} computing the number of elements of isoresidual fibers.
% We prove in Section~\ref{sec:chambrereel} that this only depends  on the family of resonance hyperplanes the fiber belongs to. In Section~\ref{sec:castriviaux} we make the computation in some easy cases. 
The computation in the non-resonant case is done in Section~\ref{sec:nonresonance} and in some resonant cases in Section~\ref{sec:comptereson}.
 Since we prove in Section~\ref{sec:transarbrees} that this computation reduces to the  enumeration of decorated trees compatible with a sign function, this computation is combinatorial.
% 
% \subsection{From chambers of $\mathbb{R}\mathcal{A}_{p}$ to cells of $\mathcal{A}_{p}$}
% \label{sec:chambrereel}
% 
% 
% 
% \subsection{Some trivial cases}
% \label{sec:castriviaux}
% 
% 
% 
% Then, 

\subsection{Non-resonant case}
\label{sec:nonresonance}

We begin this section by the case of strata with two poles.

\begin{prop}\label{prop:trivial2}
For any partition $(a;-b_{1},-b_{2})$ of $-2$  and for any non identically vanishing sign function $\psi$, there is a unique decorated tree in $\mathcal{T}(b_{1},b_{2},\psi)$.
\end{prop}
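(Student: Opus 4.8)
The plan is to use the fact that a decorated tree on two vertices has essentially no freedom left once the compatibility conditions are imposed. First I would note that a tree on the vertex set $\{1,2\}$ is connected and hence has exactly one edge, joining vertex~$1$ to vertex~$2$; condition~(2) of Definition~\ref{def:merodecoratedtree} then forces exactly $2b_1-2$ half-edges at vertex~$1$ and $2b_2-2$ at vertex~$2$. So the only data still to be determined are the orientation of the single edge and the embedding of the resulting graph into the sphere.

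For the orientation, the unique edge separates $\{1,2\}$ into $I=\{1\}$ and $I^{\complement}=\{2\}$. Since $\psi$ is the sign function of a residue configuration with $\lambda_1+\lambda_2=0$ we have $\psi(1)=-\psi(2)$, and as $\psi$ is not identically zero these two signs are the two opposite nonzero signs. Condition~(3) of Definition~\ref{def:merodecoratedtree} then permits exactly one orientation of the edge, namely from the vertex carrying sign $-$ to the vertex carrying sign $+$. This simultaneously settles existence: the datum just described satisfies conditions~(1)--(3) of Definition~\ref{def:merodecoratedtree}, the number of half-edges at each vertex is even, and the parity requirements of Definition~\ref{def:abstractdecoratedtree} on half-edges lying between two edges are vacuous because there is only one edge.

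For uniqueness of the embedding, I would invoke the standard description of graphs embedded in the sphere: up to orientation-preserving homeomorphism such an embedding is recorded by the cyclic order of edge-ends and half-edges around each vertex. At each of the two vertices there is a single edge-end together with a collection of mutually indistinguishable half-edges, so there is exactly one admissible cyclic order, and hence exactly one embedded decorated tree. Combined with the previous paragraph, this gives $|\mathcal{T}(b_1,b_2,\psi)|=1$.

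I do not expect a real obstacle here; the single point deserving care is the last step, where one must argue that the notion of decorated tree genuinely does not separate configurations differing only by a permutation of identical half-edges (or by an orientation-reversing symmetry, which is harmless since each vertex's cyclic order is palindromic). This proposition is meant to anchor the inductive enumeration of $\mathcal{T}(b_1,\dots,b_p,\psi)$ carried out for general $p$ in the rest of the section.
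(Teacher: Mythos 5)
Your argument is correct and follows essentially the same route as the paper's proof: a tree on two labelled vertices consists of a single edge, whose orientation is forced by condition~(3) since $\psi(1)=-\psi(2)\neq 0$. Your additional remarks on the half-edge counts and the uniqueness of the spherical embedding (one edge-end plus indistinguishable half-edges at each vertex) simply make explicit what the paper leaves implicit, so there is nothing to correct.
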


\begin{proof}
A tree with two vertices is just a edge joining two vertices. The orientation of the edge goes from the vertex $v_{1}$ such that $\psi(1)<0$ to the vertex $v_{2}$ such that $\psi(v_{2})>0$.
\end{proof}

Note that Proposition~\ref{prop:trivial2} gives an alternative proof of the fact that any stratum $\mathcal{H}(a;-b_{1},-b_{2})$ is isomorphic to $\mathbb{C^{\ast}}$. The residue map gives an isomorphism between the stratum $\mathcal{H}(a;-b_{1},-b_{2})$ and  $\mathcal{R}_{2} \setminus \mathcal{A}_{2}$.\newline

We now focus on the space $\mathcal{R}_{p} \setminus \mathcal{A}_{p}$, which corresponds to the sign functions that are everywhere different from zero.  Since this is the generic situation, this computation will give the degree of the isoresidual cover. \newline
By Theorem~\ref{thm:dependarrangement} we can focus on configurations of real residues such that one is positive and all other are negative. Indeed, the fibers over such configurations belong to the complement of any resonance hyperplane.\newline

In the following, we denote by $d(b_{1},\dots,b_{p})$ the number of elements of $\mathcal{T}(b_{1},\dots,b_{p},\psi_{p})$ where sign function $\psi_{p}$ is such that for any nonempty strict subset $I$ of $\{1,\dots,p\}$, the sign $\psi(I)$ is positive if and only if  $1 \in I$, and is negative otherwise.
% In accordance with the previous notations, we have $a=-2+\sum_{j=1}^{p} b_{j}$. 
The rest of the subsection is mainly combinatorial.\newline

We first prove an  induction relation between various $d$ which corresponds to the operation of adding one simple pole with negative residue in the stratum.

\begin{prop}\label{prop:recurencedegre}
For any  partition $(a;-b_{1},\dots,-b_{p})$ of $-2$, the following relation holds 
\[ d(b_{1},\dots,b_{p},1) = (a+1) \cdot d(b_{1},\dots,b_{p})\,. \] 
\end{prop}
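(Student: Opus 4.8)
The plan is to prove the recurrence by a direct bijective argument: I want to exhibit, for each decorated tree in $\mathcal{T}(b_1,\dots,b_p)$, exactly $a+1$ decorated trees in $\mathcal{T}(b_1,\dots,b_p,1)$, and show every tree of the target set arises this way exactly once. The new pole $p_{p+1}$ is simple, so its vertex carries $2\cdot 1-2=0$ half-edges, i.e.\ it is a leaf with only one incident edge and no decorations. Since the sign function $\psi_{p+1}$ we use has $\psi(I)>0$ iff $1\in I$, the residue $\lambda_{p+1}$ is negative, so the vertex $p+1$ must be a \emph{source} of its unique edge: the edge points from $p+1$ towards the rest of the tree. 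Hence building a tree in $\mathcal{T}(b_1,\dots,b_p,1)$ amounts to choosing an old tree $\mathfrak{t}\in\mathcal{T}(b_1,\dots,b_p)$ and a place to attach this outgoing pendant edge.

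First I would make precise what "a place to attach" means. Attaching the leaf $p+1$ via an edge $e$ oriented away from it means inserting $e$ into the cyclic order of edges and half-edges around some vertex $v$ of $\mathfrak{t}$, subject to the parity constraint of Definition~\ref{def:abstractdecoratedtree}: the new edge is \emph{outgoing} at $v$, so relative to the edges already at $v$ it must sit with an even number of half-edges between it and any other outgoing-at-$v$ edge and an odd number between it and any incoming-at-$v$ edge. I would count these admissible insertion slots. A clean way: at vertex $v$ the $2b_v-2$ half-edges together with the incident edges are arranged around $v$; the edges of $\mathfrak{t}$ cut the surface into sectors, and "compatibility with $\psi$" already pins down the direction of every edge of $\mathfrak{t}$. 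The key local count is that the total number of valid insertion positions, summed over all vertices $v$ of $\mathfrak{t}$, equals $a+1$. I expect this to follow from a global Euler-characteristic / half-edge bookkeeping: $\mathfrak{t}$ has $p$ vertices, $p-1$ edges, and $\sum_j(2b_j-2)=2a+4-2p$ half-edges, and the parity conditions make exactly "half" of the gaps around the tree admissible for an outgoing edge — giving $((2a+4-2p)+2(p-1))/2 = a+1$ slots. Making this "half the gaps" statement rigorous, using that consecutive half-edges/edges around the tree alternate the parity obstruction, is the crux.

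Next I would verify that distinct choices of (old tree, slot) give distinct new trees — immediate, since deleting the leaf $p+1$ and its edge recovers $\mathfrak{t}$ and the slot it occupied — and that every $\mathfrak{t}'\in\mathcal{T}(b_1,\dots,b_p,1)$ is obtained: delete the leaf labelled $p+1$ (it is automatically a leaf with no half-edges, being a simple pole) and its unique edge $e$; what remains is an embedded directed tree on vertices $\{1,\dots,p\}$ with the right half-edge counts, the parity condition is preserved upon deletion, and the restricted sign function is exactly $\psi_p$ (here one uses that removing a subset $I$ with $p+1\notin I$ does not change the sign of partial sums among $\{1,\dots,p\}$, and that $\psi_{p+1}$ restricted to subsets of $\{1,\dots,p\}$ is $\psi_p$, so Condition~(3) of Definition~\ref{def:merodecoratedtree} transfers). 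Hence the map (old tree, slot)$\mapsto$(new tree) is a bijection, and counting slots gives $d(b_1,\dots,b_p,1)=(a+1)\,d(b_1,\dots,b_p)$.

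The main obstacle is the local/global slot count: one must argue carefully that around each vertex the admissible positions for a new \emph{outgoing} edge, interleaved with the existing edges and half-edges respecting the even/odd parity rule, sum over the whole tree to exactly $a+1$, independently of the shape of $\mathfrak{t}$ and of which edges point which way. I would handle this by walking once around the boundary of a small neighbourhood of $\mathfrak{t}$ in the sphere, reading off the cyclic sequence of half-edge-ends and edge-sides; the parity condition says this sequence, viewed as a sequence of $\pm$ "local horizontal directions", alternates in a controlled way, and the outgoing-edge slots are precisely the positions carrying one fixed sign. Counting those positions is then a matter of the length of the walk, which is $2(p-1)$ edge-sides plus $2a+4-2p$ half-edge-ends, i.e.\ $2a+2$ in total, exactly half of which — namely $a+1$ — are admissible.
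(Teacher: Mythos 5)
Your proposal is correct and follows essentially the same route as the paper: attach the new simple pole as a pendant leaf oriented toward the old tree, observe that the parity condition of Definition~\ref{def:abstractdecoratedtree} makes exactly half of the $2a+2$ corners (edge-ends plus half-edge-ends) admissible, and invert the surgery by deleting the leaf. Your boundary-walk formulation of the "half the corners" count is just a rephrasing of the paper's alternation argument along consecutive corners, so there is no substantive difference.
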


We will illustrate the construction of this proof in Example~\ref{ex:arrangement4} and give its geometric interpretation in Remark~\ref{rem:recurencedegre}. Moreover, given a decorated tree, a {\em corner} is the interval of a vertex between two consecutive markings (edges or half-edges) at a vertex.  
\begin{proof}
Let us fix a decorated tree $\mathfrak{t}$ of $\mathcal{T}(b_{1},\dots,b_{p},\psi_{p})$. We compute the number of decorated trees of $\mathcal{T}(b_{1},\dots,b_{p},1,\psi_{p+1})$ that can be obtain from $\mathfrak{t}$ by adding the vertex corresponding to the new simple pole. Since $\psi_{p+1}(p+1)<0$, the vertex that we add is connected to the rest of the graph by an edge going from this vertex to the vertex of~$\mathfrak{t}$. There are $2a+2$ corners on~$\mathfrak{t}$. Consider a corner following an edge pointing into the corresponding vertex. By the last condition of Definition~\ref{def:abstractdecoratedtree}, we can put the new vertex at this corner. For the next corner in the graph (which may be on an other vertex), this condition does not allow us to put the new vertex. Indeed if these two corner are separated by a half-edge, there would be an odd number of half-edges between two edges pointing in the same direction. If the new corner is on a new vertex, the direction of the edge is from this new vertex to the old one, hence we need an odd number of half-edges between this edge and the edge that we want to add. Now continuing, we see that we can put the new vertex at half of the corners.   Therefore, there are $a+1$ places to glue the leaf corresponding to the additional simple pole. Hence we get $(a+1)\cdot d(b_{1},\dots,b_{p})$ different decorated trees in $\mathcal{T}(b_{1},\dots,b_{p},1,\psi_{p+1})$.\newline
Conversely, this surgery is invertible. Indeed take a decorated tree $\mathcal{T}(b_{1},\dots,b_{p},1,\psi_{p+1})$. We can erase the leaf corresponding to the simple pole and get the decorated tree of $\mathcal{T}(b_{1},\dots,b_{p},\psi_{p})$. This  proves the relation.
% For every decorated tree of $\mathcal{T}(b_{1},\dots,b_{p},\psi_{p})$, there are $2a+2$ corners on the tree. Among them, $a+1$ are positive and $a+1$ are negative. The leaf corresponding to the additional simple pole should be glued on a positive corner (which will be separated into two positive corners, adding a negative corner at the end of the leaf). Therefore, there are $a+1$ places to glue the leaf corresponding to the additional simple pole. This surgery is clearly invertible so we get $(a+1)\cdot d(b_{1},\dots,b_{p})$ different decorated trees in $\mathcal{T}(b_{1},\dots,b_{p},1,\psi_{p+1})$.\newline
% Conversely, every decorated tree of the latter set appears in that way. Then, we can erase the leaf corresponding to the simple pole and get the decorated tree of distribution $b_{1},\dots,b_{p}$ it comes from. This proves the relation.
\end{proof}

\begin{ex}\label{ex:arrangement4}
 We continue Example~\ref{ex:arrangement} in the stratum $\mathcal{H}(4;-2,-2,-2)$. Consider the top differential pictured in the right of Figure~\ref{fig:arrangement1} and its associated graph pictured in the left of Figure~\ref{fig:arrangement2}. The five decorated trees obtained from this decorated tree by the operation of the proof of Proposition~\ref{prop:recurencedegre} are pictured in Figure~\ref{fig:arrangement4}. We obtain decorated trees corresponding to $1$-forms in the stratum $\mathcal{H}(4;-2,-2,-2,-1)$.
 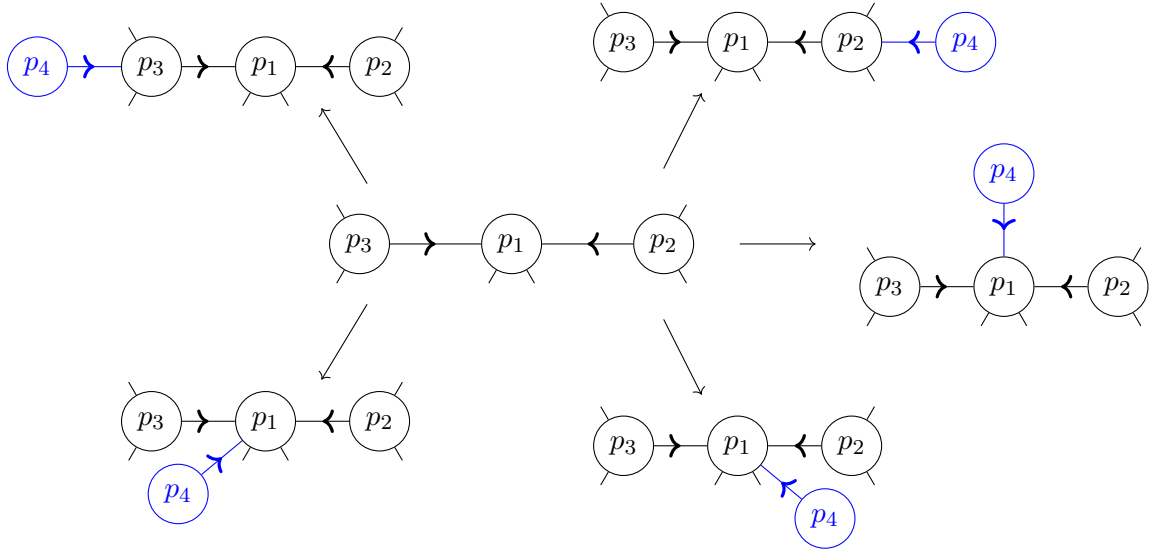
\begin{figure}[ht]
\begin{tikzpicture}[scale=1,decoration={
    markings,
    mark=at position 0.5 with {\arrow[very thick]{>}}}]
   
\node[circle,draw] (a) at  (-2,0) {$p_{3}$};
\node[circle,draw] (b) at  (0,0) {$p_{1}$};
\node[circle,draw] (c) at  (2,0) {$p_{2}$};
\draw [postaction={decorate}]  (a) --  (b);
\draw[postaction={decorate}] (c) -- (b);

\draw (a) -- ++(120:.6);
\draw (a) -- ++(240:.6);
\draw (c) -- ++(60:.6);
\draw(c) -- ++(-60:.6);
\draw(b) -- ++(-120:.6);
\draw (b) -- ++(-60:.6);

\draw[->] (3,0) -- (4,0);
\draw[->] (2,1) -- (2.5,2);
\draw[->] (-1.9,.8) -- (-2.5,1.8);
\draw[->] (2,-1) -- (2.5,-2);
\draw[->] (-1.9,-.8) -- (-2.5,-1.8);
%%%%%%%%%%%%%%%%%ù
\begin{scope}[shift=(-5:6.5)]
 \node[circle,draw] (a) at  (-1.5,0) {$p_{3}$};
\node[circle,draw] (b) at  (0,0) {$p_{1}$};
\node[circle,draw] (c) at  (1.5,0) {$p_{2}$};
\node[circle,draw,blue] (d) at  (0,1.5) {$p_{4}$};
\draw [postaction={decorate}]  (a) --  (b);
\draw[postaction={decorate}] (c) -- (b);
\draw[postaction={decorate},blue] (d) -- (b);

\draw (a) -- ++(120:.6);
\draw (a) -- ++(240:.6);
\draw (c) -- ++(60:.6);
\draw(c) -- ++(-60:.6);
\draw(b) -- ++(-120:.6);
\draw (b) -- ++(-60:.6);
\end{scope}

%%%%%%%%%%%%%%%%%ù
\begin{scope}[shift=(42:4)]
 \node[circle,draw] (a) at  (-1.5,0) {$p_{3}$};
\node[circle,draw] (b) at  (0,0) {$p_{1}$};
\node[circle,draw] (c) at  (1.5,0) {$p_{2}$};
\node[circle,draw,blue] (d) at  (3,0) {$p_{4}$};
\draw [postaction={decorate}]  (a) --  (b);
\draw[postaction={decorate}] (c) -- (b);
\draw[postaction={decorate},blue] (d) -- (c);

\draw (a) -- ++(120:.6);
\draw (a) -- ++(240:.6);
\draw (c) -- ++(60:.6);
\draw(c) -- ++(-60:.6);
\draw(b) -- ++(-120:.6);
\draw (b) -- ++(-60:.6);
\end{scope}

%%%%%%%%%%%%%%%%%ù
\begin{scope}[shift=(144:4)]
 \node[circle,draw] (a) at  (-1.5,0) {$p_{3}$};
\node[circle,draw] (b) at  (0,0) {$p_{1}$};
\node[circle,draw] (c) at  (1.5,0) {$p_{2}$};
\node[circle,draw,blue] (d) at  (-3,0) {$p_{4}$};
\draw [postaction={decorate}]  (a) --  (b);
\draw[postaction={decorate}] (c) -- (b);
\draw[postaction={decorate},blue] (d) -- (a);

\draw (a) -- ++(120:.6);
\draw (a) -- ++(240:.6);
\draw (c) -- ++(60:.6);
\draw(c) -- ++(-60:.6);
\draw(b) -- ++(-120:.6);
\draw (b) -- ++(-60:.6);
\end{scope}

%%%%%%%%%%%%%%%%%ù
\begin{scope}[shift=(-144:4)]
 \node[circle,draw] (a) at  (-1.5,0) {$p_{3}$};
\node[circle,draw] (b) at  (0,0) {$p_{1}$};
\node[circle,draw] (c) at  (1.5,0) {$p_{2}$};
\node[circle,draw,blue] (d) at  (-140:1.5) {$p_{4}$};
\draw [postaction={decorate}]  (a) --  (b);
\draw[postaction={decorate}] (c) -- (b);
\draw[postaction={decorate},blue] (d) -- (b);

\draw (a) -- ++(120:.6);
\draw (a) -- ++(240:.6);
\draw (c) -- ++(60:.6);
\draw(c) -- ++(-60:.6);
\draw(b) -- ++(-120:.6);
\draw (b) -- ++(-60:.6);
\end{scope}

%%%%%%%%%%%%%%%%%ù
\begin{scope}[shift=(-42:4)]
 \node[circle,draw] (a) at  (-1.5,0) {$p_{3}$};
\node[circle,draw] (b) at  (0,0) {$p_{1}$};
\node[circle,draw] (c) at  (1.5,0) {$p_{2}$};
\node[circle,draw,blue] (d) at  (-40:1.5) {$p_{4}$};
\draw [postaction={decorate}]  (a) --  (b);
\draw[postaction={decorate}] (c) -- (b);
\draw[postaction={decorate},blue] (d) -- (b);

\draw (a) -- ++(120:.6);
\draw (a) -- ++(240:.6);
\draw (c) -- ++(60:.6);
\draw(c) -- ++(-60:.6);
\draw(b) -- ++(-120:.6);
\draw (b) -- ++(-60:.6);
\end{scope}
 \end{tikzpicture}
 \caption{One of the decorated tree pictured in Figure~\ref{fig:arrangement2} and the operation of the proof of Proposition~\ref{prop:recurencedegre}} \label{fig:arrangement4}
\end{figure}
\end{ex}

Before to continue with the proof, we want to give the geometric interpretation of Proposition~\ref{prop:recurencedegre}.
\begin{rem}\label{rem:recurencedegre}
Given a $1$-form associated with a graph as in Proposition~\ref{prop:recurencedegre}. Adding a pole with a negative residue at the zero of order $a$ correspond to choose a positive prong at the zero and glue a flat cylinder along this prong (after deforming a bit the flat structure associated to this $1$-form). Hence the factor $(a+1)$ of Proposition~\ref{prop:recurencedegre} represents the prong-number of the zero of order $a$. This construction can be make precise using \cite{BCGGM2}. 
\end{rem}

We now prove that we can transfer some weight from one vertex to another without changing the number of decorated trees.

\begin{prop}\label{prop:changepoids}
For any  partition $(a;-b_{1},\dots,-b_{p}$) of $-2$, the number $d(b_{1},\dots,b_{p})$ only depends  on the number of poles $p$ and on the order~$a$.
\end{prop}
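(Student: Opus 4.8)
The plan is to reduce the statement to an elementary \emph{weight-transfer} invariance and then to iterate it. Since $a=\sum_j b_j-2$ is fixed, I would prove that for every index $j\neq 1$ with $b_j\geq 2$ one has
\[ d(b_{1},\dots,b_{p})=d(b_{1}+1,b_{2},\dots,b_{j}-1,\dots,b_{p})\,, \]
that is, moving one unit of pole order from a pole $j$ onto the distinguished pole $1$ does not change the count. Granting this, any composition $(b_{1},\dots,b_{p})$ of $a+2$ into $p$ positive parts can be brought, by transferring the excess of each part onto the first one, to the single normal form $(a-p+3,1,\dots,1)$; hence $d(b_{1},\dots,b_{p})=d(a-p+3,1,\dots,1)$, which depends only on $a$ and $p$. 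By Theorem~\ref{thm:dependarrangement} the choice of distinguished pole in $\psi_{p}$ is immaterial, so there is no loss in always transferring onto pole~$1$, and only this single type of transfer is needed.

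For the transfer itself I would work on the combinatorial side, using Lemma~\ref{lem:arbresdiff} to identify the fiber with $\mathcal{T}(b_{1},\dots,b_{p},\psi_{p})$. Increasing $b_{1}$ and decreasing $b_{j}$ by one amounts to erasing two half-edges at the vertex~$j$ and creating two at the vertex~$1$, preserving the total number $2a+2$ of markings around the zero. I would set up a move that reattaches a pair of consecutive half-edges while respecting the cyclic (planar) order, the orientation rule of Definition~\ref{def:merodecoratedtree} (every edge points toward the subtree containing pole~$1$), and the parity conditions of Definition~\ref{def:abstractdecoratedtree}.

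The hard part is that this invariance is genuinely \emph{global}: it cannot be realized by a move fixing the underlying plane tree (the ``skeleton''). This is already visible for $a=4$, $p=3$: the stratum $\mathcal{H}(4,-2,-2,-2)$ of Example~\ref{ex:arrangement}, i.e.\ $(b_{1},b_{2},b_{3})=(2,2,2)$, has its four trees distributed as $(2,1,1)$ among the three skeletons, whereas for $(b_{1},b_{2},b_{3})=(4,1,1)$ the same number four is distributed as $(4,0,0)$; no skeleton-preserving bijection can match these. Thus the transfer map must be allowed to move edges of the tree, and the core of the argument is to organise these skeleton changes into a single bijection. I would control this by filtering the trees according to the edge incident to the pole being modified and by a ``sliding'' induction that pushes a half-edge across one edge at a time, verifying at each step that the orientation and corner-parity constraints persist.

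As an independent check (and possible alternative proof) I would use the corner count from the proof of Proposition~\ref{prop:recurencedegre}: for a fixed skeleton $S$, the half-edges at a vertex $k$ of valence $\deg_{k}$ distribute into its corners with prescribed parities, contributing a factor $t_{k}^{\,1+e_{k}(S)}/(1-t_{k})^{\deg_{k}(S)}$, where $e_{k}(S)$ is half the number of corners of $k$ bounded by two edges of opposite direction. Summing over skeletons gives
\[ F(t_{1},\dots,t_{p})=\sum_{b_{1},\dots,b_{p}\geq 1} d(b_{1},\dots,b_{p})\prod_{k}t_{k}^{b_{k}}=\sum_{S}\prod_{k}\frac{t_{k}^{\,1+e_{k}(S)}}{(1-t_{k})^{\deg_{k}(S)}}\,, \]
and the proposition is equivalent to $F=t_{1}\cdots t_{p}\sum_{a} d_{a,p}\,h_{a+2-p}(t_{1},\dots,t_{p})$, i.e.\ to the assertion that the coefficient of each monomial depends only on its total degree. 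Proving this symmetric-function identity directly would be an alternative, but it repackages the same global phenomenon, so I expect the bijective route to be the cleaner one to write down.
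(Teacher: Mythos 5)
Your reduction scheme is exactly the one the paper uses: prove the single weight transfer $d(b_{1},\dots,b_{p})=d(b_{1}+1,b_{2},\dots,b_{j}-1,\dots,b_{p})$ and iterate it to the normal form $(a+3-p,1,\dots,1)$. Your two supporting observations are also correct: the count really is independent of which pole is distinguished, and the transfer cannot preserve the underlying plane tree (your $(2,1,1)$ versus $(4,0,0)$ skeleton count for $\mathcal{H}(4,-2,-2,-2)$ checks out, and so does your generating-function identity). But there is a genuine gap: the transfer bijection itself, which is the entire content of Proposition~\ref{prop:changepoids}, is never constructed. Worse, the implementation you sketch --- ``a sliding induction that pushes a half-edge across one edge at a time'' --- has a structural defect. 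An elementary slide from a vertex to its parent changes the weight of the \emph{parent}, and the parent depends on the tree; so the one-step slide is not a map between two fixed sets of decorated trees. Each tree travels through its own chain of intermediate sets $\mathcal{T}(\dots,b_{j'}+1,\dots,b_{j}-1,\dots,\psi_{p})$ determined by its own path from $j$ to the root, so bijectivity of the composite cannot be deduced by composing bijections of elementary steps; injectivity and surjectivity of the total map are exactly what remain unproved. Moreover, by your own skeleton argument, even the elementary slide must carry along an arbitrary subtree pinched between the two half-edges being moved, so the single step is no easier than the full transfer: the induction buys nothing.

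For comparison, the paper's proof is a one-shot transplant that bypasses all intermediate vertices. In a tree of $\mathcal{T}(b_{1},\dots,b_{p},\psi_{p})$ every edge is oriented toward the root (vertex $1$), so a vertex $i\geq 2$ with $b_{i}\geq 2$ has a unique outgoing edge $e$. The operation $s$ removes the first two half-edges met counterclockwise from $e$ at vertex $i$, \emph{together with the whole portion $P$ of the tree lying between them}, and reglues the block (first half-edge, then $P$, then second half-edge) in the first admissible corner of the root; since the block is bounded on both sides by half-edges, the parity constraints of Definition~\ref{def:abstractdecoratedtree} are preserved, and the inverse is immediate --- at the root, locate the first two half-edges after the direction of $i$ and move the block back to $i$. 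Carrying $P$ along is precisely the skeleton change your example shows is unavoidable, and doing it in one jump to the root is what makes invertibility, hence the equality of counts, transparent. As it stands, your proposal identifies the right statement to prove and the right obstacle, but omits the construction that constitutes the proof; the generating-function reformulation, as you note yourself, only restates the problem.
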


\begin{proof}
We consider the set of decorated trees $\mathcal{T}(b_{1},\dots,b_{p},\psi_{p})$ where $\psi_{p}$ is the standard sign function. We suppose that there is a label $i \geq 2$ such that $b_{i} \geq 2$. Note first that since the sign of every proper subset $I$ is positive if and only if  $1 \in I$, the decorated trees are rooted trees where the root is the vertex with the label $1$ and every edge is oriented in the direction of the root.\newline
 We now define an operation $$s\colon \mathcal{T}(b_{1},\dots,b_{p},\psi_{p}) \to \mathcal{T}(b_{1}+1,b_{2},\dots, b_{i-1}, b_{i}-1, b_{i+1} \dots,b_{p},\psi_{p})\,.$$ 
 In the first step, remove the two first half-edge of vertex $i$ next to the edge going to the root in the counterclockwise order and the whole portion~$P$ of tree between them. In the second step, glue in the first next corner on the root the first half-edge, then the portion~$P$ of the graph and finally the last half-edge. This operation is pictured in Figure~\ref{fig:operations}.
 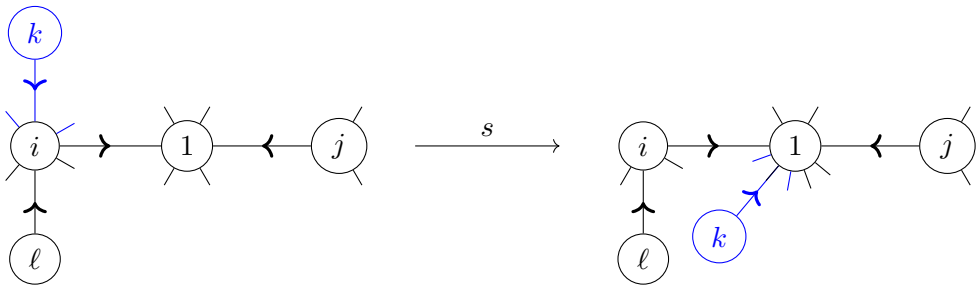
\begin{figure}[ht]
\begin{tikzpicture}[scale=1,decoration={
    markings,
    mark=at position 0.5 with {\arrow[very thick]{>}}}]
   
\node[circle,draw] (a) at  (-2,0) {$i$};
\node[circle,draw] (b) at  (0,0) {$1$};
\node[circle,draw] (c) at  (2,0) {$j$};
\node[circle,draw,blue] (d) at  (-2,1.5) {$k$};
\node[circle,draw] (e) at  (-2,-1.5) {$ \ell$};
\draw [postaction={decorate}]  (a) --  (b);
\draw[postaction={decorate}] (c) -- (b);
\draw [postaction={decorate},blue]  (d) --  (a);
\draw[postaction={decorate}] (e) -- (a);

\draw[blue] (a) -- ++(130:.6);
\draw (a) -- ++(230:.6);
\draw[blue] (a) -- ++(30:.6);
\draw (a) -- ++(-30:.6);
\draw (c) -- ++(60:.6);
\draw(c) -- ++(-60:.6);
\draw(b) -- ++(-120:.6);
\draw (b) -- ++(-60:.6);
\draw(b) -- ++(120:.6);
\draw (b) -- ++(60:.6);

\draw[->] (3,0) -- node[above] {$s$} (4.9,0);

\begin{scope}[xshift=8cm]
\node[circle,draw] (a) at  (-2,0) {$i$};
\node[circle,draw] (b) at  (0,0) {$1$};
\node[circle,draw] (c) at  (2,0) {$j$};
\node[circle,draw,blue] (d) at  (-1,-1.2) {$k$};
\node[circle,draw] (e) at  (-2,-1.5) {$ \ell$};
\draw [postaction={decorate}]  (a) --  (b);
\draw[postaction={decorate}] (c) -- (b);
\draw [postaction={decorate},blue]  (d) --  (b);
\draw[postaction={decorate}] (e) -- (a);

\draw[blue] (b) -- ++(-160:.6);
\draw (b) -- ++(230:.6);
\draw[blue] (b) -- ++(-100:.6);
\draw (a) -- ++(-30:.6);
\draw (c) -- ++(60:.6);
\draw(c) -- ++(-60:.6);
\draw(b) -- ++(-70:.6);
\draw (b) -- ++(-40:.6);
\draw(b) -- ++(120:.6);
\draw (b) -- ++(60:.6);
\draw(a) -- ++(-120:.6);
\end{scope}

 \end{tikzpicture}
 \caption{The operation $s$ on some decorated tree with the moved part in blue} \label{fig:operations}
\end{figure}

The operation $s$ is clearly invertible. Indeed, we just have to consider the first corners of the root after the vertex $i$ and to move back to $i$ the first two half-edges and the portion of the graph between them. 
% At the root, edges have the same orientation so the number of half-edges between them is always even. 
Thus, the operation~$s$ gives a bijection between the set of decorated trees $\mathcal{T}(b_{1},\dots,b_{p},\psi_{p})$ to the set $\mathcal{T}(b_{1}+1,b_{2},\dots, b_{i-1}, b_{i}-1, b_{i+1} \dots,b_{p},\psi_{p})$.\newline
By performing the operation $s$ a finite number of steps, we get a bijection between the set of decorated trees of any $\mathcal{T}(b_{1},\dots,b_{p},\psi_{p})$ to the set $\mathcal{T}(a+3-p,1,\dots,1,\psi_{p})$.
\end{proof}

The latter proposition reduces the computation of the degree of the isoresidual cover to the case of strata with only one pole of order greater or equal to~$2$ and several simple poles. We now prove Theorem~\ref{thm:numerofibre} by induction on the number of simple poles.

\begin{proof}[Proof of Theorem \ref{thm:numerofibre}]
According to Proposition~\ref{prop:changepoids} the number of decorated trees in $\mathcal{T}(b_{1},\dots,b_{p},\psi_{p})$ is equal to the set $\mathcal{T}(a+3-p,1,\dots,1,\psi_{p})$, where $a=-2 + \sum_{i} b_{i}$ and there are $p-1$ simple poles. The enumeration of the elements in $\mathcal{T}(a+3-p,1,\dots,1,\psi_{p})$ is obtained by induction on the number of simple pole. According to Proposition~\ref{prop:trivial2}, if $p=2$ there is a unique element in $\mathcal{T}(a+1,1,\psi_{p})$. Now  using Proposition~\ref{prop:recurencedegre} we get that for every $p\geq3$ the following equality holds  
$$ d(a+3-p,1,\dots,1) =  d(a+1,1) \cdot\prod_{i=0}^{2-p}(a+i) \,.$$ This implies that for any $p \geq 2$ the following equation holds  $$d(a+3-p,1,\dots,1)=\frac{a!}{(a+2-p)!} \,.$$ 

Finally, according to Lemma~\ref{lem:arbresdiff} and Theorem~\ref{thm:dependarrangement}, the isoresidual fiber in the stratum $\mathcal{H}(a,-b_{1},\dots,-b_{p})$ over the non-resonant space $\mathcal{R}_{p} \setminus \mathcal{A}_{p}$ is in bijection with the set of decorated trees of $\mathcal{T}(b_{1},\dots,b_{p},\psi_{p})$. Therefore, these generic isoresidual fibers have $\frac{a!}{(a+2-p)!}$ elements. This is the degree of the isoresidual cover.
\end{proof}

\subsection{Resonant case}\label{sec:comptereson}

We first treat the cases which lie in the intersection of many hyperplanes. The first part of the proposition is well-know, while the second part was proven in Proposition~2.3 of \cite{CC}. Even if our proof is essentially the same as the original, our method allows to clarify the original proof.

\begin{prop}\label{prop:trivial}
For any  stratum $\mathcal{H} (a;-b_{1},\dots,-b_{p})$ with $p\geq2$
\begin{enumerate}
 \item the isoresidual fiber at the origin is empty,
 \item the degree above the intersection of the $p-2$ hyperplanes $\lambda_{i} = 0$ for $i\in \{2,\dots,p-1\}$ is 
 $$ (p-2)! \cdot \prod_{i=2}^{p-1} (b_{i}-1) \,.$$
\end{enumerate}
\end{prop}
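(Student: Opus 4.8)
For part (1), the plan is to use the decorated-tree description of isoresidual fibers over real configurations. By Proposition~\ref{prop:realgen} and Theorem~\ref{thm:dependarrangement}, the fiber over the origin has the same cardinality as a real fiber lying in exactly the same family of resonance hyperplanes, which for the origin is \emph{all} of them: the sign function $\psi$ vanishes on every nonempty strict subset $I$. By Lemma~\ref{lem:arbresdiffdegen}, such a fiber is in bijection with the set of decorated trees in $\mathcal{T}(b_{1},\dots,b_{p},\psi)$ having no edge at all (since every edge would separate the vertices into $I$ and $I^{\complement}$ with $\psi(I)=0$). But a decorated tree on $p\geq 2$ labelled vertices with no edges is disconnected, hence not a tree; so there are none, and the fiber is empty. (Equivalently one can invoke that any edge of a compatible tree needs $\psi(I)<0$ on one side, which is impossible here.)

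For part (2), I would again pass to a convenient real fiber. Choose real residues with $\lambda_{i}=0$ for $i\in\{2,\dots,p-1\}$, and with $\lambda_{1}>0$, $\lambda_{p}<0$ generic so that no further resonance holds; then the sign function $\psi$ is supported exactly on subsets $I$ that contain exactly one of $\{1,p\}$, with $\psi(I)=+$ if $1\in I$ (and $p\notin I$) and $\psi(I)=-$ if $p\in I$ (and $1\notin I$), and $\psi(I)=0$ whenever $I\subseteq\{2,\dots,p-1\}$ or $I\supseteq\{1,p\}$. By Lemma~\ref{lem:arbresdiffdegen} the fiber is in bijection with the decorated trees $\mathfrak{t}\in\mathcal{T}(b_{1},\dots,b_{p},\psi)$ with \emph{no} edge separating the vertices into $I,I^{\complement}$ with $\psi(I)=0$. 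The orientation condition (Condition (3) of Definition~\ref{def:merodecoratedtree}) forces every edge to point from its $\psi<0$ side towards its $\psi>0$ side; together with the non-degeneracy requirement this means each edge must separate $1$ from $p$ on the two sides, and the orientations are consistent only if the tree is a path: vertices $1$ and $p$ are the two leaves at the ends, all $p-2$ intermediate vertices $2,\dots,p-1$ lie in the interior in some order, each carrying $2b_{i}-2$ half-edges, and every edge points towards the $1$-end. So the trees are counted by (a) the $(p-2)!$ linear orders of the intermediate vertices along the path, and (b) for each intermediate vertex of order $b_{i}$, the number of embedded decorated-tree structures compatible with its two incident edges being consecutive-or-not: since a vertex of order $b_i$ has $2b_i-2$ half-edges and exactly one incoming and one outgoing edge (opposite directions, hence an odd number of half-edges on each of the two arcs between them), the number of admissible placements is $b_{i}-1$, by the same parity count used in the proof of Proposition~\ref{prop:recurencedegre}.

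Multiplying, the count is $(p-2)!\cdot\prod_{i=2}^{p-1}(b_{i}-1)$, as claimed. The main obstacle I anticipate is the careful bookkeeping in step (b): verifying that an interior vertex of the path, with a fixed incoming and outgoing edge in opposite directions and $2b_i-2$ half-edges, admits exactly $b_i-1$ cyclic arrangements satisfying the even/odd half-edge parity conditions of Definition~\ref{def:abstractdecoratedtree} — and that no global embedding obstruction couples the choices at different vertices (so that the count genuinely factors as a product). This is a direct parity/rotation argument analogous to the one already carried out for Proposition~\ref{prop:recurencedegre}, so it should go through, but it is where the real content lies; the rest is the now-routine reduction to enumerating compatible decorated trees.
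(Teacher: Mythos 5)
Your proposal is correct and follows essentially the same route as the paper: part (1) is the observation that a tree on $p\geq 2$ vertices has an edge while compatibility forces a nonzero partial sum across any edge, and part (2) reduces via the decorated-tree lemmas to counting chains from vertex $1$ to vertex $p$, giving $(p-2)!$ orderings times $b_i-1$ half-edge placements at each intermediate vertex. Your extra parity bookkeeping for the $b_i-1$ count just makes explicit what the paper states directly.
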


Note that any locus which is given by the intersection of $p-2$ hyperplanes can be obtained by intersection of hyperplanes of the form $r_{i} = 0$. Hence the second part of Proposition~\ref{prop:trivial} gives the degree above all these loci.

\begin{proof}
Given the  partition $(a;-b_{1},\dots,-b_{p})$ of $-2$ with $p\geq2$, the first part of the proposition is equivalent to the fact that the set of decorated trees of type $\mu$ compatible with the identically zero sign function is empty. Note that a  decorated tree is connected, hence has edges since $p\geq2$. Thus the sign of at least one subset of the labels should be nonzero.

For the second part, using Lemma~\ref{lem:corWYSIWYG} and~\ref{lem:arbresdiffdegen}, it suffices to count the number of decorated trees compatible with the sign function $\psi$ such that $\psi(1) = +$ and $\psi(i)=0$ for $i\in \{2,\dots,p-1\}$ such that there are no edge separating a vertex indexed by $i$ to the rest of the graph. This implies that $\mathfrak{t}$ is a chain connecting the vertex $1$ to the vertex $p$. The position of the other labels is arbitrary, which gives $|S_{p-2}|$ possibilities. Moreover, at each vertex of label $i\in \{2,\dots,p-1\}$, there are $b_{i}-1$ possible positions for the half-edges.
\end{proof}

We now compute the number of elements of isoresidual fibers that belong to exactly one resonance hyperplane. This will be extensively use in Section~\ref{sec:monodromie} to understand the monodromy of the isoresidual fibration.  Such a computation for fibers in the intersection of several hyperplanes is an essentially algorithmic problem for which there does not appear to be a general formula.\newline

In the following, we consider a stratum $\mathcal{H}(a,-b_{1},\dots,-b_{p})$ with $\sum_{j=1}^{p} b_{j} = a+2$ and an isoresidual fiber $\mathcal{F}$ that belongs to exactly one resonance hyperplane $A_{I}$ corresponding to a partition $I \cup I^{\complement}$ of $\{1,\dots,p\}$. Recall from Definition~\ref{def:resonanceintro} that the hyperplane $A_{I}$ is given by the equation $\sum_{j \in I} \lambda_{j} = 0$ in the residual space.\newline

As in the generic case, we use Theorem~\ref{thm:dependarrangement} to reduce the problem to the case of real fibers. Moreover, according to Lemma~\ref{lem:arbresdiffdegen}, given a subset $I$ of $\{1,\dots,p\}$ we need to understand the number of distinct decorated trees which have an edge cutting them into two trees with vertices labeled by $I$ and $I^{\complement}$. The enumeration of decorated trees corresponding to elements of the fibers involves the number of poles and the resonance degree of each part of the partition. Recall from Definition~\ref{def:resonanceintro} that given a partition $\mu=(a;-b_{1},\dots,-b_{p})$ of~$-2$ and a subset $I$ of $\{1,\dots,p\}$, the  resonance degree is   $d_{I}=-1+\sum_{j \in I} b_{j}$ and $c_{I}$ is the cardinal of~$I$.

\begin{prop}\label{prop:distributiondeg}
For any  partition $\mu=(a;-b_{1},\dots,-b_{p})$ of~$-2$, any sign function $\psi$ which is nonzero for any nonempty strict subset of $\{1,\dots,p\}$ and any nontrivial partition $I \cup I^{\complement}$ of $\{1,\dots,p\}$, there are 
$$\frac{d_{I}!\, (a-d_{I})!}{(d_{I}+1-c_{I})! \, (a+1-p+c_{I}-d_{I})!}$$ decorated trees in $\mathcal{T}(b_{1},\dots,b_{p},\psi)$ in which an edge cuts out the tree according to the partition $I \cup I^{\complement}$ of the vertices.
\smallskip
\par
Moreover, if neither $I$ or $I^{\complement}$ are singletons, these decorated trees are divided into 
$$\frac{(d_{I}-1)!\, (a-d_{I}-1)!}{(d_{I}+1-c_{I})! \, (a+1-p+c_{I}-d_{I})!}$$ classes of $d_{I}(a-d_{I})$ elements in which the subtrees corresponding to $I$ and $I^{\complement}$ coincide.\newline
If $I^{\complement}$ is a singleton and $p \geq 3$, the $\frac{d_{I}!}{(d_{I}+2-p)!}$ decorated trees are divided into $\frac{(d_{I}-1)!}{(d_{I}+2-p)!}$ classes of $d_{I}$ elements.\newline
If $p=2$ and both $I$ and $I^{\complement}$ are singletons, there is only one decorated tree. 
\end{prop}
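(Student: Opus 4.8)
The plan is to reduce the enumeration to a counting problem about decorated trees where one prescribed edge $e$ is marked, and then to carry out that count by combining the surgery of Proposition~\ref{prop:recurencedegre} with the weight-transfer bijection of Proposition~\ref{prop:changepoids}. First I would normalize the sign function. Since $\psi$ is nonzero on every nonempty strict subset and the decorated tree has an edge separating the vertices according to $I\cup I^{\complement}$, after possibly swapping $I$ and $I^{\complement}$ we may assume $\psi(I)<0$ and $\psi(I^{\complement})>0$, so the marked edge $e$ is oriented from the $I$-subtree to the $I^{\complement}$-subtree. By Theorem~\ref{thm:dependarrangement}, the number of such trees depends only on the underlying data, so I would choose the most convenient $\psi$: one whose restriction to the vertices of $I$ is again a ``standard'' sign function (one vertex of $I$ plays the role of root for that side, every edge inside oriented toward it) and likewise for $I^{\complement}$. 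This turns a decorated tree with marked edge $e$ into exactly a pair: a decorated tree on the vertex set $I$, standard, together with a decorated tree on $I^{\complement}$, standard, together with the gluing data of how the two half-edge patterns at the two endpoints of $e$ interleave.

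The key observation is that the $I$-side, once we forget the $I^{\complement}$-side, is a decorated tree for a stratum with $c_I$ poles whose orders sum to $d_I+1$; the total number of half-edges on that side is $2\sum_{j\in I}b_j - 2 = 2d_I$. By Proposition~\ref{prop:changepoids} the count of standard decorated trees on $c_I$ vertices with order-sum giving zero-of-order $d_I'$ depends only on $c_I$ and on that order; combined with the closed formula from the proof of Theorem~\ref{thm:numerofibre} (here applied to the ``sub-stratum''), the number of standard decorated trees on $I$ is $\dfrac{d_I!}{(d_I+1-c_I)!}$, and symmetrically the number on $I^{\complement}$ is $\dfrac{(a-d_I)!}{(a+1-p+c_I-d_I)!}$ using $d_{I^{\complement}}=a-d_I$ and $c_{I^{\complement}}=p-c_I$. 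The remaining factor must account for the interleaving of the two sides across the edge $e$: at the two endpoints of $e$ the half-edges are distributed on either side of $e$, and the parity conditions of Definition~\ref{def:abstractdecoratedtree} together with the ``standardness'' of each side force this interleaving to be rigid — there is a unique way to reassemble. I would verify this last point carefully: it is the analogue of the computation in the proof of Proposition~\ref{prop:recurencedegre} that exactly half the corners are admissible, and here, because the orientations of the half-edges adjacent to $e$ on each side are already pinned down (every edge inside a side points toward that side's root), the gluing is determined. This gives the first formula $\dfrac{d_I!\,(a-d_I)!}{(d_I+1-c_I)!\,(a+1-p+c_I-d_I)!}$.

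For the second part, I would count instead \emph{unordered-position} classes: two decorated trees lie in the same class when the abstract subtrees on $I$ and on $I^{\complement}$ coincide, and they differ only by where along the ``spine'' the edge $e$ attaches on each side. On the $I$-side, having fixed the abstract subtree, the marked edge $e$ must be attached at one of the corners of one distinguished vertex (the one of $I$ incident to $e$) — but more precisely, re-running the surgery count of Proposition~\ref{prop:recurencedegre} with one leaf already designated shows there are $d_I$ admissible corners on that side, and symmetrically $a-d_I$ on the $I^{\complement}$-side, giving $d_I(a-d_I)$ trees per class and hence $\dfrac{(d_I-1)!\,(a-d_I-1)!}{(d_I+1-c_I)!\,(a+1-p+c_I-d_I)!}$ classes — provided neither side is a single vertex, so that ``the abstract subtree'' genuinely has the full complement of corners available. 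When $I^{\complement}=\{p\}$ is a singleton, the $I^{\complement}$-side is a single leaf with $2b_p-2=2(a-d_I)+\cdots$ — here $d_{I^{\complement}}=b_p-1=a-d_I$, but since a leaf contributes no extra freedom in $e$'s placement on that side, the factor $(a-d_I)$ disappears: one gets $\dfrac{d_I!}{(d_I+2-p)!}$ total trees (note $c_{I^{\complement}}=1$, so the substratum on $I$ has $p-1$ poles, whence the exponent $d_I+2-p$) in classes of size $d_I$, i.e.\ $\dfrac{(d_I-1)!}{(d_I+2-p)!}$ classes. The degenerate case $p=2$ is Proposition~\ref{prop:trivial2}.

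The main obstacle, and the step I would spend the most care on, is the rigidity of the cross-edge gluing: showing that the interleaving of half-edges at the two endpoints of $e$ contributes a factor of exactly $1$ (not a binomial coefficient), which relies on the orientation of every internal edge of each side being forced toward that side's root, hence the local horizontal directions at $e$'s endpoints being determined. Everything else is bookkeeping with the already-established surgeries and the closed formula from Theorem~\ref{thm:numerofibre}.
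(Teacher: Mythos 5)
Your argument is essentially the paper's own proof: you cut the tree at the distinguished edge, count each side by the generic-fiber formula of Theorem~\ref{thm:numerofibre} applied to the sub-stratum in which the cut edge plays the role of an extra simple pole (giving the factors $\frac{d_I!}{(d_I+1-c_I)!}$ and $\frac{(a-d_I)!}{(a+1-p+c_I-d_I)!}$), observe that the regluing along the marked edge is unique, and obtain the second part by counting the $d_I$ and $a-d_I$ admissible attachment corners, with the singleton case treated as an exceptional symmetry. The only looseness — the appeal to Theorem~\ref{thm:dependarrangement} to normalize $\psi$ (that theorem concerns fiber cardinalities, and no normalization is actually needed since the splitting works verbatim for every nonvanishing $\psi$), and the ambiguity of describing each side as a stratum with $c_I$ poles while quoting the count $\frac{d_I!}{(d_I+1-c_I)!}$ that belongs to the side capped by the extra simple pole — does not affect the substance or the final formulas.
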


\begin{proof}
Let us consider a decorated tree $\mathfrak{t}$ in $\mathcal{T}(b_{1},\dots,b_{p},\psi_{p})$ that has an edge dividing the set of vertices of $\mathfrak{t}$ into $I$ and $I^{\complement}$. Let us delete this edge to obtain two trees $\mathfrak{t}_{I}$ and $\mathfrak{t}_{I^{\complement}}$. For the subtree $\mathfrak{t}_{I}$, we replace the deleted edge by a leaf with a vertex of degree one to make a new tree $\tilde{\mathfrak{t}}_{I}$. The orientation of this new edge is the same as the one of the deleted edge. In this way,  $\tilde{\mathfrak{t}}_{I}$ is a decorated tree. We define a sign function $\psi_{I}$ which coincide with~$ \psi$ on the subsets of $I$ and for the subsets containing the new label $\psi_{I}$ coincide with the sign of $\psi$ on the set where we replace this label by $I^{\complement}$ . We define similarly a tree $\tilde{\mathfrak{t}}_{I^{\complement}}$ starting from $\mathfrak{t}_{I^{\complement}}$. Note that the sign function restricted to both $\tilde{\mathfrak{t}}_{I}$  and $\tilde{\mathfrak{t}}_{I^{\complement}}$  does not vanish on any nonempty proper subset of the set of its vertices. Hence according to Theorem~\ref{thm:numerofibre}, there are $\frac{d_{I}!}{(d_{I}+1-c_{I})!}$ of such~$\tilde{\mathfrak{t}}_{I}$.\newline
Reciprocally, we can choose both $\tilde{\mathfrak{t}}_{I}$  and $\tilde{\mathfrak{t}}_{I^{\complement}}$ independently of each other.  From these decorated tree, we can recover the tree $\mathfrak{t}$ in a unique way. Hence the number of decorated trees is given by multiplying the number of decorated trees   $\tilde{\mathfrak{t}}_{I}$  with the number of decorated trees~$\tilde{\mathfrak{t}}_{I^{\complement}}$. This proves the first claim of the proposition.
\smallskip
\par
We now treat the second part of the proposition.
In both of $\tilde{\mathfrak{t}}_{I}$  and $\tilde{\mathfrak{t}}_{I^{\complement}}$, we erase the leaf corresponding to the special leaf. By doing this operation we loose the information of the location of this leaf. If the resonance degree of $I$ is $d_{I}$, there are $d_{I}$ corners from which we could remove the leaf. This gives a total of $d_{I}(a-d_{I})$ choices, up to isotopy in the sphere, once we fix the shape of each subtree where  the leaf has been removed. If $I$ or~$I^{\complement}$ is a singleton, the corner on which the leaf is glued does not matter since in this case we have an exceptional symmetry. 
\end{proof}

%  Geometrically, Proposition \ref{prop:distributiondeg} gives the number of decorated trees with the data of a fiber over a resonance hyperplane that does not correspond to a $1$-form on $\mathbb{CP}^{1}$ but to an element of the WYSIWYG compactification. 
%  This element has two components, which are obtained by shrinking the loop dividing the set of poles into $I$ and $I^{\complement}$. 
 Note that in the setting of the proposition, if neither $I$ nor $I^{\complement}$ are singletons, there are $d_{I}(a-d_{I})$ decorated trees which  correspond to the same element in WYSIWYG. The resonance hyperplane belongs to the ramification of locus of the isoresidual cover extended to the WYSIWYG of the stratum.  This will be illustrate in Example~\ref{ex:arrangement5}.

We now compute the degree of the isoresidual fibration above a unique resonance hyperplane, thus proving Proposition~\ref{prop:degintro}.
\begin{cor}\label{cor:elementresonance}
The number of elements of an isoresidual fiber $\mathcal{F}$ that belongs to exactly one resonance hyperplane $A_{I}$ is $$\frac{a!}{(a+2-p)!}-\frac{d_{I}!\, (a-d_{I})!}{(d_{I}+1-c_{I})! \, (a+1-p+c_{I}-d_{I})!}\,.$$
\end{cor}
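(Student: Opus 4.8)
The plan is to assemble the reduction and enumeration results established above, the corollary being essentially a bookkeeping statement once those are in place. First I would use Theorem~\ref{thm:dependarrangement} together with Proposition~\ref{prop:realgen} to replace $\mathcal{F}$ by an isoresidual fiber of the same cardinality over a configuration of \emph{real} residues that lies on $A_{I}$ and on no other resonance hyperplane. Thus we may assume $\psi_{\mathcal{F}}$ is a sign function with $\psi_{\mathcal{F}}(I)=\psi_{\mathcal{F}}(I^{\complement})=0$ and $\psi_{\mathcal{F}}(J)\neq 0$ for every nonempty strict subset $J$ with $\{J,J^{\complement}\}\neq\{I,I^{\complement}\}$.

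By Lemma~\ref{lem:arbresdiffdegen}, this real fiber is in bijection with the set of decorated trees of $\mathcal{T}(b_{1},\dots,b_{p},\psi_{\mathcal{F}})$ having no edge whose removal separates the vertices into a partition on which $\psi_{\mathcal{F}}$ vanishes; by the choice of $\psi_{\mathcal{F}}$ the only such partition is $I\cup I^{\complement}$, so the fiber corresponds to the decorated trees compatible with $\psi_{\mathcal{F}}$ with no edge cutting $\mathfrak{t}$ according to $I\cup I^{\complement}$. Next I would compare with a generic sign function $\psi'$, obtained by perturbing $\psi_{\mathcal{F}}$ off $A_{I}$: set $\psi'(I)=+$, $\psi'(I^{\complement})=-$ (the side is immaterial) and $\psi'(J)=\psi_{\mathcal{F}}(J)$ for all other nonempty strict subsets, so that $\psi'$ is nonzero on every nonempty strict subset. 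Since the orientation condition~(3) of Definition~\ref{def:merodecoratedtree} only constrains a tree through the values of the sign function on the partitions actually realized by its edges, and for a tree with no edge cutting according to $I\cup I^{\complement}$ those are precisely the partitions where $\psi_{\mathcal{F}}$ and $\psi'$ agree, such a tree is compatible with $\psi_{\mathcal{F}}$ if and only if it is compatible with $\psi'$. Hence the fiber is in bijection with $\{\mathfrak{t}\in\mathcal{T}(b_{1},\dots,b_{p},\psi') : \mathfrak{t}\text{ has no edge cutting according to }I\cup I^{\complement}\}$.

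The count now follows by subtracting the complement. By Theorem~\ref{thm:numerofibre} (equivalently by Proposition~\ref{prop:changepoids} and the induction establishing it) the set $\mathcal{T}(b_{1},\dots,b_{p},\psi')$ has $\frac{a!}{(a+2-p)!}$ elements, and by Proposition~\ref{prop:distributiondeg} exactly $\frac{d_{I}!\,(a-d_{I})!}{(d_{I}+1-c_{I})!\,(a+1-p+c_{I}-d_{I})!}$ of them carry an edge cutting according to $I\cup I^{\complement}$. Taking the difference yields the asserted formula, which is exactly the statement of Proposition~\ref{prop:degintro}.

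The step I expect to require the most care is the comparison between $\psi_{\mathcal{F}}$ and $\psi'$: one has to be sure that forbidding the degenerating edge exactly compensates for the vanishing of $\psi_{\mathcal{F}}$ on $I$, so that the two sets of decorated trees coincide on the nose rather than merely being equinumerous. The genuinely degenerate boundary cases — $p=2$, and $I$ or $I^{\complement}$ a singleton, where the extra symmetry recorded in Proposition~\ref{prop:distributiondeg} and the emptiness at the origin from Proposition~\ref{prop:trivial} come into play — should be checked directly against those statements to confirm the formula specializes correctly.
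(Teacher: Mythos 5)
Your proposal is correct and follows essentially the same route as the paper: reduce to a real fiber via Theorem~\ref{thm:dependarrangement}, identify it with decorated trees having no edge realizing the partition $I\cup I^{\complement}$, compare with the generic sign function of an adjacent chamber (your perturbed $\psi'$ is exactly the paper's $\psi_{\mathcal{C}}$), and subtract the count of Proposition~\ref{prop:distributiondeg} from the total given by Theorem~\ref{thm:numerofibre}.
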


\begin{proof}
Without loss of generality, we assume by Theorem~\ref{thm:dependarrangement} that $\mathcal{F}$ is a real fiber. The sign function $\psi_{\mathcal{F}}$ is zero only for subsets $I$ and~$I^{\complement}$. The elements of $\mathcal{F}$ are classified by the set of decorated trees in $\mathcal{T}(b_{1},\dots,b_{p},\psi_{\mathcal{F}})$.\newline
The fiber $\mathcal{F}$ is in the closure of some chamber $\mathcal{C}$ of the real arrangement in the real part of the residual space. The elements of the fibers over the chamber $\mathcal{C}$ are classified by the set of decorated trees in $\mathcal{T}(b_{1},\dots,b_{p},\psi_{\mathcal{C}})$, where the sign of any nonempty proper subset of $\{1,\dots,p\}$ is nonzero. Without loss of generality, we assume $\psi_{\mathcal{C}}(I)$ is positive and $\psi_{\mathcal{C}}(I^{\complement})$ is negative. According to Theorem~\ref{thm:numerofibre},  there are $\frac{a!}{(a+2-p)!}$ such trees. \newline
The decorated trees compatible with $\psi_{\mathcal{F}}$ are exactly the same that are compatible with~$\psi_{\mathcal{C}}$ and for which there exists no edge that splits the set of vertices of the decorated tree  according to the partition $I \cup I^{\complement}$. By Proposition~\ref{prop:distributiondeg}, among the decorated trees compatible with $\psi_{\mathcal{C}}$ there are exactly $\frac{d_{I}!\, (a-d_{I})!}{(d_{I}+1-c_{I})!\, (a+1-p+c_{I}-d_{I})!}$ of them having such an edge.
\end{proof}
It should be noted that above the resonance hyperplane corresponding to the vanishing of the residue of only one pole, the number of elements in the fiber is simply given by the formula $\frac{a!}{(a+2-p)!}-\frac{d_{I}!}{(d_{I}+2-p)!}$. 

We finally illustrate the results of this section on an example.
\begin{ex}\label{ex:arrangement5}
 Let us consider the case of $\mathcal{H}(4,-2,-2,-2)$ introduced in Example~\ref{ex:arrangement}. Take the partition $\{2\}\cup\{1,3\}$ of the poles. On the $1$-form pictured in the top of Figure~\ref{fig:arrangement1}, we have the saddle connection labeled by $2$ which separated the poles in this partition. On the other hand, there is no such saddle connection on the lower $1$-form of this figure. But in the $1$-form where we permute the role of $p_{2}$ and $p_{3}$, there is such saddle connection. Hence the $1$-form represented in Figure~\ref{fig:arrangement3} is the unique one above the resonance hyperplane~$A_{1,2}$ and compatible with the considered sign function. The other $3$ elements degenerate to a singular element of $\bar{\mathcal{H}}(4,-2,-2,-2)$. This coincide with the computation of Corollary~\ref{cor:elementresonance}. Indeed in this case $d_{I}=1$ and $c_{I}=1$, hence the formula is 
 $$ \frac{4!}{3!} - \frac{1!\,3!}{1!\, 0!} = 1\,.$$
\end{ex}

\section{Monodromy of the isoresidual cover}
\label{sec:monodromie}

We know by Theorem \ref{thm:numerofibre}  that the isoresidual fibration of a stratum $\mathcal{H}(a,-b_{1},\dots,-b_{p})$ is an unramified cover of degree $\frac{a!}{(a+2-p)!}$ over the complement $\mathcal{R}_{p} \setminus \mathcal{A}_{p}$ of the resonance arrangement in the residual set. In this section we study the monodromy of this cover.
\smallskip
\par

Let $W_{p}$  be the fundamental group of $\mathcal{R}_{p} \setminus \mathcal{A}_{p}$. For any stratum $\mathcal{H}(a,-b_{1},\dots,-b_{p})$ on the Riemann sphere, the {\em monodromy group $\mathcal{M}_{\mathcal{H}}$} of the isoresidual cover is the morphism from the fundamental group $W_{p}$ of $\mathcal{R}_{p} \setminus \mathcal{A}_{p}$ into the automorphism group of the fiber given by the lifting of the loops.  
% Its elements act as permutation on each fiber of the complement of the resonance hyperplane.

% Theorem \ref{thm:numerofibre} proved that the isoresidual fibration of a stratum $\mathcal{H}(a,-b_{1},\dots,-b_{p})$ restricts to an unramified cover of degree $\frac{a!}{(a+2-p)!}$ over the complement $\mathcal{R}_{p} \setminus \mathcal{A}_{p}$ of the resonance arrangement in the residual set.\newline
% We denote by $W_{p}$ the fundamental group of $\mathcal{R}_{p} \setminus \mathcal{A}_{p}$, for any stratum $\mathcal{H}$ of $1$-forms with one zero and $p$ poles on the Riemann sphere, the monodromy group $\mathcal{M}_{\mathcal{H}}$ of its isoresidual cover is a quotient of $W_{p}$. Its elements act as permutation on each fiber of the complement of the resonance hyperplane.\newline

To the best of our knowledge, little is known about groups $W_{p}$ for $p\geq4$ associated to the resonance arrangement. Indeed, we have shown in Lemma~\ref{lem:passimplice} that these arrangements are not simplicial.
On the contrary, these resonance arrangements are well understood if $p \leq 3$. The case $p=2$ is already settled by Proposition~\ref{prop:trivial2}. Indeed, the arrangement complement $\mathcal{R}_{2} \setminus \mathcal{A}_{2}$ is isomorphic to $\mathbb{C}^{\ast}$, the fundamental group $W_{2} $ is isomorphic to $ \mathbb{Z}$ and the isoresidual cover is an isomorphism. Hence the monodromy is trivial. 
In the case $p=3$,  the arrangement $\mathcal{A}_{3}$ is formed by three complex lines with trivial mutual intersection in a complex plane as shown in Figure~\ref{fig:arrangement}.  In particular, $W_{3} $ is isomorphic to the pure braid group~$ PB_{3}$ and its monodromy is computed in Sections~\ref{sec:monod2sim} and~\ref{sec:monod3poles}, proving Theorem~\ref{thm:monothree}.
\newline

In a fundamental group $W_{p}$, the oriented simple loops $\gamma_{I}$ around each resonance hyperplane $A_{I}$ form a set of generators. The loop $\gamma_{I}$ is oriented in such a way that the induced loop given by $\sum_{j \in I} \lambda_{j}$ in $\mathbb{C}^{\ast}$ makes a positive loop around zero.  The equality $\sum_{j \in I^{\complement}} \lambda_{j}=-\sum_{j \in I} \lambda_{j}$ implies that the loop associated to $I^{\complement}$ also makes a positive loop around zero. Hence the orientation is independent of the choice of $I$ or its complement. By abuse of notation, we will denote by $\gamma_{I}$ both the loop and its monodromy action on any fiber of $\mathcal{R}_{p} \setminus \mathcal{A}_{p}$. Moreover, if $I = \{r\}$ is a singleton, we write $\gamma_{r}$ instead of $\gamma_{\{r\}}$.\newline

In Section~\ref{sec:monodromiegen}, we give the monodromy of the generator $\gamma_{I}$ and in Section~\ref{sec:monodromierel} we give the relations between these elements. In Section~\ref{sec:monod2sim}, we apply this to study the strata with two simple poles.
In our strategy, we focus on real fibers that belong to a particular real chamber. Indeed, these elements are classified by the decorated trees compatible with a sign function that vanishes on no nontrivial subset and we study the action on these trees.\newline

\subsection{Monodromy of the generators}
\label{sec:monodromiegen}

We first consider the monodromy of simple loops around a resonance hyperplane. They form a set of generators of the fundamental group~$W_{p}$ of the complement of the resonance arrangement. Here, we consider the action of a loop~$\gamma_{I}$ on real fibers over a chamber $\mathcal{C}$ incident to hyperplane $A_{I}$.\newline

Monodromy along loops around resonance hyperplane defined by the vanishing of only one residue is a specific case. Recall that   $d_{I}=-1+\sum_{j \in I} b_{j}$ is the resonance degree of~$I$.

\begin{prop}\label{prop:monodromieloop}
If $I^{\complement}$ is a singleton, the monodromy action of loop $\gamma_{I}$ on any generic fiber stabilizes $\frac{a!}{(a+2-p)!}-\frac{d_{I}!}{(d_{I}+2-p)!}$ elements and has $\frac{(d_{I}-1)!}{(d_{I}+2-p)!}$ orbits of order $d_{I}$.
\end{prop}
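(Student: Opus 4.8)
The plan is to realise $\gamma_{I}$ by an explicit loop and to follow its action tree by tree, using the enumeration of Section~\ref{sec:comptereson}. Write $I^{\complement}=\{r\}$. By Theorem~\ref{thm:dependarrangement} and Proposition~\ref{prop:realgen} we may assume the fiber lies over a real configuration $\lambda^{0}$ in a chamber $\mathcal{C}$ incident to $A_{I}$, and we choose $\lambda^{0}$ so that $\lambda^{0}_{j}$ is fixed for $j\neq r$, so that $|\lambda^{0}_{r}|=\epsilon$ is small, and so that every partial sum $\sum_{j\in S}\lambda^{0}_{j}$ over a proper nonempty $S\neq\{r\}$ has modulus at least some $\delta\gg\epsilon$. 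The generator $\gamma_{I}$ is then represented by letting $\lambda_{r}$ run once counterclockwise around the circle $|\lambda_{r}|=\epsilon$, all other residues fixed: since $A_{I}=\{\lambda_{r}=0\}$, and for $S$ with $r\notin S$ the partial sum is constant and nonzero while for $S\ni r$ with $|S|\geq2$ it equals $\lambda_{r}+c$ with $c=\sum_{j\in S\setminus\{r\}}\lambda^{0}_{j}$ of modulus $\geq\delta$, this circle meets no hyperplane of $\mathcal{A}_{p}$ other than being a loop around $A_{I}$. (For $p=2$ the hyperplane $A_{I}$ is the excluded origin, so the statement is vacuous and we assume $p\geq3$.)

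\textbf{Type A: fixed elements.} By Lemma~\ref{lem:arbresdiff} the fiber is $\mathcal{T}(b_{1},\dots,b_{p},\psi_{\mathcal{C}})$. Split it into the decorated trees possessing an edge $e_{r}$ that separates $\{r\}$ from the rest (type B) and those without (type A). By Proposition~\ref{prop:distributiondeg} applied to the partition $I\sqcup\{r\}$ there are $\frac{d_{I}!}{(d_{I}+2-p)!}$ trees of type B, grouped into $\frac{(d_{I}-1)!}{(d_{I}+2-p)!}$ classes of $d_{I}$ trees that differ only by the corner of the zero at which the subtree carrying $r$ is attached; hence there are $\frac{a!}{(a+2-p)!}-\frac{d_{I}!}{(d_{I}+2-p)!}$ trees of type A. For a type-A tree every saddle connection has period either independent of $\lambda_{r}$ or of the form $\lambda_{r}+c$ with $c\neq0$, so along $\gamma_{I}$ none of them vanishes: the associated family of translation surfaces stays in the stratum, its contraction-flow image crosses no wall of $\mathbb{R}\mathcal{A}_{p}$, and the decorated tree — hence the point of the fiber — is unchanged when $\lambda_{r}$ returns to $\lambda^{0}_{r}$. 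Thus $\gamma_{I}$ fixes every type-A element, which accounts for exactly the $\frac{a!}{(a+2-p)!}-\frac{d_{I}!}{(d_{I}+2-p)!}$ stabilized elements.

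\textbf{Type B: the $d_{I}$-cycles.} The only saddle connection of a type-B tree that can shrink on $A_{I}$ is $e_{r}$, whose period is $\pm\lambda_{r}$; along $\gamma_{I}$ it keeps length $\epsilon$ while its direction turns by $2\pi$, so the domain of $p_{r}$ — glued to the unique zero along the short slit $e_{r}$ — is carried once around that zero. A single full turn realises exactly the elementary surgery of Remark~\ref{rem:recurencedegre} (attaching a negative-residue end at the zero) advanced by one step: it moves the attachment of the $r$-subtree to the next corner admissible under the parity condition of Definition~\ref{def:abstractdecoratedtree}, which are precisely the corners indexing the $d_{I}$ members of a class in Proposition~\ref{prop:distributiondeg}. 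Hence $\gamma_{I}$ restricts on each class to the cyclic shift by one, i.e.\ to a $d_{I}$-cycle, and together with the fixed type-A elements this gives the asserted decomposition into $\frac{(d_{I}-1)!}{(d_{I}+2-p)!}$ orbits of length $d_{I}$ and $\frac{a!}{(a+2-p)!}-\frac{d_{I}!}{(d_{I}+2-p)!}$ fixed points.

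\textbf{Main obstacle.} The delicate point is the claim that one turn of $\lambda_{r}$ advances the attachment by exactly one admissible corner (in particular by an amount coprime to $d_{I}$), rather than by a larger multiple. I would pin this down by splitting $\gamma_{I}$ into the two arcs joining $\epsilon$ to $-\epsilon$ through the upper and through the lower half-plane: each arc transports the fiber over $\mathcal{C}$ to the fiber over the adjacent chamber $\mathcal{C}'$ across $A_{I}$, reverses the orientation of $e_{r}$, and hence — by Condition~(3) of Definition~\ref{def:merodecoratedtree} together with the parity constraint — displaces $e_{r}$ by an odd number of corners; the short-slit continuity picture forces this number to be $\pm1$, with sign fixed by the choice of half-plane, so $\gamma_{I}$ displaces $e_{r}$ by two corners, i.e.\ by one admissible position. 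An alternative, which also explains why the $d_{I}$ sheets of a class form a single totally ramified branch over a generic point of $A_{I}$, is to recognise the shrinking-balloon degeneration as the prong-rotation monodromy of the construction in Remark~\ref{rem:recurencedegre} and to read it off the multi-scale compactification of~\cite{BCGGM2}.
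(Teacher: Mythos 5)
Your proposal is correct and follows essentially the same route as the paper: reduce to a real fiber in a chamber adjacent to $A_{I}$ with $\lambda_{r}$ small, split the fiber via Proposition~\ref{prop:distributiondeg} into trees with or without an edge separating $\{r\}$, observe that the latter are fixed, and that on each class of $d_{I}$ trees the loop advances the attachment of the $r$-leaf by one allowed corner, giving $d_{I}$-cycles. The ``one admissible corner per turn'' step that you flag as the main obstacle is exactly the point the paper also asserts (with the clockwise-rotation picture of Example~\ref{ex:arrangement6}), and your half-loop/wall-crossing sketch is a reasonable way to make it precise, so there is no gap beyond the paper's own level of detail.
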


\begin{proof}
We consider a chamber $\mathcal{C}$ of real fibers such that the hyperplane $A_{I}$ is adjacent to $\mathcal{C}$. We choose in the chamber a fiber $\mathcal{F}$ such that $\sum_{j \in I} \lambda_{j}$ is very small in modulus relatively to other partial sums of residues. Thus, the monodromy of the isoresidual cover along the loop $\gamma_{I}$ only affects edges cutting the decorated trees according to partition $I \cup I^{\complement}$. According to Proposition \ref{prop:distributiondeg}, there are $\frac{a!}{(a+2-p)!}-\frac{d_{I}!}{(d_{I}+2-p)!}$ elements of $\mathcal{F}$ that do not have such an edge and thus are fixed by the monodromy along~$\gamma_{I}$.\newline
Among the $\frac{d_{I}!}{(d_{I}+2-p)!}$ other elements, the loop $\gamma_{I}$ preserves every edge different from that cutting the trees according to partition $I \cup I^{\complement}$. Therefore, it preserves the shape of the two subtrees corresponding to $I$ and $I^{\complement}$ up to isotopy.  Then Proposition~\ref{prop:distributiondeg} implies that the action of $\gamma_{I}$ preserves $\frac{(d_{I}-1)!}{(d_{I}+2-p)!}$ classes of $d_{I}$ trees that differ only be the corner on which the leaf corresponding to the pole of the singleton $I^{\complement}$ is attached.\newline
For each loop around hyperplane $A_{I}$, the leaf moves in each of the two subtrees from one corner to the next allowed corner in the clockwise order. This move is trivial for the tree with only one vertex. After $d_{I}$ loops, we recover the initial tree. 
\end{proof}
Before going to the general case, we illustrate this result in our example of the stratum $\mathcal{H}(4,-2,-2,-2)$ started in Example~\ref{ex:arrangement}.
\begin{ex}\label{ex:arrangement6}
 Consider a loop that goes from the point pictured in Figure~\ref{fig:arrangement} around the hyperplane $A_{2}$. Then the monodromy is given in Figure~\ref{fig:arrangement6}. Indeed, the monodromy consists of making the segment labeled by $2$ small and rotation in the positive direction. 
 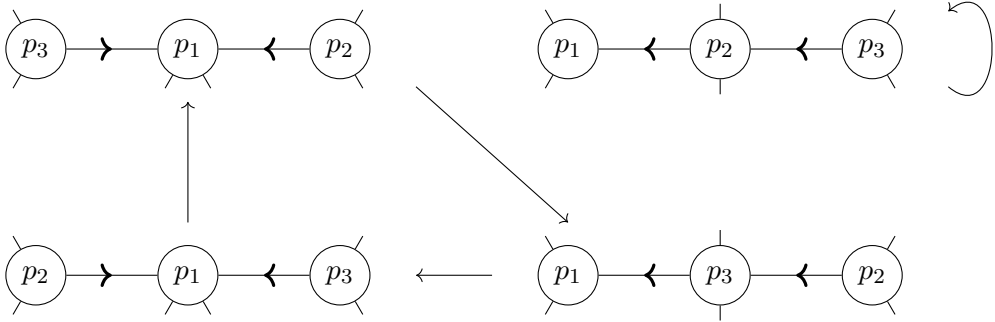
\begin{figure}[ht]
\begin{tikzpicture}[scale=1,decoration={
    markings,
    mark=at position 0.5 with {\arrow[very thick]{>}}}]
   
\node[circle,draw] (a) at  (-2,0) {$p_{3}$};
\node[circle,draw] (b) at  (0,0) {$p_{1}$};
\node[circle,draw] (c) at  (2,0) {$p_{2}$};
\draw [postaction={decorate}]  (a) --  (b);
\draw[postaction={decorate}] (c) -- (b);

\draw (a) -- ++(120:.6);
\draw (a) -- ++(240:.6);
\draw (c) -- ++(60:.6);
\draw(c) -- ++(-60:.6);
\draw(b) -- ++(-120:.6);
\draw (b) -- ++(-60:.6);

 \draw[->] (3,-.5) -- (5,-2.3);

\begin{scope}[xshift=7cm]
\node[circle,draw] (a) at  (-2,0) {$p_{1}$};
\node[circle,draw] (b) at  (0,0) {$p_{2}$};
\node[circle,draw] (c) at  (2,0) {$p_{3}$};
\draw [postaction={decorate}]  (b) --  (a);
\draw[postaction={decorate}] (c) -- (b);

 \draw (a) -- ++(120:.6);
\draw (a) -- ++(240:.6);
\draw (c) -- ++(60:.6);
\draw (c) -- ++(-60:.6);
\draw (b) -- ++(90:.6);
\draw (b) -- ++(-90:.6);

 \draw[->] (3,-.5) .. controls ++(-40:1) and ++(40:1) .. (3,.5) ;
\end{scope}

\begin{scope}[yshift=-3cm]
\node[circle,draw] (a) at  (-2,0) {$p_{2}$};
\node[circle,draw] (b) at  (0,0) {$p_{1}$};
\node[circle,draw] (c) at  (2,0) {$p_{3}$};
\draw [postaction={decorate}]  (a) --  (b);
\draw[postaction={decorate}] (c) -- (b);

\draw (a) -- ++(120:.6);
\draw (a) -- ++(240:.6);
\draw (c) -- ++(60:.6);
\draw(c) -- ++(-60:.6);
\draw(b) -- ++(-120:.6);
\draw (b) -- ++(-60:.6);

  \draw[->] (0,.7) -- (0,2.3);
\end{scope}

\begin{scope}[xshift=7cm,yshift=-3cm]
\node[circle,draw] (a) at  (-2,0) {$p_{1}$};
\node[circle,draw] (b) at  (0,0) {$p_{3}$};
\node[circle,draw] (c) at  (2,0) {$p_{2}$};
\draw [postaction={decorate}]  (b) --  (a);
\draw[postaction={decorate}] (c) -- (b);

 \draw (a) -- ++(120:.6);
\draw (a) -- ++(240:.6);
\draw (c) -- ++(60:.6);
\draw (c) -- ++(-60:.6);
\draw (b) -- ++(90:.6);
\draw (b) -- ++(-90:.6);

  \draw[->] (-3,0) -- (-4,0);
\end{scope}
 \end{tikzpicture}
 \caption{The monodromy associated to $\gamma_{2}$ on the set of decorated trees associated to the differentials of Figure~\ref{fig:arrangement1}} \label{fig:arrangement6}
\end{figure}
\end{ex}

The formula of the general case is slightly more complicated. We should think of the monodromy action as the rotation of two gears on each other. Common divisors between the number of teeth of the two gears lead to a smaller number of mutually accessible configurations by rotating the gears simultaneously. Note that this is similar to the prong-matching equivalence classes introduced in \cite{BCGGM2}.

\begin{prop}\label{prop:monodromieloopbis}
If neither $I$ nor $I^{\complement}$ is a singleton, then the monodromy action of loop~$\gamma_{I}$ stabilizes $\frac{a!}{(a+2-p)!}-\frac{d_{I}!\, (a-d_{I})!}{(d_{I}+1-c_{I})! \, (a+1-p+c_{I}-d_{I})!}$ elements and has $\frac{(d_{I}-1)! \, (a-d_{I}-1)!\, \gcd(d_{I},a-d_{I})}{(d_{I}+1-c_{I})! \, (a+1-p+c_{I}-d_{I})!}$ orbits of order $\lcm(d_{I},a-d_{I})$.
\end{prop}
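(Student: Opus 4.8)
The plan is to mimic closely the structure of the proof of Proposition~\ref{prop:monodromieloop}, upgrading each step from the singleton case to the general case where both $I$ and $I^{\complement}$ are non-trivial. First I would choose a chamber $\mathcal{C}$ of the real arrangement adjacent to $A_{I}$, and pick within it a real fiber $\mathcal{F}$ for which $\sum_{j\in I}\lambda_{j}$ is very small in modulus compared to every other partial sum of residues, exactly as in Proposition~\ref{prop:monodromieloop}. Then, as we turn around $\gamma_{I}$, only the edge (if any) that cuts the decorated tree according to the partition $I\cup I^{\complement}$ has a period that shrinks and winds; all other edges keep their periods away from zero. Hence the monodromy fixes every decorated tree that has no such edge, and by Proposition~\ref{prop:distributiondeg} there are exactly $\frac{a!}{(a+2-p)!}-\frac{d_{I}!\,(a-d_{I})!}{(d_{I}+1-c_{I})!\,(a+1-p+c_{I}-d_{I})!}$ of these. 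This establishes the first assertion.

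Next I would analyze the action on the remaining $\frac{d_{I}!\,(a-d_{I})!}{(d_{I}+1-c_{I})!\,(a+1-p+c_{I}-d_{I})!}$ decorated trees, each carrying a distinguished edge $e$ splitting it into subtrees $\tilde{\mathfrak{t}}_{I}$ and $\tilde{\mathfrak{t}}_{I^{\complement}}$ (in the notation of the proof of Proposition~\ref{prop:distributiondeg}). Since $\gamma_{I}$ preserves all other edges, it preserves the \emph{isotopy class of each subtree with its distinguished leaf removed}; these classes are precisely the $\frac{(d_{I}-1)!\,(a-d_{I}-1)!}{(d_{I}+1-c_{I})!\,(a+1-p+c_{I}-d_{I})!}$ classes of Proposition~\ref{prop:distributiondeg}. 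Within such a class, a decorated tree is determined by the pair (corner of $\tilde{\mathfrak{t}}_{I}$, corner of $\tilde{\mathfrak{t}}_{I^{\complement}}$) on which the two halves of $e$ are reattached; there are $d_{I}$ admissible corners on the $I$-side and $a-d_{I}$ admissible corners on the $I^{\complement}$-side, giving $d_{I}(a-d_{I})$ trees in the class. The key geometric input is that one loop around $A_{I}$ advances the attaching corner on \emph{each} side by exactly one admissible corner in the clockwise order — this is the "two gears" mechanism, and it is the analogue of the last paragraph of the proof of Proposition~\ref{prop:monodromieloop}, now applied on both sides simultaneously.

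It then remains to count orbits of the cyclic action $(x,y)\mapsto(x+1 \bmod d_{I},\; y+1 \bmod (a-d_{I}))$ on $\mathbb{Z}/d_{I}\times\mathbb{Z}/(a-d_{I})$. By the Chinese-remainder/elementary group-theory computation, the common value $(x,y)$ first returns after $\lcm(d_{I},a-d_{I})$ steps, so every orbit has size $\lcm(d_{I},a-d_{I})$, and there are $\frac{d_{I}(a-d_{I})}{\lcm(d_{I},a-d_{I})}=\gcd(d_{I},a-d_{I})$ orbits inside each class. Multiplying by the number of classes yields $\frac{(d_{I}-1)!\,(a-d_{I}-1)!\,\gcd(d_{I},a-d_{I})}{(d_{I}+1-c_{I})!\,(a+1-p+c_{I}-d_{I})!}$ orbits of order $\lcm(d_{I},a-d_{I})$, as claimed.

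The main obstacle I anticipate is justifying precisely the "advance by one corner on each side per loop" statement: one must check that making $\sum_{j\in I}\lambda_{j}$ traverse a small positive loop around $0$ in $\mathbb{C}^{\ast}$ does indeed rotate the attaching prong by exactly one admissible corner on \emph{both} subtrees, with the correct (clockwise) orientation, and that no two distinct corners get identified en route (i.e. the period never passes through another partial-sum value). This is the same local flat-geometry surgery argument as in Proposition~\ref{prop:monodromieloop} — tracking how the shrinking saddle connection sweeps through the cyclic arrangement of horizontal prongs at the zero as its direction winds — but here it has to be run on the two domains glued along $e$ at once, and care is needed to see that the contributions on the two sides are genuinely independent, so that the monodromy is the product action rather than something finer. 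Once that local picture is nailed down, the rest is the bookkeeping and the elementary $\gcd$/$\lcm$ count described above.
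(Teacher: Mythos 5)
Your proposal is correct and follows essentially the same route as the paper's proof: restrict to a real fiber in a chamber adjacent to $A_{I}$ with $\sum_{j\in I}\lambda_{j}$ small, note that only trees with an edge realizing the partition $I\cup I^{\complement}$ are moved (counted by Proposition~\ref{prop:distributiondeg}), and observe that within each class of $d_{I}(a-d_{I})$ trees the loop advances the attaching corner on both subtrees simultaneously, reducing the orbit count to the map $(x,y)\mapsto(x+1,y+1)$ on $\mathbb{Z}_{d_{I}}\times\mathbb{Z}_{a-d_{I}}$, which has $\gcd(d_{I},a-d_{I})$ orbits of length $\lcm(d_{I},a-d_{I})$. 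The ``advance by one allowed corner on each side per loop'' step that you flag as the main obstacle is exactly what the paper asserts (by analogy with Proposition~\ref{prop:monodromieloop} and the gear picture), so your write-up is, if anything, slightly more explicit than the original.
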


\begin{proof}
The proof is similar to that Proposition~\ref{prop:monodromieloop}. In every class of $d_{I}(a-d_{I})$ elements in which the shape of the two subtrees corresponding to the partition of vertices $I \cup I^{\complement}$ is the same, the action of the loop moves the edge connecting the two subtrees to the next allowed corner in the clockwise order. Hence the study of orbits of the monodromy action reduces to those of function $$\varphi \colon \mathbb{Z}_{d_{I}} \times \mathbb{Z}_{a-d_{I}} : (x,y) \mapsto (x+1,y+1) \,.$$
It is well known  that $\varphi$ has $\gcd(d_{I},a-d_{I})$ orbits, each of cardinal $\lcm(d_{I},a-d_{I})$.
\end{proof}

As a consequence, we deduce that in some specific cases, the monodromy of a simple loop around a resonance hyperplane is trivial.
\begin{cor}\label{cor:trivmon}
The monodromy of a loop $\gamma_{I}$ on the generic fiber of a stratum $\mathcal{H}$ is trivial if and only if one the following conditions holds:
\begin{enumerate}[(i)]
 \item the stratum $\mathcal{H}$ is $\mathcal{H}(2,-1,-1,-1,-1)$ and $I$ contains two elements;
 \item the stratum $\mathcal{H}$ is $\mathcal{H}(a,-a,-1,-1)$ with $a \geq 1$ and $I=\{1\}$.
\end{enumerate}
\end{cor}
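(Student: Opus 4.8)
The plan is to extract this from Propositions~\ref{prop:monodromieloop} and~\ref{prop:monodromieloopbis}, which describe the full cycle type of $\gamma_{I}$ on the generic fiber: the monodromy of $\gamma_{I}$ is trivial exactly when every orbit of $\gamma_{I}$ is a fixed point. Using $\gamma_{I}=\gamma_{I^{\complement}}$ I would first normalise so that, whenever one of $I,I^{\complement}$ is a singleton, it is $I^{\complement}$, and then split into the two cases covered by those two propositions. In each case the propositions produce non-fixed orbits all of one common order --- namely $d_{I}$ if $I^{\complement}$ is a singleton, and $\lcm(d_{I},a-d_{I})$ otherwise --- together with a count of such orbits given by a ratio of factorials. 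The first step I would carry out is to check that this count is always at least $1$: since $b_{j}\geq 1$ one has $d_{I}\geq c_{I}-1$, and in the non-singleton case $c_{I},c_{I^{\complement}}\geq 2$, so each of $\tfrac{(d_{I}-1)!}{(d_{I}+2-p)!}$ and $\tfrac{(d_{I}-1)!\,(a-d_{I}-1)!\,\gcd(d_{I},a-d_{I})}{(d_{I}+1-c_{I})!\,(a+1-p+c_{I}-d_{I})!}$ is a product of integers each $\geq 1$. Consequently the monodromy of $\gamma_{I}$ is trivial if and only if the common orbit order is $1$, i.e. $d_{I}=1$ in the singleton case and $d_{I}=a-d_{I}=1$ in the non-singleton case (recalling $d_{I}+d_{I^{\complement}}=a$).

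It then remains to solve these numerical conditions. If $I^{\complement}=\{r\}$, then $c_{I}=p-1$, so $d_{I}=1$ says that $p-1$ integers $b_{j}\geq 1$ sum to $2$; this forces $p=3$ and $b_{j}=1$ for the two indices of $I$, whence $b_{r}=a$ from $\sum_{j}b_{j}=a+2$. Relabelling so that $r=1$, this is the stratum $\mathcal{H}(a,-a,-1,-1)$ and $\gamma_{I}=\gamma_{I^{\complement}}=\gamma_{\{1\}}$, which is case (ii). If neither part is a singleton, then $c_{I},c_{I^{\complement}}\geq 2$, so $p\geq 4$; the conditions $d_{I}=1$ and $d_{I^{\complement}}=a-d_{I}=1$, together with $d_{I}\geq c_{I}-1\geq 1$ and the analogous bound for $I^{\complement}$, force $c_{I}=c_{I^{\complement}}=2$ with all four poles simple and $a=d_{I}+d_{I^{\complement}}=2$, i.e. $\mathcal{H}(2,-1,-1,-1,-1)$ with $|I|=2$, which is case (i). Conversely, in each of these two strata the relevant proposition gives $d_{I}=1$ (resp. $\lcm(d_{I},a-d_{I})=1$), so $\gamma_{I}$ acts trivially, which closes the equivalence. (The statement should be read for strata whose isoresidual cover is non-trivial; the degenerate stratum $\mathcal{H}(1,-1,-1,-1)$ has a one-point fiber on which every loop acts trivially.)

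I do not expect a serious obstacle. The only two points requiring genuine care are the verification that the orbit count in Propositions~\ref{prop:monodromieloop} and~\ref{prop:monodromieloopbis} is never zero --- so that ``trivial monodromy'' is really equivalent to ``orbit order equal to $1$'' and not merely ``orbit order $1$, or no non-fixed orbit at all'' --- and the bookkeeping with $d_{I}$, $c_{I}$ and the labelling convention, which is precisely what pins the two solutions down to the two named strata.
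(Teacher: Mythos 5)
Your derivation is correct and is essentially the paper's own: the corollary is stated there without a written proof, as an immediate consequence of Propositions~\ref{prop:monodromieloop} and~\ref{prop:monodromieloopbis}, and your reduction (the orbit count in each proposition is a product of positive integers, hence at least one, so triviality is equivalent to orbit order $1$, i.e. $d_{I}=1$ in the singleton case and $d_{I}=a-d_{I}=1$ otherwise) followed by the numerical case analysis pinning down $\mathcal{H}(a,-a,-1,-1)$ and $\mathcal{H}(2,-1,-1,-1,-1)$ is precisely how the deduction is meant to go. Your closing caveat about degenerate one-point fibers (e.g. $\mathcal{H}(1,-1,-1,-1)$, and similarly the $p=2$ strata) points to an imprecision in the statement itself rather than a gap in your argument.
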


Now we deduce from Propositions~\ref{prop:monodromieloop} and~\ref{prop:monodromieloopbis} that  the monodromy of the isoresidual cover does not always coincide with the full symmetric group~$\mathfrak{S}(\mathcal{F})$ of the generic fiber~$\mathcal{F}$. Note however that the fact that  strata of meromorphic $1$-forms are connected in genus zero implies that the monodromy group is a transitive subgroup of the symmetric group.

\begin{cor}\label{cor:alternate}
Given a stratum $\mathcal{H}(a,-b_{1},\dots,-b_{p})$, the monodromy group of the isoresidual cover is contained in the alternating group $\mathfrak{A}(\mathcal{F})$ of the generic fiber $\mathcal{F}$ if and only if one of the following conditions is satisfied:
\begin{enumerate}[(i)]
 \item $p=3$ and $b_{1},b_{2},b_{3}$ have the same parity;
 \item $p=4$ and $b_{1},b_{2},b_{3},b_{4}$ are even;
 \item $p=5$ and $a$ is even or $b_{1},b_{2},b_{3},b_{4},b_{5}$ are odd;
 \item $p \geq 6$.
\end{enumerate}
\end{cor}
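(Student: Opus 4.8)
The plan is to reduce the question of whether $\mathcal{M}_{\mathcal{H}} \subseteq \mathfrak{A}(\mathcal{F})$ to a question about the signatures of the generating permutations $\gamma_I$, and then to compute those signatures from the cycle-type data provided by Propositions~\ref{prop:monodromieloop} and~\ref{prop:monodromieloopbis}. Since the loops $\gamma_I$, for $I$ ranging over nonempty strict subsets of $\{1,\dots,p\}$ (up to complement), generate $W_p$, the monodromy group $\mathcal{M}_{\mathcal{H}}$ is contained in $\mathfrak{A}(\mathcal{F})$ if and only if every generator $\gamma_I$ acts as an even permutation of the generic fiber. So the whole corollary amounts to determining for which strata \emph{all} the $\gamma_I$ are even, and I would organize the proof as an analysis of $\mathrm{sgn}(\gamma_I)$ in terms of $p$, $c_I$, $d_I$ and $a$.

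First I would record the sign computation. By Proposition~\ref{prop:monodromieloop}, if $I^{\complement}$ is a singleton then $\gamma_I$ is a product of $N_1 := \frac{(d_I-1)!}{(d_I+2-p)!}$ cycles of length $d_I$, so $\mathrm{sgn}(\gamma_I) = (-1)^{N_1(d_I-1)}$; here $d_I = a$ by the identity $d_I + d_{I^{\complement}} = a$ and $d_{I^{\complement}} = 0$ for a singleton of a simple pole — wait, more carefully, $I^{\complement}$ a singleton means $I^{\complement} = \{r\}$ with $b_r$ possibly large, so $d_{I^{\complement}} = b_r - 1$ and $d_I = a - b_r + 1$; I would keep the general $d_I$. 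By Proposition~\ref{prop:monodromieloopbis}, if neither $I$ nor $I^{\complement}$ is a singleton, $\gamma_I$ is a product of $N_2 := \frac{(d_I-1)!\,(a-d_I-1)!\,\gcd(d_I,a-d_I)}{(d_I+1-c_I)!\,(a+1-p+c_I-d_I)!}$ cycles of length $\ell := \mathrm{lcm}(d_I, a-d_I)$, so $\mathrm{sgn}(\gamma_I) = (-1)^{N_2(\ell-1)}$. The core of the proof is then a finite case analysis: a cycle of odd length is always even, so $\gamma_I$ can only be odd when its cycle length is even and the number of cycles is odd; I would isolate exactly when this happens. In the singleton case this is $d_I$ even and $\frac{(d_I-1)!}{(d_I+2-p)!}$ odd; the factorial ratio $\frac{(d_I-1)!}{(d_I+2-p)!}$ is a product of $p-3$ consecutive integers ending at $d_I-1$, which is odd precisely when $p \leq 4$ (for $p=3$ it is the empty product $1$; for $p=4$ it equals $d_I - 1$, odd iff $d_I$ even). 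In the non-singleton case one analyzes $\gcd(d_I,a-d_I)$ and $\mathrm{lcm}(d_I,a-d_I)$ together with the analogous factorial ratios.

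Next I would translate these parity conditions, which must hold \emph{simultaneously for every admissible $I$}, into the four listed cases. For $p = 3$ the only nontrivial subsets are singletons $\{1\},\{2\},\{3\}$, each with complement of size $2$ (not a singleton), so all three $\gamma_r$ fall under Proposition~\ref{prop:monodromieloopbis} with $d_I = b_r - 1$, $a - d_I = a - b_r + 1$, and $c_I = 1$; one checks $\mathrm{sgn}(\gamma_r)$ is $+1$ for all $r$ exactly when $b_1,b_2,b_3$ have the same parity, giving (i). For $p=4$, both singletons and pairs occur; I would check that evenness of all $\gamma_I$ forces all $b_j$ even, recovering (ii). For $5 \le p \le 6$ the factorial ratios in $N_1, N_2$ become products of $\ge 2$ consecutive integers hence even, so every $\gamma_I$ is automatically a product of an even number of cycles \emph{unless} that even number interacts with the cycle length — one finds the only surviving obstruction is parity of $a$, giving (iii); and for $p \ge 7$ the number of cycles is always even while one must still verify the cycle lengths don't conspire, which forces $\gamma_I$ even unconditionally, giving (iv). Finally I would invoke the last sentence of the excerpt (genus-zero strata are connected, so $\mathcal{M}_{\mathcal{H}}$ is transitive) only to note this is consistent, though it is not strictly needed for the containment.

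The main obstacle I anticipate is the bookkeeping in the non-singleton case for small $p$ (especially $p=4,5$): there one must simultaneously control $\gcd(d_I, a-d_I)$, $\mathrm{lcm}(d_I, a-d_I)$, and two separate factorial ratios $\frac{(d_I-1)!}{(d_I+1-c_I)!}$ and $\frac{(a-d_I-1)!}{(a+1-p+c_I-d_I)!}$, and check that the product $N_2(\ell-1)$ is even for \emph{all} choices of $I$ except in the listed families. Handling the edge cases where $d_I$ or $a - d_I$ is very small (so a factorial ratio degenerates to an empty product equal to $1$) is where sign errors are most likely, and I would treat those degenerate subsets separately and carefully, cross-checking against Corollary~\ref{cor:trivmon} (the strata with a trivial $\gamma_I$) to make sure no spurious odd generator is introduced.
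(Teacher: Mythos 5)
Your overall strategy is the same as the paper's: reduce the containment in $\mathfrak{A}(\mathcal{F})$ to the statement that every generator $\gamma_{I}$ is an even permutation, read off the cycle type of $\gamma_{I}$ from Propositions~\ref{prop:monodromieloop} and~\ref{prop:monodromieloopbis}, and do a case analysis on $p$, $c_{I}$, $d_{I}$, $a$. Your treatment of the singleton case (odd exactly when $d_{I}$ is even and $p\leq 4$) agrees with the paper. However, there are concrete errors in the execution. First, for $p=3$ you apply Proposition~\ref{prop:monodromieloopbis} to the singletons $I=\{r\}$ with $d_{I}=b_{r}-1$; that proposition explicitly requires that \emph{neither} $I$ nor $I^{\complement}$ be a singleton, and its formula applied as you propose gives the wrong cycle length $\lcm(b_{r}-1,a-b_{r}+1)$ (which can exceed the fiber size) and a non-integer orbit count. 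Since $\gamma_{I}=\gamma_{I^{\complement}}$, the correct route — and the one the paper takes — is Proposition~\ref{prop:monodromieloop} with the singleton as the complement: for $p=3$ each $\gamma_{r}$ is a single cycle of length $a+1-b_{r}$, and the parity condition (i) then falls out. Your claimed conclusion for (i) is right, but the proof as described does not reach it.

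Second, your mechanism for cases (iii)--(iv) is flawed where it matters most. The claim that for $5\leq p\leq 6$ ``the factorial ratios in $N_{1},N_{2}$ become products of $\geq 2$ consecutive integers hence even'' is false precisely when one part of the partition has two elements: for $p=5$, $c_{I}=2$ the relevant ratio degenerates to the single integer $a-d_{I}-1$, which can be odd, and these are exactly the partitions that produce odd generators when $a$ is odd (for $p=6$ the count is $(d_{I}-1)(a-d_{I}-1)$ and the analysis is similar). So the statement that every $\gamma_{I}$ is ``automatically a product of an even number of cycles'' contradicts the very obstruction you then need for the necessity of $a$ even in (iii). More generally, the ``only if'' directions — exhibiting an odd generator when the listed conditions fail, e.g.\ for $p=4$ with some $b_{j}$ odd, or for $p=5,6$ with $a$ odd (which requires a short parity/pigeonhole argument on the $b_{j}$ to produce a two-element subset of the right resonance degree) — are asserted (``I would check'', ``one finds'') rather than proved, and this is where the real content of the corollary lies. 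To repair the proposal you should redo the non-singleton parity computation carefully for $c_{I}\in\{2,3\}$ (keeping track of which part the surviving ratio belongs to, since $\gcd(d_{I},a-d_{I})$ is odd when $a$ is odd) and then carry out the existence arguments for each $p$, as the paper does.
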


Recall for the proof that $c_{I}$ denotes the cardinal of the set~$I$.
\begin{proof}
Propositions~\ref{prop:monodromieloop} and~\ref{prop:monodromieloopbis} provide the cycle decomposition of a set of generators of the monodromy group. If $I^{\complement}$ is a singleton, $\gamma_{I}$ is the product of $\frac{(d_{I}-1)!}{(d_{I}+2-p)!}$ disjoint cycles of order $d_{I}$. Consequently, the signature of $\gamma_{I}(\mathcal{F})$ is negative if and only if $d_{I}$ is even and $p \leq 4$. Let $b_{p}$ be the degree of the pole of $I^{\complement}$, we have $d_{I}=a+1-b_{p}$. Hence $d_{I}$ is even if and only if $a$ and $b_{p}$ do not have the same parity.\newline
Similarly, if neither $I$ nor $I^{\complement}$ are singletons, $\gamma_{I}$ is the product of $\frac{(d_{I}-1)! \, (a-d_{I}-1)!\, \gcd(d_{I},a-d_{I})}{(d_{I}+1-c_{I})! \, (a+1-p+c_{I}-d_{I})!}$ disjoint cycles of order $\lcm(d_{I},a-d_{I})$. Therefore, the signature of $\gamma_{I}$ is positive if $d_{I}$ and $a-d_{I}$ are both odd, since in this case cycles are of odd order. It is also positive if $d_{I}$ and $a-d_{I}$ are both even, since there is an even number of cycles of the same order. Thus, such permutations always have positive signature if $a$ is even.\newline

If $a$ is odd, $\gamma_{I}$ is a product of disjoint cycles of even order. If $c_{I} \geq 4$ or $p-c_{I} \geq 4$, the quotient of factorials that gives the number of these cycles is always even and $\gamma_{I}$ has positive signature. Besides, $\gcd(d_{I},a-d_{I})$ is automatically an odd number and $\lcm(d_{I},a-d_{I})$ is an even number. If $p=4$ and $c_{I}=2$, there are $\gcd(d_{I},a-d_{I})$ cycles of order $\lcm(d_{I},a-d_{I})$. Thus, $\gamma_{I}$ has negative signature in this case. If $p=5$ and $c_{I}=3$, the number of cycles is $(d_{I}-1)\gcd(d_{I},a-d_{I})$. Thus, $\gamma_{I}$ has positive signature if and only if $d_{I}$ is odd. Similarly, if $c_{I}=2$, then the number of cycles is $(a-d_{I}-1)\gcd(d_{I},a-d_{I})$ and $\gamma_{I}$ has positive signature if and only if $d_{I}$ is even. Finally, if $p=6$ and $c_{I}=3$, the number of cycles is a $(d_{I}-1)(a-d_{I}-1)\gcd(d_{I},a-d_{I})$. The two first factors have distinct parities so $\gamma_{I}$ always has positive signature.\newline

Then, for every stratum we can decide if there is a generator with negative signature. If $p=3$, every nontrivial partition displays a singleton. If $a$ is odd, a generator is of negative signature if and only if the corresponding pole is of even degree. Therefore, the monodromy group is not contained in the alternating group of the fiber if and only if at least one pole is of even degree while $a$ is odd. If the three poles are of odd degree, every generator has positive signature.\newline
If $p=3$ and $a$ is even, a generator is of negative signature if and only if the corresponding pole is of odd degree. Therefore, the monodromy group is contained in the alternating group of the fiber if and only if every pole is of even degree.\newline

If $p=4$ and $a$ is odd, we already found a generator of negative signature. If $a$ is even, the only generators that may have a negative signature display a singleton in the partition. Such generators have a negative signature if and only if the corresponding pole has odd degree.\newline
If $p \geq 5$ and $a$ is even, every generator has positive signature.\newline
If $p=5$ and $a$ is odd, the generator corresponding to a subset $I$ of degree $d_{I}$ with two elements has positive signature if and only if $d_{I}$ is even. If $I$ has three elements, then this happens if and only if $d_{I}$ is odd. If the five poles are of odd order, then every generator has positive signature. Otherwise, it is always possible to get a subset $I$ of two elements such that $d_{I}$ is odd.\newline
If $p \geq 6$ and $a$ is odd, every generator has positive signature.
\end{proof}

Finally note that Propositions~\ref{prop:monodromieloop} and~\ref{prop:monodromieloopbis} can be used to get some informations on~$W_{p}$. Since the action by monodromy of loops $\gamma_{I}$ on some elements of some fibers has periodic points of arbitrarily high order, they cannot be torsion elements. This result can be deduced from the study of the Orlik-Solomon algebra of the arrangement, which is a combinatorial structure isomorphic to the cohomology of the complement of the arrangement as explained in Chapter~3 of \cite{OT}.\newline

\subsection{Commutations relations}\label{sec:monodromierel}

In this section, we compute the commutator of two elements $\gamma_{I}$ and $\gamma_{J}$. We show that there are either trivial or torsion elements of order two or three.\newline

In order to compute the commutators of the loops around two distinct resonance hyperplanes, we first make a distinction between the mutual intersections between the corresponding partitions.
\begin{defn}\label{def:secpar}
For two nonempty strict subsets $I$ and $J$ of $\{1,\dots,p\}$, we consider the four mutual intersections $I \cap J$, $I^{\complement} \cap J$, $I \cap J^{\complement}$, $I^{\complement} \cap J^{\complement}$. If these four subsets are nonempty, we say that $I$ and $J$ are \textit{secant}. If among these four subsets, only three are nonempty, we say that $I$ and $J$ are \textit{parallel}. 
% If only two of them are nonempty, then the two partitions are in fact the same.
\end{defn}

Note that at least two  of the intersections in Lemma~\ref{def:secpar} are nonempty, and if precisely two are non empty, then the two partitions are the same.
In $\{1,2,3,4\}$,  the subsets $I=\{1,2\}$ and $J=\{1,3\}$ are secant and  $I=\{ 2 \}$ and $J=\{ 1,3 \}$ are parallel.

If the partitions corresponding to two resonance hyperplanes are secant, the monodromy action of each of the corresponding loops on any fiber commute.

\begin{lem}\label{lem:comute}
For any stratum $\mathcal{H}$, if $I$ and $J$ are secant, then the monodromy actions of~$\gamma_{I}$ and $\gamma_{J}$ on the isoresidual fibration commute.
\end{lem}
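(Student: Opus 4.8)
Following the strategy of this section, the plan is to work with a real fiber and to track the effect of the two loops on decorated trees, exactly as in the proofs of Propositions~\ref{prop:monodromieloop} and~\ref{prop:monodromieloopbis}. First I would pick a real configuration of residues $\lambda$ in the complement of $\mathcal{A}_{p}$, lying in a chamber $\mathcal{C}$ that sits very close to a generic point of the codimension-two subspace $A_{I}\cap A_{J}$, so that both $\left|\sum_{j\in I}\lambda_{j}\right|$ and $\left|\sum_{j\in J}\lambda_{j}\right|$ are extremely small in modulus compared with every other partial sum of the residues. Such a chamber $\mathcal{C}$ is adjacent to both $A_{I}$ and $A_{J}$, so $\gamma_{I}$ and $\gamma_{J}$ can be represented by small loops based at $\lambda$, each winding once around the corresponding hyperplane and around no other hyperplane. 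By Lemma~\ref{lem:arbresdiff} the fiber $\mathcal{F}$ over $\lambda$ is identified with the set $\mathcal{T}(b_{1},\dots,b_{p},\psi_{\mathcal{F}})$ of compatible decorated trees.

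The combinatorial heart of the argument is the claim that, since $I$ and $J$ are secant, no tree in $\mathcal{T}(b_{1},\dots,b_{p},\psi_{\mathcal{F}})$ can carry both an edge separating its vertices into $I\sqcup I^{\complement}$ and an edge separating them into $J\sqcup J^{\complement}$. A single edge realizing both partitions would force $\{I,I^{\complement}\}=\{J,J^{\complement}\}$, hence $I=J$ or $I=J^{\complement}$, in either case leaving only two of the four mutual intersections nonempty, contrary to the hypothesis. Two distinct such edges would cut the tree into exactly three subtrees whose vertex sets form a partition $A\sqcup B\sqcup C$ with $B$ lying between $A$ and $C$; then $\{I,I^{\complement}\}=\{A,\,B\cup C\}$ and $\{J,J^{\complement}\}=\{C,\,A\cup B\}$, and in each of the four possible assignments the intersection pairing $A$ with $C$ is empty, again contradicting secancy. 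Consequently $\mathcal{F}$ is the disjoint union of three classes of trees: those carrying an $I$-edge, those carrying a $J$-edge, and those carrying neither.

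To conclude I would invoke the mechanism of Propositions~\ref{prop:monodromieloop} and~\ref{prop:monodromieloopbis}: because $\sum_{j\in I}\lambda_{j}$ is tiny and $\gamma_{I}$ winds around $A_{I}$ only, $\gamma_{I}$ fixes every tree with no $I$-edge and merely permutes the trees carrying an $I$-edge among themselves, and symmetrically for $\gamma_{J}$. Hence $\gamma_{J}$ acts as the identity on the class of trees with an $I$-edge, $\gamma_{I}$ acts as the identity on the class with a $J$-edge, and both act trivially on the remaining trees. On each of the three classes $\gamma_{I}\gamma_{J}$ and $\gamma_{J}\gamma_{I}$ therefore agree, so they agree on all of $\mathcal{F}$. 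Since $\gamma_{I}$ and $\gamma_{J}$ (with a common base point) are determined in $W_{p}$ up to simultaneous conjugation, and commutation is preserved under simultaneous conjugation, this establishes the lemma for any base point.

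The step I expect to require the most care is the second one, namely a clean verification that two secant partitions cannot both be cut out by edges of one tree, together with the justification that a loop around $A_{I}$ based near a generic point of $A_{I}\cap A_{J}$ really leaves all $J$-edges untouched. The latter point admits a transparent topological reformulation which I would at least use as a sanity check: if $I$ and $J$ are secant, no third resonance hyperplane contains $A_{I}\cap A_{J}$ (indeed, if $A_{K}\supseteq A_{I}\cap A_{J}$ then the linear form $\sum_{k\in K}\lambda_{k}$ is a combination of $\sum_{i\in I}\lambda_{i}$ and $\sum_{j\in J}\lambda_{j}$, and matching the coefficients on the four nonempty blocks $I\cap J,\ I\cap J^{\complement},\ I^{\complement}\cap J,\ I^{\complement}\cap J^{\complement}$ forces $K\in\{I,I^{\complement},J,J^{\complement}\}$). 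Thus near a generic point of $A_{I}\cap A_{J}$ the arrangement is analytically a union of two transverse hyperplanes, the local fundamental group is $\mathbb{Z}^{2}$ generated by $\gamma_{I}$ and $\gamma_{J}$, and these already commute in $W_{p}$, hence in the monodromy group $\mathcal{M}_{\mathcal{H}}$.
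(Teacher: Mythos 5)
Your proposal is correct and follows essentially the same route as the paper: pass to a real fiber in a chamber adjacent to both $A_{I}$ and $A_{J}$, identify it with decorated trees, use the mechanism of Propositions~\ref{prop:monodromieloop} and~\ref{prop:monodromieloopbis} to see that $\gamma_{I}$ (resp.\ $\gamma_{J}$) only moves trees carrying an $I$-edge (resp.\ a $J$-edge), and observe that secancy forbids a tree from carrying both, so the two permutations have disjoint supports and commute. Your write-up actually supplies two details the paper leaves implicit --- the case analysis showing no tree admits both separating edges, and the observation that no third resonance hyperplane contains $A_{I}\cap A_{J}$ --- but the underlying argument is the same.
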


\begin{proof}
We consider a chamber $\mathcal{C}$ of real fibers adjacent to both hyperplanes $A_{I}$ and~$A_{J}$. To show that such a chamber exists, we choose a sign function $\psi$ such that $\psi(I \cap J)=\psi(I^{\complement} \cap J^{\complement})=+$, $\psi(I \cap J^{\complement})=\psi(I^{\complement} \cap J)=-$ and the sign of a subset belonging to an intersection is the same as this one. Then in the boundary of the corresponding chamber we can choose configuration $\lambda$ in such a way that the sum indexed by $I \cap J$ is the opposite of the sum index by $I \cap J^{\complement}$ and no other relations holds. This shows that $\mathcal{C}$ is adjacent to~$A_{I}$. One shows similarly that $\mathcal{C}$ is adjacent to~$A_{J}$. \newline
Now choose let us consider a fiber $\mathcal{F}$ in~$\mathcal{C}$.  We can show as in Propositions~\ref{prop:monodromieloop} and~\ref{prop:monodromieloopbis} that the monodromy action of $\gamma_{I}$ acts trivially on every element of $\mathcal{F}$ whose decorated tree does not have an edge $e_{1}$ corresponding to partition $I \cap I^{\complement}$. For~$\gamma_{J}$ the same holds with an edge~$e_{2}$. Since $I$ and $J$ are secant, there exist no $1$-form in $\mathcal{F}$ whose decorated tree has both edges $e_{1}$ and~$e_{2}$. Therefore, the loops $\gamma_{I}$ and $\gamma_{J}$ act nontrivially on disjoint subsets of~$\mathcal{F}$. Hence they commute.
\end{proof}

In contrast, the monodromy actions of two loops such that the partitions are parallel do not commute in general. We use the standard notation $[g,h]=g^{-1}h^{-1}gh$ for the commutator of $g$ and~$h$.
\newline

\begin{lem}\label{lem:threecycle}
Let $\mu = (a,-b_{1},\dots,-b_{p})$ be a partition of $-2$ such that $b_{1}=1$ and $I \sqcup J \sqcup K$  be a partition of $\{1,\dots,p\}$ such that $K=\{1\}$ 
% $d_{K}=0$
and at least one of the following conditions holds:
\begin{enumerate}[(i)]
 \item $d_{I},d_{J} \geq 1$;
 \item $d_{I}=0$, $d_{J} \geq 2$  and $J$ is not a singleton (up to permute $I$ and $J$).
\end{enumerate}
Then the commutator $[\gamma_{I},\gamma_{J}]$ is a product of $\frac{d_{I}!\, d_{J}!}{(d_{I}+1-c_{I})!\, (d_{J}+1-c_{J})!}$ disjoint cycles of order three.
\end{lem}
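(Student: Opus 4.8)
The strategy is to localise the computation to the edges of the decorated trees affected by the two loops, exactly as in the proofs of Propositions~\ref{prop:monodromieloop} and~\ref{prop:monodromieloopbis}. Fix a chamber $\mathcal{C}$ of real fibers adjacent to both hyperplanes $A_{I}$ and $A_{J}$; such a chamber exists because $K=\{1\}$ makes $I$ and $J$ parallel and we can choose a sign function with $\psi(I)=+$, $\psi(J)=-$, $\psi(\{1\})$ of either sign, and the partial sums over $I$ and $J$ both near zero but not proportional. Pick a fiber $\mathcal{F}\in\mathcal{C}$ in which both $|\sum_{j\in I}\lambda_j|$ and $|\sum_{j\in J}\lambda_j|$ are very small relative to all other partial sums. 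Then $\gamma_I$ only moves the edge $e_I$ of a decorated tree separating the vertex set into $I$ and $I^\complement=J\sqcup K$, and $\gamma_J$ only moves the edge $e_J$ separating $J$ from $J^\complement = I\sqcup K$. The key combinatorial point is that a decorated tree in $\mathcal{T}(b_1,\dots,b_p,\psi_{\mathcal C})$ can carry both edges $e_I$ and $e_J$ simultaneously, and when it does, removing both splits the tree into three subtrees on $I$, $J$ and $K=\{1\}$, with the vertex $1$ sitting at the "center." So I would first argue that the trees fixed by the commutator $[\gamma_I,\gamma_J]$ are exactly those that do not carry \emph{both} $e_I$ and $e_J$, and count the rest: by the reasoning of Proposition~\ref{prop:distributiondeg} applied twice, the number of trees carrying both edges and a given pair of subtree-shapes on $I$ and $J$ is $d_I(a-d_I-d_J-?)\cdots$ — more precisely the three-part analogue gives $\frac{d_{I}!\,d_{J}!}{(d_{I}+1-c_{I})!\,(d_{J}+1-c_{J})!}$ such trees overall once the geometry around the central vertex~$1$ is accounted for, using that $b_1=1$ so vertex~$1$ is a leaf with a single corner.

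Next I would analyse the local picture at the leaf~$1$. Since $b_1=1$, the vertex $1$ has $2b_1-2=0$ half-edges, so it meets only the edges $e_I$ (oriented, say, towards $1$ from the $I$-side) and $e_J$; the cyclic order of $e_I$, $e_J$ at this vertex is the only local datum. The monodromy $\gamma_I$ rotates $e_I$ one allowed corner clockwise inside the $I$-subtree and, on the other side, one allowed corner clockwise among the corners of $J\sqcup K$ around vertex~$1$; similarly $\gamma_J$ rotates $e_J$. Because both rotations act near vertex~$1$, the composition is governed by how two "pointers" attached to the same central leaf interact. I would reduce the orbit structure of $[\gamma_I,\gamma_J]$ to the orbit structure of an explicit permutation of a small finite set modeling the cyclic positions of $e_I$ and $e_J$ around vertex~$1$ together with their positions in the $I$- and $J$-subtrees — the analogue of the gear model, but now with three gears sharing the central axle. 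The claim to verify is that this model permutation is a product of $3$-cycles, i.e. has order exactly~$3$ on the relevant set; the cases (i) $d_I,d_J\ge 1$ and (ii) $d_I=0$, $d_J\ge 2$, $J$ not a singleton are precisely the cases where this local permutation is nontrivial and the "commutator = braid-type relation" forces order three, mirroring the classical fact that around a codimension-two stratum where three hyperplanes meet pairwise transversally one gets braid relations.

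Finally I would count the cycles. Once it is established that $[\gamma_I,\gamma_J]$ acts as a $3$-cycle on each "class" of trees sharing fixed shapes of the $I$- and $J$-subtrees and a fixed configuration of all edges other than $e_I$ and $e_J$, the number of such classes is the product of the number of shapes of the $I$-subtree, namely $\frac{d_I!}{(d_I+1-c_I)!}$ by Theorem~\ref{thm:numerofibre} applied to the $I$-part (with its extra leaf), and of the $J$-subtree, namely $\frac{d_J!}{(d_J+1-c_J)!}$; the $K=\{1\}$ part contributes a single shape. Hence $[\gamma_I,\gamma_J]$ is a product of $\frac{d_I!\,d_J!}{(d_I+1-c_I)!\,(d_J+1-c_J)!}$ disjoint $3$-cycles, as claimed.

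\textbf{Main obstacle.} The delicate step is the second one: justifying rigorously that the local interaction of the two pointers $e_I$ and $e_J$ at the central leaf~$1$ produces a permutation of order exactly three (not, say, six or a longer cycle), and identifying precisely the two hypotheses (i)--(ii) as the condition for it to be nontrivial. This is where one must be careful about the parity/corner-counting conditions in Definition~\ref{def:abstractdecoratedtree} (even vs.\ odd number of half-edges between edges of equal vs.\ opposite direction) near vertex~$1$, and where the role of $b_1=1$ is essential — it is what collapses the central vertex to a single corner and makes the two rotations interact in a braid-like, order-three fashion rather than independently (which would give a trivial commutator by Lemma~\ref{lem:comute}-type reasoning) or in a more complicated way. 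I expect the cleanest route is to write down the $3$-element model set explicitly in the two cases and check by hand that the commutator of the two local rotations is a $3$-cycle, then bootstrap to the global count via Proposition~\ref{prop:distributiondeg}.
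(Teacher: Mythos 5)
Your outline follows the paper's general strategy (localize to a chamber adjacent to both $A_I$ and $A_J$, isolate the trees in which the two subtrees on $I$ and $J$ are both attached to the half-edge-free vertex $1$, count them via Proposition~\ref{prop:distributiondeg}/Theorem~\ref{thm:numerofibre}), but the heart of the proof is missing and one of your stated steps is wrong. You reduce everything to "the claim to verify is that this model permutation is a product of $3$-cycles" and only gesture at braid-type relations around a codimension-two intersection; that heuristic is not the mechanism, and it would even mislead you in the adjacent case $d_K\geq 1$ (Lemma~\ref{lem:discomu}), where the commutator is a product of transpositions. The actual argument is a short telescoping computation: for a tree $t$ carrying both edges, $\gamma_J(t)$ has the $J$-subtree re-attached onto the $I$-subtree, so it no longer carries an edge realizing $I\cup I^{\complement}$ and $\gamma_I$ fixes it; hence $[\gamma_I,\gamma_J](t)=\gamma_I^{-1}\gamma_J^{-1}\gamma_I\gamma_J(t)=\gamma_I^{-1}(t)$, and similarly $[\gamma_I,\gamma_J](\gamma_I^{-1}(t))=\gamma_J^{-1}(t)$ and $[\gamma_I,\gamma_J](\gamma_J^{-1}(t))=t$, all other elements of the two orbits through $t$ being fixed; hypotheses (i)--(ii) serve only to guarantee these three trees are distinct. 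This also shows your first step ("fixed if and only if the tree does not carry both $e_I$ and $e_J$") is false: the commutator moves $\gamma_I^{-1}(t)$ and $\gamma_J^{-1}(t)$, which carry only one of the two edges. Your own final count already contradicts that step, since $N$ disjoint $3$-cycles require $3N$ non-fixed trees while only $N$ trees carry both edges.

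There is also a concrete problem with your chamber: with $\psi(I)=+$ and $\psi(J)=-$, the two edges attaching the $I$- and $J$-subtrees to vertex $1$ would be one incoming and one outgoing at a vertex with $2b_1-2=0$ half-edges, violating the parity rule of Definition~\ref{def:abstractdecoratedtree} (an odd number of half-edges is required between edges of opposite directions). So in your chamber no decorated tree carries both edges, the set you propose to count is empty, and the localization collapses; one must instead take $\psi(K)=+$ and $\psi$ negative on every nonempty subset of $I\cup J$, so that both edges point into vertex $1$. Your final enumeration (each $3$-cycle corresponding to a pair of decorated subtrees with their extra leaf, counted by $\frac{d_I!}{(d_I+1-c_I)!}\cdot\frac{d_J!}{(d_J+1-c_J)!}$, vertex $1$ contributing a unique local configuration) is correct, but without the corrected sign function and the explicit computation above the proof does not go through.
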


\begin{proof}
The fact that $K=\{1\}$ and $b_{1}=1$ implies that  $d_{K}=0$. Analogously to the proof of Lemma~\ref{lem:comute}, we consider a chamber $\mathcal{C}$ of real fibers adjacent to two adequate resonance hyperplanes. We choose a sign function $\psi$ such that $\psi(K)= +$ and  $\psi(Y)=-$ for any nonempty subset $Y \subset I \cup J$. This chamber is then adjacent to $A_{I}$ and $A_{J}$.\newline
Let us consider a fiber $\mathcal{F}$ of~$\mathcal{C}$.
Following Propositions~\ref{prop:monodromieloop} and~\ref{prop:monodromieloopbis}, the permutation $\gamma_{I}$ acts nontrivially on an element of $\mathcal{F}$ if and only their decorated tree has an edge corresponding to partition $I \cup I^{\complement}$. This action preserves the shape of the each of the two subtrees  corresponding to $I$ and $I^{\complement}$,  but modify the way there are glued to~$K$. The same holds for $\gamma_{J}$.\newline
The $1$-forms in $\mathcal{F}$ on which both $\gamma_{I}$ and $\gamma_{J}$ act nontrivially are those whose decorated tree has two edges connecting the two subtrees $I$ and $J$ to the vertex corresponding to~$K$. The latter vertex is of weight one so it does not have any half-edge. We denote by $D$ the subset of~$\mathcal{F}$ on which both $\gamma_{I}$ and $\gamma_{J}$ act non trivially. A computation similar to the one done in Proposition~\ref{prop:distributiondeg} shows that the cardinal of~$D$ is $\frac{d_{I}!\, d_{J}!}{(d_{I}+1-c_{I})! \, (d_{J}+1-c_{J})!}$.\newline
 The fact that one of the two conditions (i) and (ii) holds implies that both $\gamma_{I}$ and $\gamma_{J}$ act on any element of $D$ as a cycle of length at least two. Any orbit of $\gamma_{I}$, resp. $\gamma_{J}$, that contains an element of $D$ then contains a unique element of~$D$.   Indeed, the subtree corresponding to $I$ is moved on some corner of $J$ and there is no edge connecting $J$ to $I \cup K$ in these trees.  Moreover, it is easy to check that there is precisely one element of $D$ in each cycle of $\gamma_{I}$ and $\gamma_{J}$. Consequently, the fiber $\mathcal{F}$ splits into a subset where $\gamma_{I}$ and $\gamma_{J}$ act trivially and a disjoint union of orbits of $\gamma_{I}$ and $\gamma_{J}$ intersecting at exactly one element of~$D$. We call these pairs of orbits of  $\gamma_{I}$ and $\gamma_{J}$ {\em eight-shaped}. We picture these orbits in Figure~\ref{fig:huit} in the case of the stratum $\mathcal{H}(4,-1,-2,-3)$. The general case is given by replacing the vertices labeled by $2$ and $3$ by the subtrees corresponding to $I$ and~$J$.\newline
 \begin{figure}[ht]
\begin{tikzpicture}[scale=1,decoration={
    markings,
    mark=at position 0.5 with {\arrow[very thick]{>}}}]
   
\node[circle,draw] (a) at  (-2,0) {$p_{1}$};
\node[circle,draw] (b) at  (0,0) {$p_{2}$};
\node[circle,draw] (c) at  (2,0) {$p_{3}$};
\draw [postaction={decorate}]  (b) --  (a);
\draw[postaction={decorate}] (c) -- (b);

\draw (c) -- ++(60:.6);
\draw(c) -- ++(-60:.6);
\draw (c) -- ++(120:.6);
\draw (c) -- ++(240:.6);
\draw(b) -- ++(-90:.6);
\draw (b) -- ++(90:.6);

\begin{scope}[yshift=-2cm]
\node[circle,draw] (a) at  (-2,0) {$p_{2}$};
\node[circle,draw] (b) at  (0,0) {$p_{1}$};
\node[circle,draw] (c) at  (2,0) {$p_{3}$};
\draw [postaction={decorate}]  (a) --  (b);
\draw[postaction={decorate}] (c) -- (b);

 \draw (a) -- ++(120:.6);
\draw (a) -- ++(240:.6);
\draw (c) -- ++(60:.6);
\draw(c) -- ++(-60:.6);
\draw (c) -- ++(120:.6);
\draw (c) -- ++(240:.6);

 \draw[->] (3,.1) .. controls ++(30:.4) and ++(-30:.4) .. (3,1.9) coordinate[pos=.5] (d);
 \node[right] at (d) {$\gamma_{3}$};
  \draw[->] (-3,1.9) .. controls ++(-150:.4) and ++(150:.4) .. (-3,.1) coordinate[pos=.5] (d);
 \node[left] at (d) {$\gamma_{3}$};
  \draw[->] (4,-1.3) .. controls ++(90:.4) and  ++(0:.4) .. (3,-.1) coordinate[pos=.5] (d);
 \node[right] at (d) {$\gamma_{2}$};
  \draw[->] (-3,-.1) .. controls ++(180:.4) and ++(90:.4) .. (-4,-1.3) coordinate[pos=.5] (d);
 \node[left] at (d) {$\gamma_{2}$};
 \end{scope}

\begin{scope}[yshift=-4cm]
\begin{scope}[xshift=-4cm]
\node[circle,draw] (a) at  (-2,0) {$p_{1}$};
\node[circle,draw] (b) at  (0,0) {$p_{3}$};
\node[circle,draw] (c) at  (2,0) {$p_{2}$};
\draw [postaction={decorate}]  (b) --  (a);
\draw[postaction={decorate}] (c) -- (b);

\draw (c) -- ++(60:.6);
\draw(c) -- ++(-60:.6);
\draw(b) -- ++(-90:.6);
\draw (b) -- ++(45:.6);
\draw (b) -- ++(90:.6);
\draw (b) -- ++(135:.6);

  \draw[->] (2.7,-.5) .. controls ++(-30:.4) and ++(-150:.4) .. (5.3,-.5) coordinate[pos=.5] (d);
 \node[above] at (d) {$\gamma_{2}$};
\end{scope}

\begin{scope}[xshift=4cm]
\node[circle,draw] (a) at  (-2,0) {$p_{1}$};
\node[circle,draw] (b) at  (0,0) {$p_{3}$};
\node[circle,draw] (c) at  (2,0) {$p_{2}$};
\draw [postaction={decorate}]  (b) --  (a);
\draw[postaction={decorate}] (c) -- (b);

\draw (c) -- ++(60:.6);
\draw(c) -- ++(-60:.6);
\draw(b) -- ++(90:.6);
\draw (b) -- ++(-45:.6);
\draw (b) -- ++(-90:.6);
\draw (b) -- ++(-135:.6);
\end{scope}
\end{scope}
 \end{tikzpicture}
 \caption{The action of $\gamma_{2}$ and $\gamma_{3}$ on the fiber above the chamber $\mathcal{C}$ for the stratum $\mathcal{H}(4,-1,-2,-3)$ (the arrows that are trivial are not represented).} \label{fig:huit}
\end{figure}
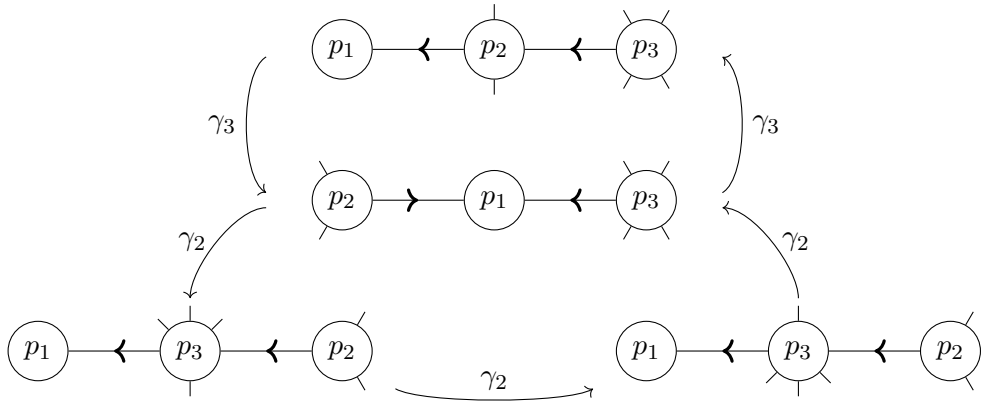
We consider a eight-shaped pair of orbits and let $t$ be its unique element that belongs to~$D$. Since the decorated tree $\gamma_{J}(t)$ is invariant by the action of $\gamma_{I}$,  we have the equality $[\gamma_{I},\gamma_{J}](t)=\gamma_{I}^{-1}(t)$.
Similarly, we get $[\gamma_{I},\gamma_{J}](\gamma_{I}^{-1}(t))=\gamma_{J}^{-1}(t)$ and
$[\gamma_{I},\gamma_{J}](\gamma_{J}^{-1}(t))=t$.
Therefore, the decorated trees $(t~~\gamma_{I}^{-1}(t)~~\gamma_{J}^{-1}(t))$ is a cycle of order three of the commutator of~$\gamma_{I}$ and~$\gamma_{J}$.\newline
For the elements of the eight-shaped pair of orbits different from~$t$, $\gamma_{I}^{-1}(t)$ and $\gamma_{J}^{-1}(t)$, neither the element nor its image by $\gamma_{I}$ or $\gamma_{J}$ belong to~$D$. Thus, the commutator simplifies into a product of two inverse elements and acts thus trivially on the fiber.
\end{proof}

Note that if both conditions (i) and (ii) of  Lemma~\ref{lem:threecycle} are not satisfied, then the monodromy action of one of the two loops is trivial (and would thus commute with any other element of the monodromy group), see Corollary \ref{cor:trivmon}.\newline

We still have to address the question of commutation relations in the case where $d_{K} \geq 1$. In that case, the commutators do not decompose into disjoint cycles of order three, but into an even number of disjoint transpositions.

\begin{lem}\label{lem:discomu}
Let $\mu = (a,-b_{1},\dots,-b_{p})$ be a partition of $-2$ and $I \sqcup J \sqcup K$  be a partition of $\{1,\dots,p\}$ such that  $d_{K} \geq 1$ and $d_{I}+d_{J} \geq 1$. Then the commutator $[\gamma_{I},\gamma_{J}]$ is an even product of disjoint transpositions.
\end{lem}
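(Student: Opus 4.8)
The plan is to run the argument of Lemma~\ref{lem:threecycle} in the present situation --- where $I\sqcup J\sqcup K$ is again a partition, so that $I$ and $J$ are \emph{parallel} and Lemma~\ref{lem:comute} does not apply, but now the central block $K$ has positive resonance degree --- and then to deduce the parity statement for free from the fact that the signature of any commutator is trivial. So the real content is to show that $[\gamma_{I},\gamma_{J}]$ is an involution.

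First I would fix, exactly as in the proofs of Lemmas~\ref{lem:comute} and~\ref{lem:threecycle}, a chamber $\mathcal{C}$ of the real arrangement $\mathbb{R}\mathcal{A}_{p}$ adjacent to both $A_{I}$ and $A_{J}$: one chooses a sign function $\psi$ that is negative on every nonempty subset of $I\cup J$ and whose $K$-residues have large modulus (so that the sign of every partial sum meeting $K$ is governed by the $K$-part alone), so that from inside $\mathcal{C}$ one can let $\sum_{j\in I}\lambda_{j}\to0$ and, at another wall, $\sum_{j\in J}\lambda_{j}\to0$ without creating any other resonance. Then I would work with a real fiber $\mathcal{F}$ of $\mathcal{C}$ in which $|\sum_{j\in I}\lambda_{j}|$ and $|\sum_{j\in J}\lambda_{j}|$ are tiny compared with every other partial sum, so that by Propositions~\ref{prop:monodromieloop} and~\ref{prop:monodromieloopbis} the loop $\gamma_{I}$ fixes every decorated tree of $\mathcal{F}$ with no edge realising the partition $I\cup I^{\complement}$, and symmetrically for $\gamma_{J}$.

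Next I would isolate the set $D\subseteq\mathcal{F}$ on which both $\gamma_{I}$ and $\gamma_{J}$ act nontrivially. As in Lemma~\ref{lem:threecycle}, a decorated tree lies in $D$ exactly when $I$ and $J$ are both clades; a short argument then shows such a tree must consist of a central subtree $\mathfrak{t}_{K}$ on the vertices of $K$ together with subtrees $\mathfrak{t}_{I}$ and $\mathfrak{t}_{J}$ attached to $\mathfrak{t}_{K}$ by single edges $e_{1},e_{2}$ oriented towards $\mathfrak{t}_{K}$ (neither attached to the other subtree), and one checks, just as for the elements away from $D$ in the proof of Lemma~\ref{lem:threecycle}, that $[\gamma_{I},\gamma_{J}]$ fixes $\mathcal{F}\setminus D$ pointwise. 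The essential difference from Lemma~\ref{lem:threecycle} is that $d_{K}\ge1$ forces $\mathfrak{t}_{K}$ to carry half-edges or internal edges, so that it genuinely separates the admissible corners available to $e_{1}$ from those available to $e_{2}$ and acts as a buffer between $\mathfrak{t}_{I}$ and $\mathfrak{t}_{J}$. The core of the argument is then to trace $\gamma_{I}^{-1}\gamma_{J}^{-1}\gamma_{I}\gamma_{J}$ on $D$: one step of $\gamma_{I}$ moves $e_{1}$ to the next admissible corner along $\mathfrak{t}_{I^{\complement}}$ (which is $\mathfrak{t}_{K}$ with $\mathfrak{t}_{J}$ glued along $e_{2}$) while rotating the interior attaching corner of $\mathfrak{t}_{I}$, and symmetrically for $\gamma_{J}$; thanks to the buffer $\mathfrak{t}_{K}$ the two moves genuinely interfere only on the trees where $e_{1}$ and $e_{2}$ meet a common corner of $\mathfrak{t}_{K}$, and on those the commutator acts by an order-two surgery exchanging a pair of distinct trees, while on every orbit of $\gamma_{I}$ or $\gamma_{J}$ through $D$ of any other type it collapses to the product of an element and its inverse and hence acts trivially --- exactly the phenomenon appearing in the last step of the proof of Lemma~\ref{lem:threecycle}. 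This shows that $[\gamma_{I},\gamma_{J}]$ is an involution, i.e.\ a product of disjoint transpositions.

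Finally, the parity is automatic: every commutator has signature $+1$, so if $[\gamma_{I},\gamma_{J}]$ is a product of $k$ disjoint transpositions then $(-1)^{k}=1$ and $k$ is even. The step I expect to be the real obstacle is the combinatorial bookkeeping in the previous paragraph: describing explicitly, in terms of the admissible corners of $\mathfrak{t}_{K}$ and of the simultaneous rotation of the interior corners (the maps $\varphi$ from the proof of Proposition~\ref{prop:monodromieloopbis}, the exceptional symmetry when $I$, $J$ or a part of $K$ reduces to a pole of order two, and the hypothesis $d_{I}+d_{J}\ge1$ ensuring the relevant moves are nontrivial), which trees of $D$ are actually displaced by the commutator and verifying carefully that the resulting permutation squares to the identity.
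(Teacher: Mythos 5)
Your framework is the paper's: the same choice of a real chamber adjacent to both $A_{I}$ and $A_{J}$ (sign function positive on subsets of $K$, negative on subsets of $I\cup J$), the same subset $D\subset\mathcal{F}$ of trees on which both loops act nontrivially, and the same eight-shaped pairs of orbits as in Lemma~\ref{lem:threecycle}; moreover your remark that the evenness is automatic because a commutator has trivial signature is a legitimate shortcut for that half of the statement (the paper instead gets it from an explicit count of two transpositions per pair of orbits). However, the heart of the lemma --- that $[\gamma_{I},\gamma_{J}]$ is a product of disjoint transpositions --- is precisely the step you defer as ``the real obstacle'', and the two concrete assertions you do make about it are not correct. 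First, $[\gamma_{I},\gamma_{J}]$ does not fix $\mathcal{F}\setminus D$ pointwise: already in Lemma~\ref{lem:threecycle} the $3$-cycles are $(t~~\gamma_{I}^{-1}(t)~~\gamma_{J}^{-1}(t))$ with $t\in D$ but $\gamma_{I}^{-1}(t),\gamma_{J}^{-1}(t)\notin D$, and here likewise each transposition pairs a tree of $D$ with a tree \emph{outside} $D$, namely one in which one of the two subtrees hangs off the other. Second, the support is not ``the trees where $e_{1}$ and $e_{2}$ meet a common corner of $\mathfrak{t}_{K}$''; what the paper actually proves is a block analysis: inside each eight-shaped pair of orbits the intersection with $D$ is a run of $d_{K}+1$ consecutive elements (for the cyclic order of either generator), shared by exactly one nontrivial orbit of the other generator; the commutator telescopes to the identity everywhere except at the two ends of this run, where the compatibility ``if $t\in D$ and $\gamma_{J}(t)\notin D$, then $t=\gamma_{I}(s)$ with $s\notin D$'' (and its analogue with $I$ and $J$ exchanged) forces $[\gamma_{I},\gamma_{J}]$ to swap $t$ with $s$. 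The hypothesis $d_{K}\geq 1$ then enters not as a vague ``buffer'' but precisely to guarantee that the run has at least two elements, so that the two boundary transpositions are disjoint; for $d_{K}=0$ the two ends coincide and one gets instead the $3$-cycles of Lemma~\ref{lem:threecycle}.

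Without this identification of which trees are displaced and a verification that the two-cycles close up (one must check, at each end configuration, that the two one-step moves agree, e.g.\ $\gamma_{J}^{-1}(t)=\gamma_{I}(t)$, which is what gives $[\gamma_{I},\gamma_{J}](s)=t$ and not a longer cycle), the claim that the commutator is an involution is unsupported. So as it stands your proposal reproduces the paper's setup and adds a clean parity argument, but leaves the decisive combinatorial content of the lemma unproven and partially misdescribes where the nontrivial action takes place.
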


\begin{proof}
We consider a chamber $\mathcal{C}$ of real fibers such that two adequate resonance hyperplanes are incident to $\mathcal{C}$. We choose a sign function which is positive for any nonempty subset of $K$ and negative for any nonempty subset of $I \cup J$. Then, we choose a fiber $\mathcal{F}$ of~$\mathcal{C}$. This chamber is then adjacent to $A_{I}$ and $A_{J}$.\newline
Following Propositions \ref{prop:monodromieloop} and~\ref{prop:monodromieloopbis}, a permutation $\gamma_{I}$ acts nontrivially on an element of $\mathcal{F}$ if and only if its decorated tree has an edge corresponding to partition $I \cup I^{\complement}$. This action preserves the shape of the subtrees corresponding to $I$ and $I^{\complement}=J\cup K$, while moves the edge connecting these two subtrees. A similar description holds for~$\gamma_{J}$.\newline
The elements of $\mathcal{F}$ on which both $\gamma_{I}$ and $\gamma_{J}$ act nontrivially are those whose decorated tree has two edges connecting the two subtrees $I$ and $J$ to subtree $K$. We denote by~$D$ this subset of $\mathcal{F}$. A computation similar to that of Proposition \ref{prop:distributiondeg} shows that the cardinal of~$D$ is $\frac{d_{I}!\, d_{J}!\, (d_{K}+1)!}{(d_{I}+1-c_{I})!\, (d_{J}+1-c_{J})!\, (d_{K}+1-c_{K})!}$.\newline
The loop $\gamma_{I}$ acts by monodromy as a product of disjoint cycles of order $\lcm(d_{I},d_{J}+d_{K}+1)$ if $c_{I} \geq 2$ and of order $d_{J}+d_{K}+1$ if $I$ is a singleton. A similar statement holds for $\gamma_{J}$. In particular, both loops act nontrivially.\newline
In any nontrivial orbit of $\gamma_{I}$, the intersection with $D$ is either empty are formed by $d_{K}+1$ consecutive elements for the cyclic order. These elements are shared by exactly one nontrivial orbit of $\gamma_{J}$. Then, we essentially follow the end of the proof of Lemma \ref{lem:threecycle} studying the eight-shaped pair of orbits. The difference is that the intersection of the two orbits is not a singleton. For any element $t$ of the orbit of $\gamma_{I}$ such that neither $t$ nor $\gamma_{I}(t)$ belongs to~$D$, the action of the commutator simplifies into a product of two inverse permutations. The decorated tree $t$ is thus preserved by $[\gamma_{I},\gamma_{J}]$. The same holds for an element $t$ of the orbit of $\gamma_{J}$ such that neither $t$ nor $\gamma_{J}(t)$ belongs to $D$.\newline
For any element $t$ of $D$ such that both $\gamma_{I}(t)$ and $\gamma_{J}(t)$ belongs to~$D$, it is clear that the actions of these two permutations are inverse from each other. Consequently, $[\gamma_{I},\gamma_{J}]$ also preserves these decorated trees.\newline

Among the $d_{K}+1$ elements of $D$ belonging to the pair of orbits, we can see from the action of the decorated trees that an element $t$ such that $\gamma_{J}(t)$ is not in $D$ satisfies $t=\gamma_{I}(s)$ where $s$ is not in $D$. An analog result is true by transposing $I$ and $J$. This implies that $[\gamma_{I},\gamma_{J}]$ acts as a transposition for these two pairs of elements. Since $d_{K} \geq 1$, these two pairs are disjoint and we  get a product of permutations.
\end{proof}

The last cases missed by Lemma \ref{lem:discomu} are also those for which the monodromy action of one of the two loops would be trivial, see  Corollary \ref{cor:trivmon}.\newline

\subsection{The exceptional family of strata of $1$-forms with two simple poles}
\label{sec:monod2sim}

In this section, we treat the case of the strata of genus~$0$ of the form $\mathcal{H}(a,-kc_{1},\dots,-kc_{p-2},-1,-1)$ with $k \geq 2$, $a=k \sum_{j=1}^{p-2} c_{j}$ and $c_{1},\dots,c_{p-2}$ coprime. In this case, we assign a topological invariant that will break the monodromy group of the isoresidual fibration into a semidirect product.\newline

 In such a stratum, we consider a generic fiber $\mathcal{F}$ with real residues such that the residue of the first simple pole is negative while the residue of the second simple pole is positive.
For any translation surface defined by a meromorphic $1$-form $\omega$ of $\mathcal{F}$, we consider the class of oriented nonsingular paths that go from the first simple pole to the other and that are vertical outside a compact set. For any such path $\alpha$ we denote by
\begin{equation*}
 \theta(t)= \frac{ \omega( \alpha'(t))}{|\omega( \alpha'(t))|}
\end{equation*}  the Gauss map and by $\mathfrak{d}(\alpha)$ its degree.
% the direction of its tangent vector, we define $\eta'(t)=e^{i\theta(t)}$,  we have $\dfrac{1}{2\pi}\int_{0}^{T} \theta'(t)dt \in \mathbb{R}$. 
Moreover, the degree of two such paths that differ by a simple loop around a singularity differ by the degree of this singularity. Therefore, the congruence class modulo $k$ of $\mathfrak{d}(\alpha)$ does not depends on $\alpha$. Hence, its class $\mathfrak{d}_{\omega}$ in $\mathbb{Z}_{k}$ is a topological invariant of the meromorphic $1$-form $\omega$ (and the associated translation structure) that we call the \textit{topological class} of~$\omega$.\newline

For such strata, the monodromy group $\mathcal{M}_{\mathcal{F}}$ of the isoresidual cover is not the full symmetric group $\mathfrak{S}(\mathcal{F})$.
\begin{prop}\label{prop:morphism}
For any stratum $\mathcal{H}(a,-kc_{1},\dots,-kc_{p-2},-1,-1)$ of genus zero such that $k \geq 2$, $a=k \sum_{j=1}^{p-2} c_{j}$ and $c_{1},\dots,c_{p-2}$ coprime, there is a surjective morphism from the monodromy group $\mathcal{M}_{\mathcal{F}}$  to $\mathbb{Z}_{k}$.
\end{prop}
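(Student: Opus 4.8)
The plan is to construct the surjective morphism $\mathcal{M}_{\mathcal{F}} \to \mathbb{Z}_k$ by showing that the topological class $\mathfrak{d}_\omega \in \mathbb{Z}_k$ labels the elements of the generic fiber $\mathcal{F}$ in a way that is \emph{equivariant} under the monodromy action, up to a global shift. More precisely, I would first establish that partitioning $\mathcal{F}$ according to the value of $\mathfrak{d}_\omega$ yields a well-defined, nonempty decomposition $\mathcal{F}=\bigsqcup_{j\in\mathbb{Z}_k}\mathcal{F}_j$, and that each generator $\gamma_I$ of $W_p$ acts on this decomposition as translation by a fixed element $\delta(\gamma_I)\in\mathbb{Z}_k$ depending only on the hyperplane $A_I$ (and not on the chosen element of the fiber). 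Granting this, the assignment $\gamma_I\mapsto\delta(\gamma_I)$ extends to a homomorphism $\delta\colon W_p\to\mathbb{Z}_k$ which factors through $\mathcal{M}_{\mathcal{F}}$ (since any element of $W_p$ acting trivially on $\mathcal{F}$ acts trivially on the decomposition), giving the desired morphism $\mathcal{M}_{\mathcal{F}}\to\mathbb{Z}_k$; surjectivity follows once we exhibit a single $\gamma_I$ with $\delta(\gamma_I)$ generating $\mathbb{Z}_k$.

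The computation of $\delta(\gamma_I)$ is where the combinatorial machinery of the paper enters. By Proposition~\ref{prop:realgen} and Lemma~\ref{lem:arbresdiff}, I would translate everything into the language of decorated trees in $\mathcal{T}(b_1,\dots,b_p,\psi_{\mathcal{F}})$, where the two simple poles are the vertices labelled $p-1$ and $p$. The topological class $\mathfrak{d}_\omega$ should be readable off the decorated tree: a vertical path from the first simple pole to the second, read through the core, accumulates a winding contribution from each conical angle it wraps, so $\mathfrak{d}_\omega$ equals (modulo $k$) a sum of the resonance degrees $d_{I'}$ of the subtrees hanging off the path joining the two leaves $p-1$ and $p$ — and since every $b_j$ for $j\le p-2$ is a multiple of $k$, each such $d_{I'}$ is congruent to $-1$ (or a controlled value) mod $k$, making the class depend only on combinatorial position data mod $k$. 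Then, using the explicit description of the monodromy action on decorated trees from Propositions~\ref{prop:monodromieloop} and~\ref{prop:monodromieloopbis} (the edge $e$ cutting the tree along $I\sqcup I^\complement$ slides to the next admissible corner), one checks directly that this slide changes the position of the path between the two simple leaves by a fixed amount, hence changes $\mathfrak{d}_\omega$ by a constant $\delta(\gamma_I)\in\mathbb{Z}_k$ independent of the starting tree. For surjectivity, take $\gamma_I$ with $I=\{1\}$ (so $d_I = a - 1 = k\sum c_j - 1$, a case covered by Proposition~\ref{prop:monodromieloop} when $p\ge 3$): a single application moves the relevant leaf by one corner, changing $\mathfrak{d}_\omega$ by $\pm 1 \bmod k$, which generates $\mathbb{Z}_k$.

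The main obstacle I expect is proving that $\mathfrak{d}_\omega$ is genuinely constant on each $\mathcal{F}_j$ \emph{and} that $\delta(\gamma_I)$ is independent of the chosen element of the fiber — i.e.\ that the monodromy shift is uniform. Concretely one must verify two things carefully: (a) that the Gauss-map degree of the distinguished vertical path is insensitive to the continuous deformation of the flat structure along a loop in $\mathcal{R}_p\setminus\mathcal{A}_p$ except for the discrete jump coming from the combinatorial surgery on the tree — this is a genus-zero transversality argument saying no \emph{other} singularity is crossed vertically along a generic loop, which should follow from the genericity of the contracted direction as in Proposition~\ref{prop:realgen}; and (b) that the arithmetic "each $d_{I'}\equiv$ const $\pmod k$" bookkeeping along the path really does collapse the dependence on which subtrees hang where, so that the shift is the same regardless of how the heavy poles are distributed relative to the two simple poles. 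Once these uniformity statements are in hand, the homomorphism property of $\delta$ is automatic (translations of $\mathbb{Z}_k$ commute and compose additively), and the factorization through $\mathcal{M}_{\mathcal{F}}$ and the surjectivity are immediate.
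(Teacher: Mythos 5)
Your overall strategy is the same as the paper's: use the topological class $\mathfrak{d}_{\omega}$, show that each generator $\gamma_{I}$ shifts it by a constant in $\mathbb{Z}_{k}$, deduce a homomorphism factoring through $\mathcal{M}_{\mathcal{F}}$, and get surjectivity from one generator whose shift generates $\mathbb{Z}_{k}$. The genuine problem is in the last step, which is precisely where a specific computation is required: you claim that $\gamma_{I}$ with $I=\{1\}$ (the loop around the vanishing of the residue of the \emph{heavy} pole of order $kc_{1}$) shifts $\mathfrak{d}_{\omega}$ by $\pm 1$. It does not. For the partition $\{1\}\sqcup\{2,\dots,p\}$ the two simple poles lie in the same part, the monodromy only moves the leaf labelled $1$ to another corner while preserving the shape of the subtree containing both simple poles, and a path between the two simple poles can be kept away from the modified region; hence $\mathfrak{d}_{\omega}$ is preserved and $\delta(\gamma_{\{1\}})=0$. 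The extreme case makes this unmistakable: for $\mathcal{H}(a,-a,-1,-1)$ (the $p=3$ member of the family, where your formula $d_{I}=a-1$ is meant to apply), $\gamma_{\{1\}}$ acts \emph{trivially} on the generic fiber by Corollary~\ref{cor:trivmon}, so it cannot generate anything. The generators that actually realize the shift $\pm 1$ are $\gamma_{p-1}$ and $\gamma_{p}$, the loops around the hyperplanes $\lambda_{p-1}=0$ and $\lambda_{p}=0$ attached to the \emph{simple} poles: their geometric effect is to rotate the half-infinite cylinder of a simple pole, i.e.\ an endpoint of the defining path, by $2\pi$, which changes the Gauss-map degree by $\pm1$. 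This is exactly how the paper concludes, sending every other $\gamma_{I}$ to $0$; so your proof is repairable by switching generators, but as written the surjectivity argument fails.

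A secondary point: your use of Proposition~\ref{prop:monodromieloop} here also mixes up the roles of $I$ and $I^{\complement}$. That proposition applies when $I^{\complement}$ is the singleton, so for the loop $\gamma_{1}$ the relevant resonance degree is $d_{\{2,\dots,p\}}=a+1-kc_{1}$, not $kc_{1}-1$, and your identity $d_{\{1\}}=a-1=k\sum c_{j}-1$ only holds when $p=3$. The remaining ingredients of your plan (well-definedness of $\mathfrak{d}_{\omega}$ modulo $k$ because the zero and the non-simple poles have orders divisible by $k$, uniformity of the shift over the fiber, the case distinction according to whether the partition separates the two simple poles, and the factorization through $\mathcal{M}_{\mathcal{F}}$) coincide in substance with the paper's argument, although you leave the key uniformity claims at the level of ``should be checkable''; the paper disposes of them by the same tree-surgery descriptions of Propositions~\ref{prop:monodromieloop} and~\ref{prop:monodromieloopbis} that you invoke.
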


\begin{proof}
It suffices to check that for any element $\omega$ of a generic isoresidual fiber~$\mathcal{F}$, the monodromy action of any loop around a hyperplane $A_{I}$ induces an action on the topological class $\mathfrak{d}_{\omega}$ of $\omega$ that is compatible with the group structure of $\mathbb{Z}_{k}$. Any resonance hyperplane corresponds to a nontrivial partition $I \cap I^{\complement}$ of the set of the poles. It is clear from Propositions~\ref{prop:monodromieloop} and~\ref{prop:monodromieloopbis} that the monodromy action of a loop around a hyperplane corresponding to a partition where the two simple poles are in the same subset preserves~$\mathfrak{d}_{\omega}$.\newline
On the opposite, the monodromy action of the loops $\gamma_{p-1}$ and $\gamma_{p}$ increases or decreases the topological class by one, i.e. $\mathfrak{d}_{\gamma_{p-1}(\omega)}=\mathfrak{d}_{\omega} \pm 1$. Indeed, this operation consists of rotating a simple pole by an angle of~$2\pi$.\newline
The last case is about resonance hyperplanes corresponding to partitions where none of the subset is a singleton and the two simple poles are not in the same subset. The geometric interpretation of the monodromy action of such loops (see the proof of Proposition~\ref{prop:monodromieloopbis})  is given by rotating the edge connecting around the two subtrees in the positive direction.  Therefore, the topological class~$\mathfrak{d}_{\omega}$ is also preserved in this case.\newline
Summing up, we conclude that the group morphism sending $\gamma_{I}$ to $0$ for every $I\neq \{p-1\},\{p\}$ and $\pm1$ to these two monodromies.
\end{proof}

Now recall that the generic fiber $\mathcal{F}$ of such stratum has $\frac{a!}{(a+2-p)!}$. From the previous property, we deduce the following useful result. 
\begin{cor}\label{cor:semidirect}
The monodromy group $\mathcal{M}_{\mathcal{F}}$ is the semidirect product of  $\mathbb{Z}_{k}$ and a subgroup of the symmetric group of a set of $\frac{a!}{k(a+2-p)!}$ elements.
\end{cor}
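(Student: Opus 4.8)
The plan is to promote the surjection $\phi\colon\mathcal{M}_{\mathcal{F}}\twoheadrightarrow\mathbb{Z}_{k}$ furnished by Proposition~\ref{prop:morphism} to a semidirect product decomposition, by analysing the partition of the generic fiber $\mathcal{F}$ into the level sets of the topological class. \textbf{Step 1 (the fiber splits into $k$ equal blocks).} For $i\in\mathbb{Z}_{k}$ set $\mathcal{F}_{i}=\{\omega\in\mathcal{F}:\mathfrak{d}_{\omega}=i\}$. The case analysis in the proof of Proposition~\ref{prop:morphism} shows that the monodromy of any loop $g$ shifts the topological class by $\phi(g)$, that is, it maps $\mathcal{F}_{i}$ onto $\mathcal{F}_{i+\phi(g)}$; in particular $\gamma_{p-1}$ restricts to a bijection between consecutive blocks, so the $\mathcal{F}_{i}$ all have the same cardinality. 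Since by Theorem~\ref{thm:numerofibre} the fiber $\mathcal{F}$ has $\frac{a!}{(a+2-p)!}$ elements, each $\mathcal{F}_{i}$ has $m:=\frac{a!}{k(a+2-p)!}$ elements, and $\{\mathcal{F}_{i}\}_{i\in\mathbb{Z}_{k}}$ is a block system on which $\mathcal{M}_{\mathcal{F}}$ acts, through $\phi$, as the regular cyclic group $\mathbb{Z}_{k}$.

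\textbf{Step 2 (the kernel as a permutation group on one block).} The kernel $N:=\ker\phi$ is precisely the setwise stabiliser of the block $\mathcal{F}_{0}$, so restriction yields a homomorphism $\rho\colon N\to\mathfrak{S}(\mathcal{F}_{0})$, where $\mathfrak{S}(\mathcal{F}_{0})$ is the symmetric group of a set of $\frac{a!}{k(a+2-p)!}$ elements. I would then prove that $\rho$ is injective. Since the strata of genus zero are connected, $\mathcal{M}_{\mathcal{F}}$, hence $N$, is transitive on $\mathcal{F}_{0}$; combining this with the fact that conjugation by $\gamma_{p-1}$ intertwines the $N$-action on $\mathcal{F}_{i}$ with the one on $\mathcal{F}_{i+1}$, and with the description of the monodromy generators as surgeries of decorated trees in Propositions~\ref{prop:monodromieloop} and~\ref{prop:monodromieloopbis}, one should be able to show that an element of $N$ fixing $\mathcal{F}_{0}$ pointwise in fact fixes every $\mathcal{F}_{i}$ pointwise, hence is the identity of $\mathcal{M}_{\mathcal{F}}$. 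Granting this, $N\cong\rho(N)$ is a subgroup of $\mathfrak{S}(\mathcal{F}_{0})$.

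\textbf{Step 3 (splitting the extension).} It remains to split the exact sequence $1\to N\to\mathcal{M}_{\mathcal{F}}\xrightarrow{\phi}\mathbb{Z}_{k}\to 1$, i.e.\ to produce a cyclic subgroup of order $k$ of $\mathcal{M}_{\mathcal{F}}$ mapping isomorphically onto $\mathbb{Z}_{k}$. The natural candidate is built from $\gamma_{p-1}$: by Proposition~\ref{prop:monodromieloop}, applied to the partition $\{p-1\}\sqcup\{p-1\}^{\complement}$ whose non-singleton side has degree $a$, the permutation $\gamma_{p-1}$ is a product of disjoint $a$-cycles, hence has order $a$, while $\phi(\gamma_{p-1})=\pm1$ generates $\mathbb{Z}_{k}$. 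When $\gcd(a/k,k)=1$ a suitable power of $\gamma_{p-1}$ already has order $k$ and is a section. In general one must correct $\gamma_{p-1}$ by some $n\in N$ so that $n\gamma_{p-1}$ closes up after $k$ iterations; this is the heart of the matter, since a general extension with quotient $\mathbb{Z}_{k}$ need not split. Here the geometric meaning of the monodromy (rotating a simple pole by $2\pi$) together with the explicit cycle structures of Propositions~\ref{prop:monodromieloop} and~\ref{prop:monodromieloopbis} must be used to certify that the required correcting element actually lies inside $\mathcal{M}_{\mathcal{F}}$. Once a lift $\mathbb{Z}_{k}\hookrightarrow\mathcal{M}_{\mathcal{F}}$ is available, it together with Step~2 gives $\mathcal{M}_{\mathcal{F}}=\rho(N)\rtimes\mathbb{Z}_{k}$ with $\rho(N)$ a subgroup of the symmetric group of a set of $\frac{a!}{k(a+2-p)!}$ elements, which is the assertion. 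The two non-formal points, and the ones I expect to cost real work, are the injectivity of $\rho$ in Step~2 and the existence of the correcting element in Step~3: both rely on the fine combinatorial description of the monodromy action around each resonance hyperplane rather than on soft group theory.
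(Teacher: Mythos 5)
Your Step 1 is, in substance, the paper's entire proof: the authors observe that the monodromy group permutes the topological classes transitively (via the surjection of Proposition~\ref{prop:morphism}), so the $\frac{a!}{(a+2-p)!}$ elements of a generic fiber split into $k$ classes of $\frac{a!}{k(a+2-p)!}$ elements each, and they read the corollary off this block structure without further argument. The two points you single out as the heart of the matter --- injectivity of the restriction $\rho\colon N\to\mathfrak{S}(\mathcal{F}_{0})$ and the splitting of $1\to N\to\mathcal{M}_{\mathcal{F}}\to\mathbb{Z}_{k}\to1$ --- are not addressed in the paper at all; what is actually used downstream is only the imprimitive block structure together with the surjection onto $\mathbb{Z}_{k}$ (Corollary~\ref{cor:monoex} is the case where the blocks are singletons, so the kernel acts trivially and the group is exactly $\mathbb{Z}_{a}$).

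As a proof of the literal statement, however, your proposal has genuine gaps which you flag but do not close. In Step 2 the intertwining argument does not give injectivity of $\rho$: if $n\in N$ fixes $\mathcal{F}_{0}$ pointwise, conjugation by $\gamma_{p-1}$ only tells you that $\gamma_{p-1}n\gamma_{p-1}^{-1}$ fixes $\mathcal{F}_{1}$ pointwise, not that $n$ itself does; a priori $N$ embeds only into the product $\prod_{i}\mathfrak{S}(\mathcal{F}_{i})$, and the full wreath product $(\mathfrak{S}_{m})^{k}\rtimes\mathbb{Z}_{k}$ shows that no soft argument can force $\rho$ to be injective --- one would need a new combinatorial input about the tree surgeries, which is not supplied. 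In Step 3 your section exists only when $\gcd(a/k,k)=1$ (a power of $\gamma_{p-1}$, which by Proposition~\ref{prop:monodromieloop} is a product of disjoint $a$-cycles); in the remaining cases the correcting element is not constructed, and an extension with quotient $\mathbb{Z}_{k}$ need not split in general. So your write-up is stricter than the published one about what a literal reading of ``semidirect product'' would require, but it does not actually supply those missing arguments; if you only prove what the paper proves and uses --- the partition into $k$ equal topological classes permuted through $\mathbb{Z}_{k}$ --- then your Step 1 alone is the intended proof and coincides with it.
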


\begin{proof}
Since the monodromy group acts transitively on the topological classes of the elements of the fiber, the $\frac{a!}{(a+2-p)!}$ elements of the fiber are split into classes of $\frac{a!}{k(a+2-p)!}$ elements with the same topological class.
\end{proof}

Corollary \ref{cor:semidirect} also allows a direct computation of the monodromy group of a first family of examples.

\begin{cor}\label{cor:monoex}
For any $a \geq 1$, the monodromy group of the isoresidual fibration of $\mathcal{H}(a,-a,-1,-1)$ is the cyclic group $\mathbb{Z}_{a}$ of order~$a$.
\end{cor}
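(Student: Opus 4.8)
The plan is to recognize $\mathcal{H}(a,-a,-1,-1)$ as a member of the exceptional family treated in Section~\ref{sec:monod2sim} and then simply invoke Corollary~\ref{cor:semidirect}. First I would dispose of the case $a=1$ directly: here $p=3$ and the degree of the isoresidual cover given by Theorem~\ref{thm:numerofibre} is $\frac{a!}{(a+2-p)!}=\frac{1!}{0!}=1$, so every generic fiber is a single point, the monodromy group is trivial, and $\mathbb{Z}_{1}$ is trivial as well, which matches the claim.

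For $a\geq 2$, the key observation is that $\mathcal{H}(a,-a,-1,-1)$ is exactly $\mathcal{H}(a,-kc_{1},-1,-1)$ with $p=3$, $k=a\geq 2$, and $c_{1}=1$ (so that $c_{1},\dots,c_{p-2}$ is the one-term coprime tuple and $a=k\sum_{j=1}^{p-2}c_{j}=k$). Thus the hypotheses of Proposition~\ref{prop:morphism} and Corollary~\ref{cor:semidirect} are met. A generic fiber $\mathcal{F}$ has $\frac{a!}{(a+2-p)!}=\frac{a!}{(a-1)!}=a$ elements, and Corollary~\ref{cor:semidirect} describes the monodromy group $\mathcal{M}_{\mathcal{F}}$ as a semidirect product of $\mathbb{Z}_{k}=\mathbb{Z}_{a}$ with a subgroup of the symmetric group on $\frac{a!}{k(a+2-p)!}=\frac{a!}{a\,(a-1)!}=1$ element. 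The latter symmetric group is trivial, so $\mathcal{M}_{\mathcal{F}}\cong\mathbb{Z}_{a}$, which is the claim.

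Equivalently, one can argue without the semidirect-product formalism: by Proposition~\ref{prop:morphism} there is a surjection $\rho\colon\mathcal{M}_{\mathcal{F}}\to\mathbb{Z}_{a}$ given by the action on topological classes, and since the $a$ elements of $\mathcal{F}$ realize all $a$ distinct topological classes (one each), the topological class is a complete invariant on $\mathcal{F}$; hence any element of $\mathcal{M}_{\mathcal{F}}$ in the kernel of $\rho$ fixes every point of $\mathcal{F}$ and is therefore trivial, so $\rho$ is an isomorphism. The only mild point of care is the degenerate case $a=1$, where the exceptional-family construction with $k\geq 2$ does not apply and which is handled above; beyond that there is no real obstacle, the statement being a direct specialization of Corollary~\ref{cor:semidirect}. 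As a consistency check, this recovers case (i) of Theorem~\ref{thm:monothree} for the subfamily $b_{2}=b_{3}=1$, $b_{1}=a$.
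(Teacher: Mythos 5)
Your proof is correct and follows essentially the same route as the paper, which obtains this statement as a direct specialization of Corollary~\ref{cor:semidirect} (equivalently Proposition~\ref{prop:morphism}) to $k=a$, $c_{1}=1$, $p=3$, where the fiber has $a$ elements and the symmetric-group factor acts on a single element. Your separate treatment of the degenerate case $a=1$, which falls outside the hypothesis $k\geq 2$ of Section~\ref{sec:monod2sim}, is a welcome extra precision but does not change the argument.
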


Note that in this example, the monodromy group of the isoresidual fibration is the smallest subgroup of the automorphism group of the generic fiber which is still transitive.\newline

\subsection{The case of strata of meromorphic $1$-forms with three poles}
\label{sec:monod3poles}

We consider a family of examples where the degrees of the singularities are slightly different from that of Corollary~\ref{cor:monoex} but where the monodromy group is far bigger. We recall first useful results about permutations, which are proved in~\cite{Pi} as Proposition~3 and Proposition~4.

\begin{lem}\label{lem:piccard}
Let $n\geq1$, the groups $\mathfrak{S}_{n}$ and $\mathfrak{A}_{n}$ be respectively the symmetric and alternating group of $n$ elements. Then the following two claims hold.
\begin{enumerate}[(i)]
 \item For any integer $1<k<n$ the permutations $(1~2~\dots~k)$ and $(1~2~\dots~n)$ generate $\mathfrak{A}_{n}$ if both $k$ and $n$ are odd and  $\mathfrak{S}_{n}$ if at least one of them is even.
 \item For any pair of integers $1<l\leq k<n$, the permutations $(1~2~\dots~k)$ and $(l~l+1~\dots~n)$ generate $\mathfrak{A}_{n}$ if both permutations are even and  $\mathfrak{S}_{n}$ if at least one permutation is odd, with the following three exceptions:
 $$(1~2~3~4),\, (3~4~5~6);\, (1~2~3~4),\,(2~3~4~5~6);\, (1~2~3~4~5),\, (3~4~5~6).$$
\end{enumerate}

\end{lem}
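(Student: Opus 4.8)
I would prove this by the classical argument for generating symmetric and alternating groups by two cycles. Write $G\le\mathfrak{S}_{n}$ for the group generated by the two given cycles. Since the generators are cycles, of respective signs $(-1)^{k-1},(-1)^{n-1}$ in (i) and $(-1)^{k-1},(-1)^{n-l}$ in (ii), we have $G\subseteq\mathfrak{A}_{n}$ exactly when both generators are even; so in every case it suffices to prove that $G$ contains $\mathfrak{A}_{n}$ — outside the three exceptional pairs of (ii) — since then $G$ is $\mathfrak{A}_{n}$ or $\mathfrak{S}_{n}$ according to the parity of the generators. Transitivity of $G$ is immediate: the two supports $\{1,\dots,k\}$ and $\{l,\dots,n\}$ (with $l=1$ in (i)) cover $\{1,\dots,n\}$ and have nonempty intersection.

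For (i), set $\sigma=(1~2~\dots~k)$ and $\tau=(1~2~\dots~n)$. A direct computation gives $\tau\sigma^{-1}=(1~k+1~k+2~\dots~n)$, so $G=\langle\sigma,\tau\sigma^{-1}\rangle$; writing $\rho=\tau\sigma^{-1}$, a second short computation shows $\rho^{-1}(\sigma\rho\sigma^{-1})=(1~n~2)$, a $3$-cycle. Conjugating $(1~n~2)$ by the powers of $\tau$ produces the consecutive $3$-cycles $(j-1~j~j+1)$ for all $j\in\mathbb{Z}/n$, and these are well known to generate $\mathfrak{A}_{n}$; hence $G\supseteq\mathfrak{A}_{n}$, which together with the parity count proves (i).

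For (ii), first treat the case $l=k$, where the two supports meet in the single point $k$: then $(1~2~\dots~k)(k~k+1~\dots~n)=(1~2~\dots~n)$, so $G=\langle(1~\dots~k),(1~\dots~n)\rangle$ and (i) applies; checking parities shows no exceptional pair can occur here. Now assume $l<k$. The first step is to prove that $G$ is primitive. If $\mathcal{B}$ is a nontrivial block system with blocks of size $d$, let $B_{0}$ be the block containing $n$; since $n>k$ it is fixed setwise by $(1~\dots~k)$, hence $\langle(1~\dots~k)\rangle$-invariant, so $B_{0}\supseteq\{1,\dots,k\}$ or $B_{0}\cap\{1,\dots,k\}=\emptyset$. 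In the first case $1\in B_{0}$, and $1$ is fixed by $(l~\dots~n)$ because $l\ge 2$, so $B_{0}$ is also $\langle(l~\dots~n)\rangle$-invariant and therefore contains $\{l,\dots,n\}$; then $B_{0}=\{1,\dots,n\}$, which is absurd. In the second case the same invariance arguments applied to the block through $1$ show that it lies inside $\{1,\dots,l-1\}$; one then checks that $\langle(1~\dots~k)\rangle$ acts transitively on the blocks meeting $\{1,\dots,k\}$ and $\langle(l~\dots~n)\rangle$ on the blocks meeting $\{l,\dots,n\}$, so that every block meeting the overlap $\{l,\dots,k\}$ is contained in it; but $(1~\dots~k)$ sends $k\in\{l,\dots,k\}$ outside the overlap while fixing $\{k+1,\dots,n\}$, so it maps the block through $k$ onto a set meeting both $\{l,\dots,k\}$ and its complement, contradicting the block property unless $d=1$. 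Hence $G$ is primitive. The second step is to exhibit a $3$-cycle or a transposition in $G$: this is immediate when one of the two cycles already has length at most $3$, and in general it is obtained from a short product of conjugates of the generators in the spirit of the computation in (i); this construction succeeds for every triple $(n,k,l)$ with $l<k$ with the sole exceptions $(1~2~3~4),(3~4~5~6)$; $(1~2~3~4),(2~3~4~5~6)$; $(1~2~3~4~5),(3~4~5~6)$ of degree $6$. For all remaining cases, primitivity together with Jordan's classical theorem — a primitive permutation group of degree $n$ containing a $3$-cycle contains $\mathfrak{A}_{n}$, and one containing a transposition equals $\mathfrak{S}_{n}$ — yields $G\supseteq\mathfrak{A}_{n}$, and the parity count concludes. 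For the three exceptional pairs one checks by hand that $G$ is the exotic copy of $\mathfrak{S}_{5}\cong\mathrm{PGL}_{2}(\mathbb{F}_{5})$ inside $\mathfrak{S}_{6}$ (acting on $\mathbb{P}^{1}(\mathbb{F}_{5})$); this group is primitive but contains neither a genuine $3$-cycle nor a transposition, so it is neither $\mathfrak{A}_{6}$ nor $\mathfrak{S}_{6}$, and these are the only exceptions.

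The transitivity, the parity bookkeeping, and the explicit cycle identities above are routine. The two genuinely delicate points — where I expect the main difficulty to lie — are the primitivity of $G$ in case (ii) with $l<k$, where the analysis of how the two generators permute a hypothetical block system is fiddly precisely because neither generator is an $n$-cycle, and the verification that the short-element construction fails for exactly the three listed degree-$6$ pairs and for no others; in the latter it is the exceptional isomorphism $\mathfrak{S}_{5}\cong\mathrm{PGL}_{2}(\mathbb{F}_{5})$ that creates the anomaly, the very group that appears as the exotic monodromy group in Theorem~\ref{thm:monothree}(ii).
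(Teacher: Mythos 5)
First, note that the paper does not prove this lemma at all: it is quoted verbatim from Piccard's 1938 paper (cited as~\cite{Pi}, Propositions~3 and~4), so there is no internal argument to compare yours with; your proposal has to stand on its own. Part (i) of your proposal does stand: the identities $\tau\sigma^{-1}=(1~k+1~\dots~n)$ and $\rho^{-1}(\sigma\rho\sigma^{-1})=(1~n~2)$ are correct, conjugating by powers of $\tau$ gives all cyclically consecutive $3$-cycles, and the parity bookkeeping is right, so $\langle\sigma,\tau\rangle\supseteq\mathfrak{A}_{n}$ and (i) follows. The reduction of the case $l=k$ of (ii) to (i) via $(1~\dots~k)(k~\dots~n)=(1~\dots~n)$ is also correct. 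Your primitivity argument for $l<k$ is essentially sound, though the last sentence is garbled: the image of the block through $k$ under $(1~\dots~k)$ is the block through $1$, which you have already placed inside $\{1,\dots,l-1\}$; the contradiction is that, if $d\geq2$, this image must also contain $x+1\in\{l+1,\dots,k\}$ for some $x\in\{l,\dots,k-1\}$ in the block through $k$, which is incompatible with the fact (which you did establish) that any block meeting $\{l,\dots,k\}$ lies inside it.

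The genuine gap is the decisive step of (ii) for $l<k$: you never actually produce the $3$-cycle or transposition. You assert that "a short product of conjugates of the generators in the spirit of the computation in (i)" yields one, and that this "construction succeeds for every triple $(n,k,l)$ with $l<k$ with the sole exceptions" of the three degree-$6$ pairs — but no construction is given, no verification is made that it works outside those pairs, and no argument is made that it fails only for them. That is precisely the content of Piccard's Proposition~4 and the only nontrivial part of the statement (the exact exception list is what forces the separate treatment of $\mathcal{H}(6,-2,-3,-3)$ in Proposition~\ref{prop:last}); primitivity plus Jordan's theorem is the easy half of the strategy, and without the missing construction (or an alternative, e.g.\ a case analysis on $\gcd$s and commutators, or an appeal to the classification of primitive groups containing large cycles) part (ii) is not proved. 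The hand-check that the three exceptional pairs generate the exotic $\mathfrak{S}_{5}\cong\mathrm{PGL}_{2}(\mathbb{F}_{5})$ inside $\mathfrak{S}_{6}$ is fine as a finite verification, but it only shows these three are exceptions, not that they are the only ones.
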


We now state a first generalisation of Corollary~\ref{cor:monoex}.
\begin{prop}\label{prop:monosimple}
For any $s,t \geq 2$, the monodromy group of the isoresidual fibration of the stratum $\mathcal{H}(s+t-1,-s,-t,-1)$ is:
\begin{enumerate}[(i)]
 \item  the alternating group $\mathfrak{A}(\mathcal{F})$ if both $s$ and $t$ are odd;
 \item the full symmetric group $\mathfrak{S}(\mathcal{F})$ otherwise.
\end{enumerate}
\end{prop}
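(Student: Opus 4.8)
The plan is to realise $\mathcal{M}_{\mathcal{H}}$ as a subgroup of the symmetric group $\mathfrak{S}(\mathcal{F})$ of the generic fibre by computing the cycle types of an explicit generating set and then invoking Lemma~\ref{lem:piccard}. Since $b_{1}+b_{2}+b_{3}=s+t+1=a+2$, Theorem~\ref{thm:numerofibre} gives $|\mathcal{F}|=\frac{a!}{(a-1)!}=a=s+t-1$. As recalled at the beginning of Section~\ref{sec:monodromie}, the group $W_{3}\cong PB_{3}$ is generated by the three loops $\gamma_{1},\gamma_{2},\gamma_{3}$ around the hyperplanes $A_{\{1\}}=A_{\{2,3\}}$, $A_{\{2\}}=A_{\{1,3\}}$ and $A_{\{3\}}=A_{\{1,2\}}$, so $\mathcal{M}_{\mathcal{H}}=\langle\gamma_{1},\gamma_{2},\gamma_{3}\rangle$. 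Applying Proposition~\ref{prop:monodromieloop} (with the singleton side being $\{1\}$, $\{2\}$, $\{3\}$ in turn, so that the relevant resonance degrees are $d_{\{2,3\}}=t$, $d_{\{1,3\}}=s$ and $d_{\{1,2\}}=a$) I obtain that $\gamma_{1}$ is a single $t$-cycle fixing $s-1$ elements, $\gamma_{2}$ a single $s$-cycle fixing $t-1$ elements, and $\gamma_{3}$ a single $a$-cycle with no fixed point.

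The heart of the argument is to show that, in the cyclic order on $\mathcal{F}$ recorded by the $a$-cycle $\gamma_{3}$, the support of $\gamma_{2}$ is a block of $s$ consecutive elements on which $\gamma_{2}$ acts by a shift in the same direction as $\gamma_{3}$; equivalently, after relabelling $\mathcal{F}$ by $\{1,\dots,a\}$ one can take $\gamma_{3}=(1\,2\,\cdots\,a)$ and $\gamma_{2}=(1\,2\,\cdots\,s)$. I would establish this using the decorated-tree model (Lemma~\ref{lem:arbresdiff}) and the surgery description of these monodromies from the proof of Proposition~\ref{prop:monodromieloop}: $\gamma_{3}$ slides the simple pole through the $a$ admissible corners of the subtree carried by the poles of orders $s$ and $t$, while $\gamma_{2}$ slides the domain of the pole of order $s$ — equivalently, moves the connecting edge — through its admissible corners, and the two families of trees overlap in a run of consecutive configurations. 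This is the step I expect to be the main obstacle, since it is exactly where one has to track the embedded (cyclic) structure of the decorated trees and the orientation conventions carefully; one useful intermediate remark is that no tree of $\mathcal{F}$ is fixed by both $\gamma_{1}$ and $\gamma_{2}$, as that would force both vertices $1$ and $2$ to be interior in a tree with only three vertices.

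Granting this, since $s,t\ge 2$ we have $1<s<a$, so Lemma~\ref{lem:piccard}(i) applied to $(1\,2\,\cdots\,s)$ and $(1\,2\,\cdots\,a)$ shows that $\langle\gamma_{2},\gamma_{3}\rangle$ equals $\mathfrak{A}_{a}$ when $s$ and $a$ are both odd and $\mathfrak{S}_{a}$ otherwise. Because $a=s+t-1$, the integer $a$ is odd exactly when $s$ and $t$ have the same parity, hence $s$ and $a$ are both odd precisely when $s$ and $t$ are both odd. In that case every generator $\gamma_{1},\gamma_{2},\gamma_{3}$ is an even permutation (a cycle of odd length $t$, $s$ and $a$ respectively), so $\mathcal{M}_{\mathcal{H}}\subseteq\mathfrak{A}(\mathcal{F})$ and therefore $\mathcal{M}_{\mathcal{H}}=\mathfrak{A}(\mathcal{F})$; otherwise $\mathfrak{S}(\mathcal{F})=\langle\gamma_{2},\gamma_{3}\rangle\subseteq\mathcal{M}_{\mathcal{H}}\subseteq\mathfrak{S}(\mathcal{F})$, so $\mathcal{M}_{\mathcal{H}}=\mathfrak{S}(\mathcal{F})$. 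The parity dichotomy obtained here is consistent with, and can also be read off from, Corollary~\ref{cor:alternate}.

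An alternative for the first two steps would be to combine the full cycle $\gamma_{3}$ with the single $3$-cycle $[\gamma_{1},\gamma_{2}]$ furnished by Lemma~\ref{lem:threecycle} (with the simple pole playing the role of the singleton $K$, and $s,t\ge 2$ being condition (i) of that lemma), and then argue that a transitive group containing an $a$-cycle and a $3$-cycle is $\mathfrak{A}_{a}$ or $\mathfrak{S}_{a}$; however, deciding which of the two occurs is less transparent this way, so I would keep the argument organised around Lemma~\ref{lem:piccard}.
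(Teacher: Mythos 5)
Your proposal is essentially the paper's proof: the same cycle-type computation via Proposition~\ref{prop:monodromieloop}, the same reduction to Lemma~\ref{lem:piccard}(i) applied to a short cycle together with the full $a$-cycle $\gamma_{3}$, and the same parity dichotomy (all meridians are cycles of odd length when $s,t$ are both odd, which is Corollary~\ref{cor:alternate}(i)); the only cosmetic difference is that you pair $\gamma_{3}$ with the $s$-cycle $\gamma_{2}$ while the paper pairs it with the $t$-cycle $\gamma_{1}$, which is immaterial by symmetry. The one step you leave as a plan --- that the pair is simultaneously conjugate to $\bigl((1\,2\,\dots\,s),(1\,2\,\dots\,a)\bigr)$ --- is exactly what the paper verifies, and it takes only a few lines once you fix a real chamber adjacent to \emph{both} hyperplanes so that the two local descriptions of Proposition~\ref{prop:monodromieloop} apply to the same fiber: the paper takes $\lambda_{1},\lambda_{3}<0$ for the pair $(\gamma_{1},\gamma_{3})$, and for your pair you would take $\lambda_{2},\lambda_{3}<0$, so the trees are rooted at the vertex of weight $s$. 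In that chamber the fiber consists of $s$ star-shaped trees (both other vertices attached to the root) and $t-1$ chain trees (the simple pole attached to the vertex of weight $t$); $\gamma_{2}$ cyclically permutes the $s$ star trees and fixes the chains, while $\gamma_{3}$ slides the simple pole once around the two-vertex subtree, visiting the star trees and then the chain trees consecutively, which is precisely the consecutiveness you need. Your concern about matching the direction of the two shifts is unnecessary: replacing $\gamma_{2}$ by $\gamma_{2}^{-1}$ does not change the subgroup generated, so only the fact that the support of $\gamma_{2}$ is a block of consecutive elements of the $\gamma_{3}$-cycle, traversed monotonically, matters. With that verification written out (and the chamber made explicit), your argument is complete and coincides with the paper's.
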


\begin{proof}
According to point (i) of Corollary \ref{cor:alternate}, the monodromy group of the isoresidual fibration of such a stratum is a subgroup of the alternating group $\mathfrak{A}(\mathcal{F})$ if and only if both~$s$ and~$t$ are odd. Therefore, it suffices to consider the case where $s$ and $t$ are odd and in that case to find a set of elements of the monodromy group that generates the whole alternating group~$\mathfrak{A}(\mathcal{F})$.\newline

According to Theorem~\ref{thm:numerofibre}, the generic fibers have $s+t-1$ elements. The action by monodromy of the loops around each resonance hyperplane can be decomposed into cycles. It follows from Proposition~\ref{prop:monodromieloop} that the simple loop $\gamma_{3}$ around the hyperplane $A_{3}$ acts as a cycle of order $s+t-1$.  In contrast, the simple loop $\gamma_{1}$ around $A_{1}$ acts as a cycle of order~$t$ and the cycle $\gamma_{2}$ around $A_{1}$ acts as a cycle of order~$s$.\newline
We consider a fiber $\mathcal{F}$ above the chamber where $\lambda_{1},\lambda_{3}<0$. The elements of $\mathcal{F}$ are classified by their decorated trees. These trees are rooted trees where the root is the vertex of weight~$t$. Among the $s+t-1$ decorated trees corresponding to elements of $\mathcal{F}$, there are~$t$ of them such that  the vertices of weight $1$ and $s$ are directly connected to the root. The $s-1$ remaining decorated trees are such that the vertex connected to the root has weight~$s$ while the vertex of weight $1$ is connected to one of the $s-1$ allowed corners of the latter vertex.\newline

The $t$ trees where both vertices are connected to the root are cyclically permuted by the monodromy action of~$\gamma_{1}$. Moreover $\gamma_{1}$ fixes the other elements. The action of~$\gamma_{3}$ moves cyclically the vertex of weight $1$ around the rest of the tree. Hence this action form a cycle containing all the decorated trees. Consequently, the monodromy action of the loops around $A_{1}$ and $A_{3}$ form a pair of elements of $\mathfrak{S}(\mathcal{F})$ conjugated to the permutations $(1~2~\dots~t)$ and $(1~2~\dots~s+t-1)$. Now it suffices to use point (i) of Lemma~\ref{lem:piccard} to conclude the proof.
\end{proof}

Generalizing Proposition~\ref{prop:monosimple}, we are able to compute the monodromy group of the isoresidual cover for any stratum with three poles. In the following proposition, we state all the cases but one which will be treated in Proposition~\ref{prop:last}.

\begin{prop}\label{prop:monocomp}
For any stratum $\mathcal{H}(a,-b_{1},-b_{2},-b_{3})$ such that $b_{1},b_{2},b_{3} \geq 2$ which is different from $\mathcal{H}(6,-2,-3,-3)$, the monodromy group of the isoresidual cover is isomorphic to the alternating group $\mathfrak{A}_{a}$ if $b_{1},b_{2},b_{3}$ have the same parity and symmetric group $\mathfrak{S}_{a}$ otherwise.
\end{prop}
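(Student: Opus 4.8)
The plan is to follow the strategy of Proposition~\ref{prop:monosimple}, using Lemma~\ref{lem:piccard}(ii) in place of Lemma~\ref{lem:piccard}(i). Since $p=3$, Theorem~\ref{thm:numerofibre} gives that a generic fiber $\mathcal{F}$ has $\tfrac{a!}{(a-1)!}=a$ elements. The resonance arrangement $\mathcal{A}_{3}$ has the three hyperplanes $A_{1},A_{2},A_{3}$, and for each $r$ the set $\{r\}^{\complement}$ has two elements; hence Proposition~\ref{prop:monodromieloop}, applied with $I=\{r\}^{\complement}$, shows that $\gamma_{r}$ acts on $\mathcal{F}$ as a single cycle of length $d_{\{r\}^{\complement}}=a+1-b_{r}$, fixing the other $b_{r}-1$ elements. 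As $2\le b_{r}\le a-2$ (the other two orders being at least~$2$), each of these cycle lengths lies in $\{3,\dots,a-1\}$. By Corollary~\ref{cor:alternate}(i) the monodromy group is contained in $\mathfrak{A}(\mathcal{F})$ exactly when $b_{1},b_{2},b_{3}$ have the same parity, so it only remains to prove the reverse inclusions: that $\langle\gamma_{1},\gamma_{2},\gamma_{3}\rangle$ is $\mathfrak{A}_{a}$ when the three orders share a parity, and $\mathfrak{S}_{a}$ otherwise.

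Next I would realise $\mathcal{F}$ over the standard chamber, rooted at one vertex, and use the explicit model of decorated trees. For $p=3$ these are the oriented paths on three labelled vertices carrying the prescribed numbers of half-edges; as in the enumerations in the proofs of Proposition~\ref{prop:distributiondeg} and Proposition~\ref{prop:monosimple}, the support of $\gamma_{r}$ is the set of trees in which vertex $r$ is a leaf---equivalently, those admitting an edge separating $\{r\}$ from its complement---and on that support $\gamma_{r}$ moves the attaching corner of this edge to the next admissible corner in clockwise order. Fix two distinct indices $i,j$, and let $k$ be the third. A three-vertex path has a unique interior vertex, so no tree can be fixed by both $\gamma_{i}$ and $\gamma_{j}$; hence $\supp\gamma_{i}\cup\supp\gamma_{j}=\mathcal{F}$, and $|\supp\gamma_{i}\cap\supp\gamma_{j}|=(a+1-b_{i})+(a+1-b_{j})-a=b_{k}$.

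The key step---and the one I expect to be the main obstacle---is to show that, after a suitable bijection of $\mathcal{F}$ with $\{1,\dots,a\}$, the cycles $\gamma_{i}$ and $\gamma_{j}$ become $(1~2~\dots~a+1-b_{i})$ and $(b_{j}~b_{j}+1~\dots~a)$; that is, the $b_{k}$ trees common to the two supports form a single block that is consecutive, and compatibly oriented, for the cyclic order of each of the two generators. Proving this means following the clockwise motion of the leaf-attachments and checking that the common trees are swept out consecutively by each $\gamma$; it is the natural analogue of the explicit picture used in the two-simple-pole case of Proposition~\ref{prop:monosimple}, but is more delicate because here neither generator is the full $a$-cycle. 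Granting it, one has $1<b_{j}\le a+1-b_{i}<a$ (from $b_{j}\ge 2$, $b_{i}+b_{j}\le a+1$ and $b_{i}\ge 2$), so Lemma~\ref{lem:piccard}(ii) applies: $\langle\gamma_{i},\gamma_{j}\rangle$ is $\mathfrak{A}_{a}$ if both cycles are even permutations and $\mathfrak{S}_{a}$ if at least one is odd, apart from the three sporadic exceptions of that lemma.

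It remains to match parities and dispose of the exceptions. A cycle of length $a+1-b_{r}$ is an even permutation iff $a\equiv b_{r}\pmod 2$, and $a=b_{1}+b_{2}+b_{3}-2\equiv b_{1}+b_{2}+b_{3}\pmod 2$. If the three orders share a parity then $a\equiv b_{r}$ for every $r$, all $\gamma_{r}$ are even permutations, and any admissible pair generates $\mathfrak{A}_{a}$. If not, let $r_{0}$ be the index whose order has the opposite parity to the other two; then the two generators $\gamma_{i}$ with $i\in\{1,2,3\}\setminus\{r_{0}\}$ are odd permutations, so that pair generates $\mathfrak{S}_{a}$. Finally, each sporadic pair of Lemma~\ref{lem:piccard}(ii) has $n=6$, hence $a=6$, and the only strata with $a=6$ and all orders $\ge2$ are, up to permuting the poles, $\mathcal{H}(6,-2,-2,-4)$ and $\mathcal{H}(6,-2,-3,-3)$; a direct computation of the three cycle lengths shows that for $\mathcal{H}(6,-2,-2,-4)$ the pair of $5$-cycles is conjugate to $(1~2~3~4~5),(2~3~4~5~6)$, which is not exceptional, whereas for $\mathcal{H}(6,-2,-3,-3)$ every pair of generators is conjugate to one of the three exceptional pairs---precisely why that stratum is excluded here and treated separately in Proposition~\ref{prop:last}. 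In all other cases $a\ne6$, so no sporadic pair can occur, and Lemma~\ref{lem:piccard}(ii) gives the claim. This proves the proposition.
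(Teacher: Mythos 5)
Your proposal is essentially the paper's own proof: same fiber count, same cycle structure of the generators via Proposition~\ref{prop:monodromieloop}, the same reduction to a pair of overlapping cycles fed into Lemma~\ref{lem:piccard}(ii), the same parity bookkeeping (which the paper delegates to Corollary~\ref{cor:alternate}), and the same treatment of the sporadic pairs, including the check that $\mathcal{H}(6,-2,-2,-4)$ is harmless while every pair for $\mathcal{H}(6,-2,-3,-3)$ is exceptional. The one step you state but ``grant''---that the $b_{k}$ trees in $\supp\gamma_{i}\cap\supp\gamma_{j}$ form a consecutive block for each of the two cycles---is precisely the sentence the paper asserts (``the intersection of the two cycles is formed by $a+2-b_{i}-b_{j}$ consecutive elements for each of the two cycles''), and it closes quickly from the $p=3$ tree model: over the chamber with $\lambda_{k}>0$ and $\lambda_{i},\lambda_{j}<0$ (adjacent to $A_{i}$ and $A_{j}$), the trees moved by both loops are exactly those in which $i$ and $j$ are both leaves attached to the central vertex $k$, parametrised by the (even) number of half-edges of $k$ lying between the two edges; by the description in the proof of Proposition~\ref{prop:monodromieloop}, $\gamma_{i}$ moves the leaf $i$ clockwise through the allowed corners of the two-vertex subtree on $\{j,k\}$, and since $k$ carries a single edge of that subtree, its allowed corners form one arc of the boundary walk, so these $b_{k}$ trees are swept consecutively (the parameter changes by $2$ at each step), and likewise for $\gamma_{j}$. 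Your worry about compatible orientation is moot: even if the two cycles traverse the common block in opposite orders, replacing $\gamma_{j}$ by $\gamma_{j}^{-1}$ does not change $\langle\gamma_{i},\gamma_{j}\rangle$, so Lemma~\ref{lem:piccard}(ii) applies exactly as in the paper. With that step filled in, your argument is complete and coincides with the paper's.
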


\begin{proof}
For these strata, the generic fibers of the isoresidual fibration have~$a$ elements. According to Proposition~\ref{prop:monodromieloop}, the loop $\gamma_{i}$ acts as a cycle of order $a+1-b_{i}$. In particular each of these cycles is of length at least three. Two of these cycles act transitively on the fiber. Indeed, consider a fiber in the chamber bounded by $A_{i}$ and $A_{j}$. Then one of the two edges of any decorated tree in the fiber separate either the vertex $i$ or the vertex $j$ from the rest of the tree. The intersection of two of these cycles is formed by $a+2-b_{i}-b_{j} \geq 2$ consecutive elements for each of the two cycles. Therefore this pair of permutations is conjugate to the pair of permutations $\alpha=(1~2~\dots~s)$ and $\beta=(t~t+1~\dots~a)$ with $s=a+1-b_{i}$ and $t=b_{j}$  in $\mathfrak{S}_{a}$. In particular, we have $s\geq t+1$, $s \geq 3$ and $a-t\geq 2$.\newline
Now the point (ii) of Lemma~\ref{lem:piccard} implies that such pairs generate~$\mathfrak{A}_{a}$ with the three exceptions where
$\alpha=(1~2~3~4)$ and $\beta=(3~4~5~6)$, or $\alpha=(1~2~3~4)$ and $\beta=(2~3~4~5~6)$, or $\alpha=(1~2~3~4~5)$ and $\beta=(3~4~5~6)$. The only stratum with $p=3$ such that any pair of generators is one of these exceptions is $\mathcal{H}(6,-2,-3,-3)$. 
\end{proof}

Finally, we treat the case of the stratum $\mathcal{H}(6,-2,-3,-3)$. Recall that the action of $\mathfrak{S}_{5}$ by conjugation on its $5$-Sylow gives an embedding of $\mathfrak{S}_{5}$ into $\mathfrak{S}_{6}$, that is called the {\em exotic embedding}.

\begin{prop}\label{prop:last}
The monodromy group of the isoresidual cover of $\mathcal{H}(6,-2,-3,-3)$ is isomorphic to $\mathfrak{S}_{5}$ and is embedded into the automorphism group of any generic fiber as the exotic embedding of $\mathfrak{S}_{5}$ into $\mathfrak{S}_{6}$.
\end{prop}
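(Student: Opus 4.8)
The plan is to realize $G:=\mathcal{M}_{\mathcal{H}}$ as a subgroup of $\mathfrak{S}(\mathcal{F})\cong\mathfrak{S}_6$ (by Theorem~\ref{thm:numerofibre} the generic fiber has $6!/5!=6$ elements, and $G$ is transitive since the genus-zero strata are connected) and then identify it explicitly. For $p=3$ every resonance hyperplane is one of $A_1,A_2,A_3$, so $G=\langle\gamma_1,\gamma_2,\gamma_3\rangle$. Applying Proposition~\ref{prop:monodromieloop} with $I^{\complement}=\{i\}$ (so $d_I=a+1-b_i$), the loop $\gamma_1$ fixes $6-5!/4!=1$ element and acts as a single $5$-cycle, while each of $\gamma_2,\gamma_3$ fixes $6-4!/3!=2$ elements and acts as a single $4$-cycle; moreover, arguing as in the proof of Proposition~\ref{prop:monocomp} over the real fiber of a chamber incident to $A_1$ and $A_2$, the supports of $\gamma_1$ and $\gamma_2$ (resp. $\gamma_1$ and $\gamma_3$; resp. $\gamma_2$ and $\gamma_3$) meet in $a+2-b_1-b_2=3$ (resp. $3$; resp. $2$) points that are consecutive in each of the two cycles.

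Next I would record the relation $\gamma_1\gamma_2\gamma_3=1$ in $G$. The space $\mathcal{R}_3\setminus\mathcal{A}_3$ is the complement of three concurrent lines in $\mathcal{R}_3\cong\mathbb{C}^2$; quotienting by the scaling $\mathbb{C}^{\ast}$-action identifies it with $(\mathbb{P}^1\setminus\{3\ \mathrm{points}\})\times\mathbb{C}^{\ast}$, so $W_3\cong F_2\times\mathbb{Z}$ with the central $\mathbb{Z}$ generated by $\gamma_1\gamma_2\gamma_3$ (the three meridians have trivial product on the base $\mathbb{P}^1\setminus\{3\ \mathrm{points}\}$). Hence $\gamma_1\gamma_2\gamma_3$ is central in $G$. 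Now a transitive subgroup of $\mathfrak{S}_6$ containing a $5$-cycle is primitive: a non-trivial block system would have $2$ or $3$ blocks, and a $5$-cycle fixes each such block setwise and acts trivially inside it, hence is trivial. Therefore $G$ is one of the four primitive subgroups $\mathrm{PSL}_2(5)\cong\mathfrak{A}_5$, $\mathrm{PGL}_2(5)\cong\mathfrak{S}_5$, $\mathfrak{A}_6$, $\mathfrak{S}_6$ of $\mathfrak{S}_6$, all of which are centerless; so $\gamma_1\gamma_2\gamma_3=1$, whence $\gamma_3=(\gamma_1\gamma_2)^{-1}$ and $G=\langle\gamma_1,\gamma_2\rangle$. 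Since $G$ contains the $4$-cycle $\gamma_2$, which is odd, $G\not\subseteq\mathfrak{A}_6$, leaving only $G=\mathrm{PGL}_2(5)$ or $G=\mathfrak{S}_6$.

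It remains to pin down the pair $(\gamma_1,\gamma_2)$. After conjugating, take $\gamma_1=(1\,2\,3\,4\,5)$, fixing $6$; using the residual freedom $\langle\gamma_1\rangle$ in the centralizer we may assume $\mathrm{supp}(\gamma_2)=\{1,2,3,6\}$, and inspecting the six $4$-cycles on $\{1,2,3,6\}$ one finds that exactly one of them, $\gamma_2=(1\,6\,3\,2)$, makes $\gamma_1\gamma_2$ (hence $\gamma_3$) a $4$-cycle; so $(\gamma_1,\gamma_2)$ is conjugate in $\mathfrak{S}_6$ to $\big((1\,2\,3\,4\,5),(1\,6\,3\,2)\big)$ — alternatively this can be read off directly from the monodromy computation on the six decorated trees via the ``move the leaf/edge to the next admissible corner'' recipe of Propositions~\ref{prop:monodromieloop} and~\ref{prop:monodromieloopbis}. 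Identifying $\{1,\dots,6\}$ with $\mathbb{P}^1(\mathbb{F}_5)$ by $k\mapsto k$ for $k\le 4$, $5\mapsto 0$, $6\mapsto\infty$, we get $\gamma_1=[x\mapsto x+1]$ and $\gamma_2=\big[x\mapsto \tfrac{x}{2x+3}\big]$, both in $\mathrm{PGL}_2(5)$, so $G\subseteq\mathrm{PGL}_2(5)$. As $G$ contains elements of orders $4$ and $5$ and acts transitively on $6$ points, $60\mid|G|$; with $G\not\subseteq\mathfrak{A}_6$ and $G\subseteq\mathrm{PGL}_2(5)$ this forces $|G|=120$, i.e. $G=\mathrm{PGL}_2(5)$. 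Finally $\mathrm{PGL}_2(5)$ has a single conjugacy class of index-$6$ subgroups (each has order $20$ and is the normalizer of a Sylow $5$-subgroup), so its faithful transitive action on $\mathcal{F}$ is the exotic embedding $\mathfrak{S}_5\cong\mathrm{PGL}_2(5)\hookrightarrow\mathfrak{S}_6$. I expect the last step to be the only real obstacle: the cycle types, the support overlaps and the relation $\gamma_1\gamma_2\gamma_3=1$ alone do not separate $\mathrm{PGL}_2(5)$ from $\mathfrak{S}_6$, so excluding $G=\mathfrak{S}_6$ genuinely requires exhibiting that the explicit generators land in a common $\mathrm{PGL}_2(5)$, which is what the identification with $\mathbb{P}^1(\mathbb{F}_5)$ provides.
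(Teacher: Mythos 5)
Your route is genuinely different from the paper's. The paper lists the six decorated trees over a chamber $\mathcal{C}$, reads off $\gamma_{2}=(A\,B\,C\,D)$ and $\gamma_{1}^{-1}=(B\,C\,D\,E\,F)$ directly, obtains $\gamma_{3}$ by transporting the fiber across the wall $A_{1}$ along an explicit path $\sigma_{1}$, and then checks that the three resulting permutations generate a transitive group of order $120$. You instead work only with conjugation-invariant data (cycle types from Proposition~\ref{prop:monodromieloop}, transitivity, hence primitivity and centerlessness of the image) plus the structure of $W_{3}\cong PB_{3}\cong F_{2}\times\mathbb{Z}$, use the central element to eliminate $\gamma_{3}$, classify the possible pairs (a $5$-cycle and a $4$-cycle whose product is a $4$-cycle), and recognize the generators inside $\mathrm{PGL}_{2}(\mathbb{F}_{5})$ acting on $\mathbb{P}^{1}(\mathbb{F}_{5})$. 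This avoids the wall-crossing computation of $\gamma_{3}$ altogether and gives a conceptual reason for the answer; the paper's proof needs no input about $W_{3}$ beyond its meridian generators. Your intermediate claims check out: with $a=6$, $b_{1}=2$, $b_{2}=b_{3}=3$ the cycle types are as you state, the primitive subgroups of $\mathfrak{S}_{6}$ are exactly the four you list and are all centerless, among the $4$-cycles supported on $\{1,2,3,6\}$ only $(1\,6\,3\,2)$ gives a $4$-cycle product with $(1\,2\,3\,4\,5)$, and your M\"obius identifications $x\mapsto x+1$, $x\mapsto x/(2x+3)$ are correct, so the endgame ($60\mid |G|$, $G\not\subseteq\mathfrak{A}_{6}$, $G\subseteq\mathrm{PGL}_{2}(\mathbb{F}_{5})$, hence $G\cong\mathfrak{S}_{5}$ in its exotic degree-$6$ action) is sound.

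The one step that is not justified as written is the relation $\gamma_{1}\gamma_{2}\gamma_{3}=1$ \emph{for the same representatives} for which you use the support-overlap normalization. The product of the three meridians is central only for a suitably ordered standard (Zariski--van Kampen) generating triple, not for arbitrary meridian choices: in $F_{2}\times\mathbb{Z}$ with $F_{2}=\langle a,b\rangle$, the product of the conjugates $a$ and $(ba)b(ba)^{-1}$ is $abab\,a^{-1}b^{-1}$, which is cyclically reduced of length six and hence not conjugate to $ab$, so it is not (central)$\cdot$(meridian of the third line)$^{-1}$. Your overlap data (three consecutive points of the $5$-cycle in $\operatorname{supp}\gamma_{2}$) is established for the specific small loops based in a chamber adjacent to both $A_{1}$ and $A_{2}$, while the product relation holds for a possibly different triple; you combine the two without reconciling the choices. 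The gap is easily repaired, in either of two ways: (i) check that the geometric loops used for the overlap computation can be completed to a standard generating triple (this is essentially what the paper's path $\sigma_{1}$ does implicitly); or (ii) drop the overlap input entirely: transitivity already forces $6\in\operatorname{supp}\gamma_{2}$, the only other case up to the centralizer $\langle\gamma_{1}\rangle$ is a non-consecutive overlap, say $\operatorname{supp}\gamma_{2}=\{1,2,4,6\}$, and a direct check of the six $4$-cycles on $\{1,2,4,6\}$ shows none of them yields a $4$-cycle product, so cycle types, transitivity and the central relation for a standard triple alone pin the pair down to $\bigl((1\,2\,3\,4\,5),(1\,6\,3\,2)\bigr)$ up to conjugacy. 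With either repair your argument is a complete and valid alternative proof.
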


\begin{proof}
According to Theorem~\ref{thm:numerofibre}, the generic fiber of the isoresidual cover of the stratum $\mathcal{H}(6,-2,-3,-3)$ has $6$ elements. In the chamber $\mathcal{C}$ where $\lambda_{3}>0$ while $\lambda_{1},\lambda_{2}<0$, these elements are classified by decorated trees where the vertex labeled by $3$ is the root. We denote by $A$ the unique tree such that vertex $2$ is glued on vertex $1$ which is glued on vertex $3$. Then, we denote by $B$, $C$ and $D$ the three trees where the vertices $1$ and $2$ are directly glued on the root, where the number of half-edges on vertex $3$ between the vertices~$1$ and $2$  is respectively $0$, $2$ and $4$ in the clockwise order.  The last two trees are such that vertex~$1$ is glued on vertex $2$ which is glued on vertex $3$. We denote these trees by $E$ or $F$ depending on if the number of half edges at vertex $2$ between vertices $1$ and $3$ is $1$ or $3$ in the clockwise order. These decorated trees are picture in Figure~\ref{fig:dectreepreuve}.\newline
\begin{figure}[ht]
\begin{tikzpicture}[scale=1,decoration={
    markings,
    mark=at position 0.5 with {\arrow[very thick]{>}}}]
   
   \node at (-3,0) {$A$:};
\node[circle,draw] (a) at  (-2,0) {$p_{2}$};
\node[circle,draw] (b) at  (0,0) {$p_{1}$};
\node[circle,draw] (c) at  (2,0) {$p_{3}$};
\draw [postaction={decorate}]  (a) --  (b);
\draw[postaction={decorate}] (b) -- (c);

\draw (a) -- ++(120:.6);
\draw (a) -- ++(240:.6);
\draw (a) -- ++(60:.6);
\draw (a) -- ++(-60:.6);
\draw (c) -- ++(60:.6);
\draw(c) -- ++(-60:.6);
\draw (c) -- ++(120:.6);
\draw (c) -- ++(240:.6);
\draw(b) -- ++(-90:.6);
\draw (b) -- ++(90:.6);

\begin{scope}[xshift=7cm]
  \node at (-3,0) {$B$:};
\node[circle,draw] (a) at  (-2,0) {$p_{1}$};
\node[circle,draw] (b) at  (0,0) {$p_{3}$};
\node[circle,draw] (c) at  (2,0) {$p_{2}$};
\draw [postaction={decorate}]  (a) --  (b);
\draw[postaction={decorate}] (c) -- (b);

 \draw (a) -- ++(120:.6);
\draw (a) -- ++(240:.6);
\draw (c) -- ++(60:.6);
\draw(c) -- ++(-60:.6);
\draw (c) -- ++(120:.6);
\draw (c) -- ++(240:.6);
\draw(b) -- ++(40:.6);
\draw (b) -- ++(140:.6);
\draw (b) -- ++(70:.6);
\draw (b) -- ++(110:.6);
\end{scope}

\begin{scope}[yshift=-2cm]
  \node at (-3,0) {$C$:};
\node[circle,draw] (a) at  (-2,0) {$p_{1}$};
\node[circle,draw] (b) at  (0,0) {$p_{3}$};
\node[circle,draw] (c) at  (2,0) {$p_{2}$};
\draw [postaction={decorate}]  (a) --  (b);
\draw[postaction={decorate}] (c) -- (b);

 \draw (a) -- ++(120:.6);
\draw (a) -- ++(240:.6);
\draw (c) -- ++(60:.6);
\draw(c) -- ++(-60:.6);
\draw (c) -- ++(120:.6);
\draw (c) -- ++(240:.6);
\draw(b) -- ++(-70:.6);
\draw (b) -- ++(110:.6);
\draw (b) -- ++(70:.6);
\draw (b) -- ++(-110:.6);

\begin{scope}[xshift=7cm]
  \node at (-3,0) {$D$:};
\node[circle,draw] (a) at  (-2,0) {$p_{1}$};
\node[circle,draw] (b) at  (0,0) {$p_{3}$};
\node[circle,draw] (c) at  (2,0) {$p_{2}$};
\draw [postaction={decorate}]  (a) --  (b);
\draw[postaction={decorate}] (c) -- (b);

 \draw (a) -- ++(120:.6);
\draw (a) -- ++(240:.6);
\draw (c) -- ++(60:.6);
\draw(c) -- ++(-60:.6);
\draw (c) -- ++(120:.6);
\draw (c) -- ++(240:.6);
\draw(b) -- ++(-40:.6);
\draw (b) -- ++(-140:.6);
\draw (b) -- ++(-70:.6);
\draw (b) -- ++(-110:.6);
\end{scope}
\end{scope}

\begin{scope}[yshift=-4cm]
  \node at (-3,0) {$E$:};
\node[circle,draw] (a) at  (-2,0) {$p_{1}$};
\node[circle,draw] (b) at  (0,0) {$p_{2}$};
\node[circle,draw] (c) at  (2,0) {$p_{3}$};
\draw [postaction={decorate}]  (a) --  (b);
\draw[postaction={decorate}] (b) -- (c);

 \draw (a) -- ++(120:.6);
\draw (a) -- ++(240:.6);
\draw (c) -- ++(60:.6);
\draw(c) -- ++(-60:.6);
\draw (c) -- ++(120:.6);
\draw (c) -- ++(240:.6);
\draw(b) -- ++(-90:.6);
\draw (b) -- ++(45:.6);
\draw (b) -- ++(90:.6);
\draw (b) -- ++(135:.6);

\begin{scope}[xshift=7cm]
  \node at (-3,0) {$F$:};
\node[circle,draw] (a) at  (-2,0) {$p_{1}$};
\node[circle,draw] (b) at  (0,0) {$p_{2}$};
\node[circle,draw] (c) at  (2,0) {$p_{3}$};
\draw [postaction={decorate}]  (a) --  (b);
\draw[postaction={decorate}] (b) -- (c);

 \draw (a) -- ++(120:.6);
\draw (a) -- ++(240:.6);
\draw (c) -- ++(60:.6);
\draw(c) -- ++(-60:.6);
\draw (c) -- ++(120:.6);
\draw (c) -- ++(240:.6);
\draw(b) -- ++(90:.6);
\draw (b) -- ++(-45:.6);
\draw (b) -- ++(-90:.6);
\draw (b) -- ++(-135:.6);
\end{scope}
\end{scope}
 \end{tikzpicture}
 \caption{The decorated trees corresponding to the $1$-forms in $\mathcal{H}(6,-2,-3,-3)$ above the chamber $\mathcal{C}$.} \label{fig:dectreepreuve}
\end{figure}
From this description, it is clear that the monodromy action of $\gamma_{2}$ is the cycle $(A~B~C~D)$ and the 
one of $\gamma_{1}^{-1}$ is $(B~C~D~E~F)$.\newline

We also consider chamber $\mathcal{C'}$ where $\lambda_{1},\lambda_{3}>0$ and $\lambda_{2}<0$. The chambers $\mathcal{C}$ and $\mathcal{C'}$ are separated by the resonance hyperplane $A_{1}$. In order to  compute the monodromy action of $\gamma_{3}$ on $\{A,B,C,D,E,F\}$, we identify the elements of fibers of these two chambers in the following way. The decorated trees associated to the $1$-forms above $\mathcal{C}'$ are pictured in Figure~\ref{fig:dectreepreuve2}.  \begin{figure}[ht]
\begin{tikzpicture}[scale=1,decoration={
    markings,
    mark=at position 0.5 with {\arrow[very thick]{>}}}]
   
   \node at (-3,0) {$A'$:};
\node[circle,draw] (a) at  (-2,0) {$p_{2}$};
\node[circle,draw] (b) at  (0,0) {$p_{1}$};
\node[circle,draw] (c) at  (2,0) {$p_{3}$};
\draw [postaction={decorate}]  (a) --  (b);
\draw[postaction={decorate}] (b) -- (c);

\draw (a) -- ++(120:.6);
\draw (a) -- ++(240:.6);
\draw (a) -- ++(60:.6);
\draw (a) -- ++(-60:.6);
\draw (c) -- ++(60:.6);
\draw(c) -- ++(-60:.6);
\draw (c) -- ++(120:.6);
\draw (c) -- ++(240:.6);
\draw(b) -- ++(-90:.6);
\draw (b) -- ++(90:.6);

\begin{scope}[xshift=7cm]
  \node at (-3,0) {$B'$:};
\node[circle,draw] (a) at  (-2,0) {$p_{1}$};
\node[circle,draw] (b) at  (0,0) {$p_{2}$};
\node[circle,draw] (c) at  (2,0) {$p_{3}$};
\draw [postaction={decorate}]  (b) --  (a);
\draw[postaction={decorate}] (b) -- (c);

 \draw (a) -- ++(120:.6);
\draw (a) -- ++(240:.6);
\draw (c) -- ++(60:.6);
\draw(c) -- ++(-60:.6);
\draw (c) -- ++(120:.6);
\draw (c) -- ++(240:.6);
\draw(b) -- ++(40:.6);
\draw (b) -- ++(140:.6);
\draw (b) -- ++(70:.6);
\draw (b) -- ++(110:.6);
\end{scope}

\begin{scope}[yshift=-2cm]
  \node at (-3,0) {$C'$:};
\node[circle,draw] (a) at  (-2,0) {$p_{1}$};
\node[circle,draw] (b) at  (0,0) {$p_{2}$};
\node[circle,draw] (c) at  (2,0) {$p_{3}$};
\draw [postaction={decorate}]  (b) --  (a);
\draw[postaction={decorate}] (b) -- (c);

 \draw (a) -- ++(120:.6);
\draw (a) -- ++(240:.6);
\draw (c) -- ++(60:.6);
\draw(c) -- ++(-60:.6);
\draw (c) -- ++(120:.6);
\draw (c) -- ++(240:.6);
\draw(b) -- ++(-70:.6);
\draw (b) -- ++(110:.6);
\draw (b) -- ++(70:.6);
\draw (b) -- ++(-110:.6);

\begin{scope}[xshift=7cm]
  \node at (-3,0) {$D'$:};
\node[circle,draw] (a) at  (-2,0) {$p_{1}$};
\node[circle,draw] (b) at  (0,0) {$p_{2}$};
\node[circle,draw] (c) at  (2,0) {$p_{3}$};
\draw [postaction={decorate}]  (b) --  (a);
\draw[postaction={decorate}] (b) -- (c);

 \draw (a) -- ++(120:.6);
\draw (a) -- ++(240:.6);
\draw (c) -- ++(60:.6);
\draw(c) -- ++(-60:.6);
\draw (c) -- ++(120:.6);
\draw (c) -- ++(240:.6);
\draw(b) -- ++(-40:.6);
\draw (b) -- ++(-140:.6);
\draw (b) -- ++(-70:.6);
\draw (b) -- ++(-110:.6);
\end{scope}
\end{scope}

\begin{scope}[yshift=-4cm]
  \node at (-3,0) {$E'$:};
\node[circle,draw] (a) at  (-2,0) {$p_{1}$};
\node[circle,draw] (b) at  (0,0) {$p_{3}$};
\node[circle,draw] (c) at  (2,0) {$p_{2}$};
\draw [postaction={decorate}]  (b) --  (a);
\draw[postaction={decorate}] (c) -- (b);

 \draw (a) -- ++(120:.6);
\draw (a) -- ++(240:.6);
\draw (c) -- ++(60:.6);
\draw(c) -- ++(-60:.6);
\draw (c) -- ++(120:.6);
\draw (c) -- ++(240:.6);
\draw(b) -- ++(-90:.6);
\draw (b) -- ++(45:.6);
\draw (b) -- ++(90:.6);
\draw (b) -- ++(135:.6);

\begin{scope}[xshift=7cm]
  \node at (-3,0) {$F'$:};
\node[circle,draw] (a) at  (-2,0) {$p_{1}$};
\node[circle,draw] (b) at  (0,0) {$p_{3}$};
\node[circle,draw] (c) at  (2,0) {$p_{2}$};
\draw [postaction={decorate}]  (b) --  (a);
\draw[postaction={decorate}] (c) -- (b);

 \draw (a) -- ++(120:.6);
\draw (a) -- ++(240:.6);
\draw (c) -- ++(60:.6);
\draw(c) -- ++(-60:.6);
\draw (c) -- ++(120:.6);
\draw (c) -- ++(240:.6);
\draw(b) -- ++(90:.6);
\draw (b) -- ++(-45:.6);
\draw (b) -- ++(-90:.6);
\draw (b) -- ++(-135:.6);
\end{scope}
\end{scope}
 \end{tikzpicture}
 \caption{The decorated trees corresponding to the $1$-forms in $\mathcal{H}(6,-2,-3,-3)$ above the chamber $\mathcal{C}'$.} \label{fig:dectreepreuve2}
\end{figure}  We consider the path $\sigma_{1}$ going from $\mathcal{C}$ to $\mathcal{C}'$ and which coincide with $\gamma_{1}$ where there are both define. 
The action of $\sigma_{1}$ on the decorated trees is given by the following permutation
\[ \left(
  \begin{array}{ c c c c c c c}
     A & B & C & D & E & F \\
    A' & D' & E' & F' & B' & C'
  \end{array} \right).
\]
The monodromy action of $\gamma_{3}$ on the fiber above $\mathcal{C}$ is then computed using the path $\gamma_{3}'=\sigma_{1}\gamma_{3}\sigma_{1}^{-1}$.\newline
   The monodromy of $\gamma_{3}$ on the fiber above $\mathcal{C}'$ is the cycle $(A'~B'~C'~D')$. Hence the monodromy of $\gamma_{3}$ on the fiber above $\mathcal{C}$ is the cycle $(A~E~F~B)$.\newline 
% The intersection of the supports of the cycles of $\gamma_{1}$ and $\gamma_{3}$ is a set of three consecutive elements  for each of the two cycles $E,F,B$. The action of the two cycles on this intersection are inverse of each other.\newline
% The intersection of the two cycles of $\gamma_{2}$ and $\gamma_{3}$ is formed by two elements $A,B$, also consecutive for the two cycles. Therefore the only permutations that could correspond to $\gamma_{3}$ are $(A~D~E~F)$ and $(A~E~F~B)$. Since these two permutations are conjugate by $(B~C~D~E~F)$, it is irrelevant to know which one it is in order to compute the monodromy group.\newline

The group generated by the permutations  $(A~B~C~D)$, $(B~C~D~E~F)$ and $(A~E~F~B)$ has order $120$ and acts transitively on the set of six elements. This the exotic embedding of $\mathfrak{S}_{5}$ into~$\mathfrak{S}_{6}$.
\end{proof}

Finally note that Theorem~\ref{thm:monothree} is proved by Corollary~\ref{cor:semidirect}, Propositions~\ref{prop:monosimple},~\ref{prop:monocomp} and~\ref{prop:last} . Together they give a complete description of the monodromy groups of the isoresidual cover for strata with three poles. Conjecturally, we expect that for $p \geq 4$, the monodromy group is as big as possible under the constraints of Corollaries~\ref{cor:alternate} and~\ref{cor:semidirect}.\newline

\paragraph{\bf Acknowledgements.} The first author is supported by the  project CONACyT A1-S-9029 "Moduli de curvas y Curvatura en $A_{g}$" of Abel Castorena.  The second named author is grateful to Ben-Michael Kohli, Carlos Matheus, Dmitry Novikov, Boris Shapiro for their valuable remarks.
The second author is supported by the Israel Science Foundation (grant No. 1167/17) and the European Research Council (ERC) under the European Union Horizon 2020 research and innovation programme (grant agreement No. 802107).\newline

\paragraph{\bf Conflicts of interest.} On behalf of all authors, the corresponding author states that there is no conflict of interest.

\nopagebreak
\vskip.5cm
\end{document}